\newtheorem{theorem}{Theorem}[section]
\newtheorem{lemma}[theorem]{Lemma}
\newtheorem{proposition}[theorem]{Proposition}
\newtheorem{corollary}[theorem]{Corollary}
\theoremstyle{definition}
\newenvironment{remark}
  {\pushQED{\qed}\remx}
  {\popQED\endremx}
\newsavebox\myboxA
\newsavebox\myboxB
\newlength\mylenA
\newcommand*\yoverline[2][0.75]{%
    \sbox{\myboxA}{$\m@th#2$}%
    \setbox\myboxB\null
    \ht\myboxB=\ht\myboxA%
    \dp\myboxB=\dp\myboxA%
    \wd\myboxB=#1\wd\myboxA
    \sbox\myboxB{$\m@th\overline{\copy\myboxB}$}
    \setlength\mylenA{\the\wd\myboxA}
    \addtolength\mylenA{-\the\wd\myboxB}%
    \ifdim\wd\myboxB<\wd\myboxA%
       \rlap{\hskip 0.5\mylenA\usebox\myboxB}{\usebox\myboxA}%
    \else
        \hskip -0.5\mylenA\rlap{\usebox\myboxA}{\hskip 0.5\mylenA\usebox\myboxB}%
    \fi}
\numberwithin{equation}{section}
\begin{document}




\allowdisplaybreaks

\newcommand{\diver}{\operatorname{div}}
\newcommand{\lin}{\operatorname{Lin}}
\newcommand{\curl}{\operatorname{curl}}
\newcommand{\ran}{\operatorname{Ran}}
\newcommand{\kernel}{\operatorname{Ker}}
\newcommand{\la}{\langle}
\newcommand{\ra}{\rangle}
\newcommand{\N}{\mathbb{N}}
\newcommand{\R}{\mathbb{R}}
\newcommand{\C}{\mathbb{C}}
\newcommand{\T}{\mathbb{T}}

\newcommand{\ld}{\lambda}
\newcommand{\fai}{\varphi}
\newcommand{\0}{0}
\newcommand{\n}{\mathbf{n}}
\newcommand{\uu}{{\boldsymbol{\mathrm{u}}}}
\newcommand{\UU}{{\boldsymbol{\mathrm{U}}}}
\newcommand{\buu}{\bar{{\boldsymbol{\mathrm{u}}}}}
\newcommand{\ten}{\\[4pt]}
\newcommand{\six}{\\[-3pt]}
\newcommand{\nb}{\nonumber}
\newcommand{\hgamma}{H_{\Gamma}^1(\OO)}
\newcommand{\opert}{O_{\varepsilon,h}}
\newcommand{\barx}{\bar{x}}
\newcommand{\barf}{\bar{f}}
\newcommand{\hatf}{\hat{f}}
\newcommand{\xoneeps}{x_1^{\varepsilon}}
\newcommand{\xh}{x_h}
\newcommand{\scaled}{\nabla_{1,h}}
\newcommand{\scaledb}{\widehat{\nabla}_{1,\gamma}}
\newcommand{\vare}{\varepsilon}
\newcommand{\A}{{\bf{A}}}
\newcommand{\RR}{{\bf{R}}}
\newcommand{\B}{{\bf{B}}}
\newcommand{\CC}{{\bf{C}}}
\newcommand{\D}{{\bf{D}}}
\newcommand{\K}{{\bf{K}}}
\newcommand{\oo}{{\bf{o}}}
\newcommand{\id}{{\bf{Id}}}
\newcommand{\E}{\mathcal{E}}
\newcommand{\ii}{\mathcal{I}}
\newcommand{\sym}{\mathrm{sym}}
\newcommand{\lt}{\left}
\newcommand{\rt}{\right}
\newcommand{\ro}{{\bf{r}}}
\newcommand{\so}{{\bf{s}}}
\newcommand{\e}{{\bf{e}}}
\newcommand{\ww}{{\boldsymbol{\mathrm{w}}}}
\newcommand{\zz}{{\boldsymbol{\mathrm{z}}}}
\newcommand{\U}{{\boldsymbol{\mathrm{U}}}}
\newcommand{\G}{{\boldsymbol{\mathrm{G}}}}
\newcommand{\VV}{{\boldsymbol{\mathrm{V}}}}
\newcommand{\II}{{\boldsymbol{\mathrm{I}}}}
\newcommand{\ZZ}{{\boldsymbol{\mathrm{Z}}}}
\newcommand{\hKK}{{{\bf{K}}}}
\newcommand{\f}{{\bf{f}}}
\newcommand{\g}{{\bf{g}}}
\newcommand{\lkk}{{\bf{k}}}
\newcommand{\tkk}{{\tilde{\bf{k}}}}
\newcommand{\W}{{\boldsymbol{\mathrm{W}}}}
\newcommand{\Y}{{\boldsymbol{\mathrm{Y}}}}
\newcommand{\EE}{{\boldsymbol{\mathrm{E}}}}
\newcommand{\F}{{\bf{F}}}
\newcommand{\spacev}{\mathcal{V}}
\newcommand{\spacevg}{\mathcal{V}^{\gamma}(\Omega\times S)}
\newcommand{\spacevb}{\bar{\mathcal{V}}^{\gamma}(\Omega\times S)}
\newcommand{\spaces}{\mathcal{S}}
\newcommand{\spacesg}{\mathcal{S}^{\gamma}(\Omega\times S)}
\newcommand{\spacesb}{\bar{\mathcal{S}}^{\gamma}(\Omega\times S)}
\newcommand{\skews}{H^1_{\barx,\mathrm{skew}}}
\newcommand{\kk}{\mathcal{K}}
\newcommand{\OO}{O}
\newcommand{\bhe}{{\bf{B}}_{\vare,h}}
\newcommand{\pp}{{\mathbb{P}}}
\newcommand{\ff}{{\mathcal{F}}}
\newcommand{\mWk}{{\mathcal{W}}^{k,2}(\Omega)}
\newcommand{\mWa}{{\mathcal{W}}^{1,2}(\Omega)}
\newcommand{\mWb}{{\mathcal{W}}^{2,2}(\Omega)}
\newcommand{\twos}{\xrightharpoonup{2}}
\newcommand{\twoss}{\xrightarrow{2}}
\newcommand{\bw}{\bar{w}}
\newcommand{\br}{\bar{{\bf{r}}}}
\newcommand{\bz}{\bar{{\bf{z}}}}
\newcommand{\tw}{{W}}
\newcommand{\tr}{{{\bf{R}}}}
\newcommand{\tz}{{{\bf{Z}}}}
\newcommand{\lo}{{{\bf{o}}}}
\newcommand{\hoo}{H^1_{00}(0,L)}
\newcommand{\ho}{H^1_{0}(0,L)}
\newcommand{\hotwo}{H^1_{0}(0,L;\R^2)}
\newcommand{\hooo}{H^1_{00}(0,L;\R^2)}
\newcommand{\hhooo}{H^1_{00}(0,1;\R^2)}
\newcommand{\dsp}{d_{S}^{\bot}(\barx)}
\newcommand{\LB}{{\bf{\Lambda}}}
\newcommand{\LL}{\mathbb{L}}
\newcommand{\mL}{\mathcal{L}}
\newcommand{\mhL}{\widehat{\mathcal{L}}}
\newcommand{\loc}{\mathrm{loc}}
\newcommand{\tqq}{\mathcal{Q}^{*}}
\newcommand{\tii}{\mathcal{I}^{*}}
\newcommand{\Mts}{\mathbb{M}}
\newcommand{\pot}{\mathrm{pot}}
\newcommand{\tU}{{\widehat{\bf{U}}}}
\newcommand{\tVV}{{\widehat{\bf{V}}}}
\newcommand{\pt}{\partial}
\newcommand{\bg}{\Big}
\newcommand{\hA}{\widehat{{\bf{A}}}}
\newcommand{\hB}{\widehat{{\bf{B}}}}
\newcommand{\hCC}{\widehat{{\bf{C}}}}
\newcommand{\hD}{\widehat{{\bf{D}}}}
\newcommand{\fder}{\partial^{\mathrm{MD}}}
\newcommand{\Var}{\mathrm{Var}}
\newcommand{\pta}{\partial^{0\bot}}
\newcommand{\ptaj}{(\partial^{0\bot})^*}
\newcommand{\ptb}{\partial^{1\bot}}
\newcommand{\ptbj}{(\partial^{1\bot})^*}
\newcommand{\geg}{\Lambda_\vare}
\newcommand{\tpta}{\tilde{\partial}^{0\bot}}
\newcommand{\tptb}{\tilde{\partial}^{1\bot}}
\newcommand{\ua}{u_\alpha}
\newcommand{\pa}{p\alpha}
\newcommand{\qa}{q(1-\alpha)}
\newcommand{\Qa}{Q_\alpha}
\newcommand{\Qb}{Q_\eta}
\newcommand{\ga}{\gamma_\alpha}
\newcommand{\gb}{\gamma_\eta}
\newcommand{\ta}{\theta_\alpha}
\newcommand{\tb}{\theta_\eta}


\newcommand{\mH}{\mathcal{E}}
\newcommand{\mD}{\mathcal{D}}
\newcommand{\csob}{\mathcal{S}}
\newcommand{\mA}{\mathcal{A}}
\newcommand{\mK}{\mathcal{K}}
\newcommand{\mS}{\mathcal{S}}
\newcommand{\mI}{\mathcal{I}}
\newcommand{\tas}{{2_*}}
\newcommand{\tbs}{{2^*}}
\newcommand{\tm}{{\tilde{m}}}
\newcommand{\tdu}{{\phi}}
\newcommand{\tpsi}{{\tilde{\psi}}}
\newcommand{\Z}{{\mathbb{Z}}}
\newcommand{\tsigma}{{\tilde{\sigma}}}
\newcommand{\tg}{{\tilde{g}}}
\newcommand{\tG}{{\tilde{G}}}
\newcommand{\mM}{\mathcal{M}}
\newcommand{\mC}{\mathcal{C}}
\newcommand{\wlim}{{\text{w-lim}}\,}
\newcommand{\diag}{L_{t,x}^4}
\newcommand{\vu}{ u}
\newcommand{\vz}{ z}
\newcommand{\vv}{ v}
\newcommand{\ve}{ e}
\newcommand{\vw}{ w}
\newcommand{\vf}{ f}
\newcommand{\vh}{ h}
\newcommand{\vp}{ \vec P}
\newcommand{\ang}{{\not\negmedspace\nabla}}
\newcommand{\dxy}{\Delta_{x,y}}
\newcommand{\lxy}{L_{x,y}}
\newcommand{\gnsand}{\mathrm{C}_{\mathrm{GN},3d}}

\title{Large data global well-posedness and scattering for the focusing cubic nonlinear Schr\"odinger equation on $\mathbb{R}^2\times\mathbb{T}$}
\author{Yongming Luo \thanks{Institut f\"{u}r Wissenschaftliches Rechnen, Technische Universit\"at Dresden, Germany} \thanks{\href{mailto:yongming.luo@tu-dresden.de}{Email: yongming.luo@tu-dresden.de}}
}

\date{}
\maketitle

\begin{abstract}
We consider the focusing cubic nonlinear Schr\"odinger equation (NLS)
\begin{align}\label{CNLSS}
i\partial_t U+\Delta U=-|U|^2U\quad\text{on $\mathbb{R}^2\times\mathbb{T}$}.\tag{3NLS}
\end{align}
Different from the 3D Euclidean case, the \eqref{CNLSS} is mass-critical and non-scale-invariant on the waveguide manifold $\mathbb{R}^2\times\mathbb{T}$, hence the underlying analysis becomes more subtle and challenging. We formulate thresholds using the 2D Euclidean ground state of the focusing cubic NLS and show that solutions of \eqref{CNLSS} lying below the thresholds are global and scattering in time. The proof relies on several new established Gagliardo-Nirenberg inequalities, whose best constants are formulated in term of the 2D Euclidean ground state. It is also worth noting the interesting fact that the thresholds for global well-posedness and scattering do not coincide. To the author's knowledge, this paper also gives the first large data scattering result for focusing NLS on product spaces.
\end{abstract}

\tableofcontents


\section{Introduction}\label{sec:Introduction}
In this paper, we study the focusing cubic nonlinear Schr\"odinger equation (NLS)
\begin{align}
i\pt_t U+\Delta_{x,y} U=-|U|^2U\label{cnls}
\end{align}
on the waveguide manifold $\R_x^2\times \T_y$. The equation \eqref{cnls} arises from various domains in applied sciences such as nonlinear optics and Bose-Einstein condensation. We refer to \cite{waveguide_ref_1,waveguide_ref_2,waveguide_ref_3} for a detailed introduction on the physical background of \eqref{cnls}. It is worth noting at this point that although large data problems for the defocusing analogues of \eqref{cnls} have been extensively studied \cite{HaniPausader,Yang_Zhao_2018,R1T1Scattering,RmT1,R2T2}, results concerning the focusing models are relatively less well-known. The purpose of this paper is to give a first step towards the large data scattering\footnote{We mainly focus on the (much harder) scattering problem. In fact, due to the energy-subcritical nature of \eqref{cnls} a large data global well-posedness result follows immediately from Lemma \ref{holmer variational} given below. See Theorem \ref{thm gwp} for details.} for NLS of focusing type on product spaces.

A first systematic study on NLS on compact manifolds might at least date back to \cite{BrezisGallouet1980}, where the authors studied the cubic NLS on a bounded domain or an exterior domain in $\R^2$. Concerning NLS on tori, Bourgain studied in his seminal papers \cite{Bourgain1,Bourgain2} the NLS and KdV equations on rational tori. Particularly, using number theoretical methods Bourgain proved (endpoint and non-endpoint) Strichartz estimates for NLS and KdV on rational tori, which were also utilized to establish different local and global well-posedness results. As a byproduct of the proof of the $\ell^2$-decoupling conjecture \cite{l2decoupling}, the Strichartz estimates on rational tori were later extended to irrational ones, including both endpoint and non-endpoint cases.

For NLS on more general compact manifolds, a systematic study was initiated in a series of works \cite{Burq1,Burq2,Burq3} by Burq, G\'{e}rard and Tzvetkov, where the authors proved Strichartz and multilinear estimates, local and global well-posedness results for NLS on compact manifolds. Using the theory of atomic spaces initiated by Koch and Tataru \cite{KochTataru}, Herr, Tataru, and Tzvetkov were able to prove local and global well-posedness results for NLS on tori, product spaces and Zoll-manifolds in the energy-critical case \cite{HerrZoll,HerrTataruTz1,HerrTataruTz2}. However, at the energy-critical level the well-posedness results also depend on the profile of the initial data, and a large data result can in general not be obtained using only the \textit{a priori} bounds deduced from the conservation laws. Following the nowadays well-known concentration compactness arguments initiated by Kenig and Merle \cite{KenigMerle2006} and the so-called Black-Box-Theory, Ionescu, Pausader and Staffilani \cite{Ionescu1,Ionescu2,hyperbolic} showed that defocusing energy-critical NLS on $\T^3$, $\R\times\T^3$ and on the hyperbolic space $\mathbb{H}^3$ are always globally well-posed. By appealing to suitable variational arguments, Yu, Yue and Zhao \cite{Haitian,YuYueZhao2021} utilized the Black-Box-Theory to prove that solutions of the focusing energy-critical NLS on $\T^4$ and $\R\times\T^3$ lying below ground states are always globally well-posed.

The above mentioned models can be generalized to the NLS
\begin{align}
i\pt_t U+\Delta_{x,y} U=\pm |U|^\alpha U\label{general cnls}
\end{align}
on the manifold $\R^d\times \mathcal{M}$, where $\mathcal{M}$ is an $n$-dimensional compact manifold. Loosely speaking, the dispersion of an NLS-wave on a compact manifold is much weaker than on $\R^d$, hence it is not expected that scattering takes place for large $n$. Indeed, even in the defocusing case, a global but not scattering solution of NLS on $\T^d$ does exist, see for instance \cite{global_but_not_scatter}. Nonetheless, in view of the classical long time dynamics results for NLS on Euclidean spaces\footnote{We refer an NLS on Euclidean space to an NLS on $\R^d$.}, the rather weak dispersion effect corresponding to the manifold $\mathcal{M}$ can be compensated by the stronger dispersion on $\R^d$, and scattering solutions\footnote{We are referring here to scattering in the $H^1$-energy space, which is the minimal space that admits all useful conservation laws. Such heuristics do not hold when the scattering is considered in spaces of higher differentiation order, see \cite{TNCommPDE}.} are expected when
\begin{itemize}
\item[(i)] The nonlinearity is at most energy-critical w.r.t. the space dimension $d+n$, and
\item[(ii)]The nonlinearity is at least mass-critical w.r.t. the space dimension $d$.
\end{itemize}
In other words, we expect that a general scattering theory as the one formulated in the Euclidean case should be available for $\alpha$ lying in the range $[\frac{4}{d},\frac{4}{d+n-2}]$. Particularly, it is necessary that $n\leq 2$. In this direction, the first contribution was made by Tzvetkov and Visciglia \cite{TNCommPDE}, where the authors studied well-posedness and scattering of solutions of \eqref{general cnls} with small initial data in non-isotropic Sobolev spaces. The same authors studied later in \cite{TzvetkovVisciglia2016} the special case where $\mathcal{M}=\T$ and \eqref{general cnls} is defocusing. Particularly, they proved that \eqref{general cnls} is always globally well-posed for $\alpha\in(0,\frac{4}{d-1})$ and additionally scattering for $\alpha\in(\frac4d,\frac{4}{d-1})$ in $H^1(\R^d\times\T)$. In the case where $\mathcal{M}=\T^n$ and the nonlinearity is mass-critical ($\alpha=\frac{4}{d}$) or energy-critical ($\alpha=\frac{4}{d+n-2}$), the first breakthrough was made by Hani and Pausader \cite{HaniPausader}, where they studied defocusing \eqref{general cnls} with $\alpha=4$, $d=1$ and $n=2$, which is the well-known defocusing quintic NLS on $\R\times\T^2$. Particularly, \eqref{general cnls} in this case is both mass- and energy-critical. Based on a conjecture on large data scattering of the large scale resonant system of \eqref{general cnls}, which was later solved by Cheng, Guo and Zhao \cite{R1T1Scattering}, Hani and Pausader proved that a solution of \eqref{general cnls} is always global and scattering. Utilizing the methodologies of \cite{HaniPausader} for \eqref{general cnls}, the ones of \cite{Yang_Zhao_2018} for the corresponding large scale resonant system of \eqref{general cnls} and the Black-Box-Theory, the large data scattering problem for defocusing \eqref{general cnls} with critically algebraic nonlinearities on $\R^d\times\T^n$ has been completely resolved \cite{HaniPausader,Yang_Zhao_2018,R1T1Scattering,RmT1,R2T2}. We also refer to \cite{ModifiedScattering,TerraciniTzvetkovVisciglia,SystemProdSpace,hari2016small,HariVisciglia2018,FoprcellaHari2020,Barron} for  further interesting topics in this direction.

Let us now focus on the focusing cubic NLS \eqref{cnls} and explain briefly the mass-criticality of \eqref{cnls}. Indeed, we may simply assume that \eqref{cnls} is independent of the $y$-variable, and in this case \eqref{cnls} reduces to the 2D Euclidean focusing cubic NLS, which is known to be mass-critical. In order to incorporate the full impact of $\T$ into the problem, we should instead consider the following scaling transformation heuristics inspired by Hani and Pausader \cite{HaniPausader}: it is easy to verify that \eqref{cnls} remains invariant under the scaling transformation
\begin{align}
U(t,x,y)\mapsto U_\ld(t,x,y):=\ld U(\ld^{2}t,\ld (x,y)).
\end{align}
We should however keep in mind that the occupying domain $\R^2\times \T$ is also deformed to $\R^2\times \ld^{-1} \T$. By sending $\ld\to\infty$\footnote{Since \eqref{cnls} is energy-subcritical, the small scale limit $\ld\to 0$ is irrelevant.} (namely the so-called large scale limit) we see that the deformed torus $\ld^{-1}\T$ becomes thinner and thinner, thus also ignorable in comparison with $\R^2$. To make these heuristics rigorous, we recall that the solution $U$ can be written into the Fourier series (up to some Fourier constant) w.r.t. the $y$-variable
\begin{align}
U(t,x,y)=\sum_{k\in\Z}(\mathcal{F}_y U)(t,x,k)e^{iky}.
\end{align}
Hence we may represent the nonlinear potential $|U|^2U$ by
\begin{align}
(|U|^2U)(y)=\sum_{k}\sum_{(k_1,k_2,k_3)\in \mathcal{I}_k}\mathcal{F}_y U(k_1)\overline{\mathcal{F}_y U}(k_2)
\mathcal{F}_y U(k_3)e^{iky},
\end{align}
where
$$\mathcal{I}_k:=\{(k_1,k_2,k_3)\in\Z^3:k_1-k_2+k_3=k\}.$$
We may further decompose $\mathcal{I}_k$ into the resonant part (RS) and non-resonant part (NRS):
\begin{align}
\mathcal{I}_k=\{(k_1,k_2,k_3)\in\mathcal{I}_k:k_1^2-k_2^2+k_3^2=k^2\}
\cup\{(k_1,k_2,k_3)\in\mathcal{I}_k:k_1^2-k_2^2+k_3^2\neq k^2\}
=:\mathcal{RS}_k\cup\mathcal{NRS}_k.
\end{align}
The idea is as follows: the RS part can be seen as a non-perturbative component that should be dealt in a more complex and serious way. Nevertheless, by applying a normal form transformation, the NRS part will be relaxed in the large scale limitation. We refer to \cite[Lem. 5.7]{HaniPausader} or \cite[Lem. 3.11]{CubicR2T1Scattering} for details of a rigorous verification of such intuitive heuristics. This suggests us to study the large scale resonant system
\begin{align}
i\pt_t u_j+\Delta_x u_j=-\bg(\sum_{(j_1,j_2,j_3)\in\mathcal{RS}_j}u_{j_1}\bar u_{j_2}u_{j_3}\bg)u_j,\quad j\in\Z
\end{align}
on $\R_x^2$. By fundamental counting combinatorics, the large scale resonant system can be reformulated to
\begin{align}\label{nls}
i\pt_t u_j+\Delta_x u_j=-\bg(\sum_{i}|u_i|^2+\sum_{i\neq j}|u_i|^2\bg)u_j,\quad j\in\Z,
\end{align}
which will be the main model under consideration in the remaining part of the present paper.

Before we turn to the main results, we recall several conservation laws and symmetry invariance of the NLS which will be useful for the upcoming proofs. For the NLS \eqref{cnls}, we have following classical conservation laws:
\begin{alignat}{2}
\text{Mass}:&\quad \mM(U)&&=\|U\|^2_{L^2(\R^2\times\T)},\\
\text{Energy}:&\quad \mH(U)&&=\frac{1}{2}\|\nabla_{x,y} U\|^2_{L^2(\R^2\times\T)}-\frac14\|U\|_{L^4(\R^2\times\T)}^4,\\
\text{Momentum}:&\quad \mathcal{P}(U)&&=\mathrm{Im}\int_{\R^2\times\T}\overline{U}\nabla_{x,y}U\,dxdy.
\end{alignat}
For the large scale resonant system \eqref{nls}, the following conservation laws hold (see \cite{HaniPausader}):
\begin{alignat}{2}
\text{Mass}:&\quad M_0(\vu)&&=\|\vu\|^2_{\ell^2L_x^2}=\sum\nolimits_{j} \|\vu\|^2_{L^2(\R^2)},\\
\text{Weighted mass}:&\quad M_1(\vu)&&=\|\vu\|^2_{h^1L_x^2}=\sum\nolimits_{j} \la j\ra^2\|\vu\|^2_{L^2(\R^2)},\\
\text{Energy}:&\quad E(\vu)&&=\frac{1}{2}\|\nabla_x\vu\|^2_{\ell^2L_x^2}-\frac14\int_{\R^2}\bg(\sum_j|u_j|^2\bg)^2
+\sum_j\bg(|u_j|^2\sum_{i\neq j}|u_i|^2\bg)\,dx.
\end{alignat}
Moreover, by direct calculation it is also immediate that \eqref{cnls} and \eqref{nls} are invariant under the Galilean transformation
\begin{align}
U(t,x,y)&\mapsto e^{i\xi\cdot x}e^{-it|\xi|^2}U(t,x-2\xi t,y),\\
u(t,x)&\mapsto e^{i\xi\cdot x}e^{-it|\xi|^2}u(t,x-2\xi t)\label{vector}
\end{align}
for arbitrary $\xi\in\R^2$, where the Galilean transformation in \eqref{vector} is understood componentwise.

\subsection{Main results}

We begin with formulating the large data scattering result for the large scale resonant system \eqref{nls}. Following the idea in \cite{weinstein}, we define the Weinstein problem by
\begin{align}\label{weinstein problem}
\mathrm{C}_{\rm GN,rs}:=\inf_{\vu\in \ell^2 H_x^1}\frac{\|\vu\|^2_{\ell^2 L_x^2}\|\nabla\vu\|^2_{\ell^2 L_x^2}}{\int_{\R^2}(\sum_i|\vu_i|^2)^2+\sum_{j}(\sum_{i\neq j}|\vu_i|^2)|\vu_j|^2\,dx}
\end{align}
As revealed in \cite{weinstein}, the Weinstein problem \eqref{weinstein problem} is closely related to the Gagliardo-Nirenberg inequality and provides a sharp threshold for well-posedness problems of NLS of focusing type. Our first result gives a precise description of the constant $\mathrm{C}_{\rm GN,rs}$ in term of the 2D Euclidean ground state of the focusing cubic NLS.
\begin{proposition}[Large scale Gagliardo-Nirenberg inequality]\label{gagliardo nirenberg}
Define
\begin{align}\label{standard gn ineq}
\mathrm{C}_{\mathrm{GN},2d}:=\inf_{u\in H^1(\R^2)}\frac{\|u\|^2_{L^2(\R^2)}\|\nabla u\|^2_{L^2(\R^2)}}{\|u\|_{L^4(\R^2)}^4}.
\end{align}
Then $\mathrm{C}_{\rm GN,rs}=\frac{1}{2}\mathrm{C}_{\mathrm{GN},2d}$.
\end{proposition}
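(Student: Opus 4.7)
The plan is to prove the identity through matching upper and lower bounds on the Weinstein quotient, after first rewriting the denominator of \eqref{weinstein problem} in a more convenient form. Setting $\rho := \sum_i |\vu_i|^2$, separating the diagonal from the off-diagonal contribution of the second sum yields
\begin{align}
\sum_j \Bigl(\sum_{i \neq j}|\vu_i|^2\Bigr)|\vu_j|^2 \;=\; \sum_{i\neq j}|\vu_i|^2|\vu_j|^2 \;=\; \rho^2 - \sum_i |\vu_i|^4,
\end{align}
so the denominator equals $\int_{\R^2}\bigl(2\rho^2 - \sum_i |\vu_i|^4\bigr)\,dx$. Since $\sum_i |\vu_i|^4 \leq \rho^2$ pointwise, this quantity is controlled above by $2\|\rho\|_{L^2(\R^2)}^2$ and below by $\|\rho\|_{L^2(\R^2)}^2$.

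For the lower bound $\mathrm{C}_{\rm GN,rs} \geq \tfrac12 \mathrm{C}_{\mathrm{GN},2d}$, I would apply the scalar 2D Gagliardo--Nirenberg inequality \eqref{standard gn ineq} to $\sqrt\rho \in H^1(\R^2)$, for which $\|\sqrt\rho\|_{L^2}^2 = \|\vu\|_{\ell^2 L_x^2}^2$ and $\|\sqrt\rho\|_{L^4}^4 = \|\rho\|_{L^2}^2$. The missing piece is a pointwise diamagnetic-type estimate: writing $\nabla\sqrt\rho = \rho^{-1/2}\,\mathrm{Re}\sum_j\bar \vu_j\nabla \vu_j$ and applying Cauchy--Schwarz in the component index $j$ gives $|\nabla\sqrt\rho|^2 \leq \sum_j|\nabla \vu_j|^2$, hence $\|\nabla\sqrt\rho\|_{L^2}^2 \leq \|\nabla \vu\|_{\ell^2 L_x^2}^2$. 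Combining these facts with \eqref{standard gn ineq} produces $\|\rho\|_{L^2}^2 \leq \mathrm{C}_{\mathrm{GN},2d}^{-1}\|\vu\|_{\ell^2 L_x^2}^2\|\nabla \vu\|_{\ell^2 L_x^2}^2$, and the bound on the denominator then delivers the desired inequality.

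For the matching upper bound, I would construct a minimising family by spreading the 2D Euclidean ground state $Q$ uniformly over many components. Concretely, for each $N \in \mathbb{N}$ set $\vu^{(N)}_j := Q$ for $1 \leq j \leq N$ and $\vu^{(N)}_j := 0$ otherwise. A direct calculation using that all the nontrivial components are identical gives numerator $N^2\|Q\|_{L^2}^2\|\nabla Q\|_{L^2}^2$ and denominator $(2N^2 - N)\|Q\|_{L^4}^4$, so the Weinstein quotient along this sequence equals $\tfrac{N}{2N-1}\mathrm{C}_{\mathrm{GN},2d}$, which is monotonically decreasing in $N$ and tends to $\tfrac12\mathrm{C}_{\mathrm{GN},2d}$. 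Hence $\mathrm{C}_{\rm GN,rs} \leq \tfrac12\mathrm{C}_{\mathrm{GN},2d}$, completing the proof.

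The only technical ingredient is the diamagnetic-type inequality bounding $|\nabla\sqrt\rho|$ by $(\sum_j|\nabla \vu_j|^2)^{1/2}$, but this is a clean consequence of the Cauchy--Schwarz inequality in $j$ and presents no real obstacle. A conceptual observation worth emphasising is that the infimum is never attained inside $\ell^2 H^1_x$: the optimising ratio is only reached in the large-$N$ limit of the spreading sequence above, which reflects the loss-of-compactness feature intrinsic to the resonant system and is presumably related to the gap between the global well-posedness and scattering thresholds announced in the abstract.
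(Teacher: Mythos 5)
Your proposal is correct, and the upper‑bound half is essentially the paper's own construction: the paper fills the $2n+1$ slots $|i|\le n$ of the vector with copies of $Q_{2d}$, you fill $N$ slots $1\le j\le N$; setting $N=2n+1$ reproduces the same quotient $\tfrac{N}{2N-1}\,\mathrm{C}_{\mathrm{GN},2d}\to\tfrac12\,\mathrm{C}_{\mathrm{GN},2d}$. The lower‑bound half, however, takes a genuinely different route. The paper estimates
\begin{align}
\int_{\R^2}\Bigl(\sum_j|u_j|^2\Bigr)^2\,dx\;\le\;\Bigl(\sum_j\|u_j\|_{L^4_x}^2\Bigr)^2\;\le\;\mathrm{C}_{\mathrm{GN},2d}^{-1}\Bigl(\sum_j\|u_j\|_{L^2_x}\|\nabla u_j\|_{L^2_x}\Bigr)^2\;\le\;\mathrm{C}_{\mathrm{GN},2d}^{-1}\,\|\vu\|_{\ell^2 L_x^2}^2\|\nabla\vu\|_{\ell^2 L_x^2}^2,
\end{align}
i.e.\ Minkowski, then the scalar inequality componentwise, then Cauchy--Schwarz in $j$. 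You instead apply the scalar inequality a single time to the scalar function $\sqrt\rho$ and control $\|\nabla\sqrt\rho\|_{L^2}$ via the vector diamagnetic (Kato‑type) estimate $|\nabla\sqrt\rho|\le\bigl(\sum_j|\nabla u_j|^2\bigr)^{1/2}$ a.e. The two chains produce the identical numeric bound, and both are paired with the same elementary observation $\int\bigl(2\rho^2-\sum_i|u_i|^4\bigr)\,dx\le 2\|\rho\|_{L_x^2}^2$. Your version is arguably more conceptual, exhibiting $\sqrt\rho$ as the scalar object to which the Euclidean inequality really applies (and foreshadowing why no optimiser should exist in $\ell^2 H^1_x$), but it requires the usual care in justifying that $\sqrt\rho\in H^1(\R^2)$ and that the pointwise Kato inequality survives the infinite sum in $j$ (approximate by finite truncations and by $\sqrt{\rho+\varepsilon^2}-\varepsilon$); the paper's Minkowski route avoids any discussion of the regularity of $\sqrt\rho$. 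Both approaches are fine.
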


\begin{remark}\label{pohozaeve remark}
By Pohozaev's identity (see for instance \cite{lions1}) it is immediate that
$$\mathrm{C}_{\rm GN,rs}=\frac{1}{2}\mathrm{C}_{\mathrm{GN},2d}=\frac14\mM(Q_{2d}),$$
where $Q_{2d}$ is the unique positive, radial solution of
\begin{align}
-\Delta Q_{2d}+Q_{2d}=Q_{2d}^3\quad\text{on $\R^2$}.
\end{align}
\end{remark}
Proposition \ref{gagliardo nirenberg} motivates the following large data scattering result for \eqref{nls}:

\begin{theorem}[Large data scattering for the large scale resonant system]\label{thm large scale resonant}
Let $\vu_0\in h^1 L_x^2$ satisfy $M_0(\vu)<\frac12\mM(Q_{2d})$. Then a solution $\vu$ of \eqref{nls} with $\vu(0)=\vu_0$ is global and scattering in time, i.e. there exist $\phi^{\pm}\in h^1 L_x^2$ such that
\begin{align}
\lim_{t\to\pm\infty}\|u(t)-e^{it\Delta_x}\phi^{\pm}\|_{h^1 L_x^2}=0.
\end{align}
\end{theorem}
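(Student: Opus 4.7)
The plan is to follow the Kenig--Merle concentration-compactness / rigidity road map, adapted to the focusing mass-critical setting in the style of Killip--Tao--Visan--Zhang and Dodson, now for the vector-valued system \eqref{nls}. The variational input is already supplied by Proposition~\ref{gagliardo nirenberg}: together with conservation of mass and energy, the identity $\mathrm{C}_{\rm GN,rs}=\frac14\mM(Q_{2d})$ turns the strict sub-threshold assumption $M_0(\vu_0)<\tfrac12\mM(Q_{2d})$ into an unconditional a priori bound $\|\nabla_x\vu(t)\|_{\ell^2L_x^2}^2\le C(\vu_0)E(\vu_0)$ on the maximal existence interval, because the ratio $M_0(\vu_0)/(\tfrac12\mM(Q_{2d}))<1$ leaves a positive gap in the coercivity estimate. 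Together with conservation of $M_1$, this yields uniform $h^1L_x^2$ control and reduces the theorem to proving a global Strichartz-type spacetime bound.

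I would first set up the local theory in $h^1L_x^2$ using the 2D Euclidean Strichartz estimates for $e^{it\Delta_x}$, summed in the Fourier index $j\in\Z$; since the nonlinearity is cubic and mass-critical on $\R_x^2$, scattering is equivalent to finiteness of a norm of type $\|\vu\|_{L_t^4L_x^4 h^1}$. Standard small-data well-posedness and a mass-critical stability lemma then permit the definition of the critical mass
\begin{align*}
m_c:=\sup\{m:\text{every solution with }M_0(\vu_0)\le m\text{ is global and scatters}\},
\end{align*}
and I argue by contradiction assuming $m_c<\tfrac12\mM(Q_{2d})$.

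The core step is the extraction of a minimal-mass non-scattering solution. I would establish a linear profile decomposition in $h^1L_x^2$ for bounded sequences of free evolutions $e^{it\Delta_x}\vu_{0,n}$, along the lines of B\'egout--Vargas and the vector-valued refinement in \cite{R1T1Scattering}, where the non-compactness is driven by spatial translations, time translations and Galilean boosts (there is no rescaling parameter, since 2D mass-critical scaling exactly preserves both the $L_x^2$-norm and the discrete $\ell^2$-summation). Coupling this with a nonlinear profile construction---solving each profile via the inductive scattering hypothesis at masses strictly below $m_c$---and the stability theory produces an almost-periodic critical element $\vu_c$ with $M_0(\vu_c)=m_c$ whose orbit modulo translations and Galilean boosts is precompact in $h^1L_x^2$.

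The rigidity step then rules out $\vu_c$ via a frequency-localized interaction Morawetz estimate in the spirit of Dodson's proof for the 2D mass-critical focusing NLS. The principal obstacle I anticipate lies in this last step together with the accompanying nonlinear profile embedding, because the nonlinearity in \eqref{nls} couples different Fourier modes via both a diagonal piece $(\sum_i|u_i|^2)^2$ and the off-diagonal piece $\sum_j|u_j|^2\sum_{i\neq j}|u_i|^2$: one must verify that mass cannot decouple across infinitely many modes without each limiting profile falling into the sub-threshold scattering regime, and that the Morawetz integrand keeps a favourable sign along the orbit of $\vu_c$ despite the focusing sign of the nonlinearity. It is precisely here that the strict inequality $m_c<\tfrac12\mM(Q_{2d})$ combined with Proposition~\ref{gagliardo nirenberg} is used decisively, forcing the kinetic contribution to dominate the potential contribution along the almost-periodic trajectory and finally yielding $\vu_c\equiv 0$, the desired contradiction.
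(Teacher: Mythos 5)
Your outline correctly identifies the broad Kenig--Merle road map that the paper follows, and you are right that Proposition~\ref{gagliardo nirenberg} together with mass and energy conservation supplies the uniform $\ell^2\dot H^1_x$ bound, that the critical element is extracted through a profile decomposition and a stability argument, and that the rigidity step should be a Dodson-type frequency-localized interaction Morawetz estimate using the potentials from \cite{Dodson4dmassfocusing} rather than a Planchon--Vega-type one (which fails in the focusing case). In these respects you match the paper's strategy, which further delegates the extraction of the minimal blow-up solution to the defocusing argument of \cite{Yang_Zhao_2018}.

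There is, however, a genuine conceptual error in your parenthetical claim that ``there is no rescaling parameter, since 2D mass-critical scaling exactly preserves both the $L_x^2$-norm and the discrete $\ell^2$-summation.'' You have the implication reversed: precisely \emph{because} the mass-critical scaling $u_j(x)\mapsto\lambda u_j(\lambda x)$ preserves $\|u_j\|_{L_x^2}$ and the weighted $\ell^2$-sums $M_0$, $M_1$, a bounded sequence can concentrate at arbitrarily small or large scales without mass escaping, and scaling is therefore a non-compactness mode that \emph{must} be included in the profile decomposition. In the paper this is reflected in the $L_x^2$-symmetry group $g_{\xi_0,x_0,\lambda_0}$, which carries a scale parameter $\lambda_0\in(0,\infty)$, and in Lemma~\ref{arzela ascoli}, where the almost-periodic solution comes with a frequency-scale function $N(t)$ that plays the central role in all subsequent analysis. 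Without $N(t)$ there is no way to formulate the dichotomy between the rapid-cascade scenario $\int_0^\infty N(t)^3\,dt<\infty$ and the quasi-soliton scenario $\int_0^\infty N(t)^3\,dt=\infty$, which is how the paper organizes the rigidity step (Lemmas~\ref{impos rapid cascade} and~\ref{lem quasi soliton}): the interaction Morawetz estimate you invoke only handles the quasi-soliton case, while the rapid-cascade case is ruled out separately by the higher-regularity bootstrap of Lemma~\ref{lem higher reg} combined with Proposition~\ref{gagliardo nirenberg}. Moreover the whole Morawetz argument is set up around the long-time Strichartz estimate of \cite{Yang_Zhao_2018} (which furnishes the frequency projector $P_{\le\vare_3^{-1}K}$ with $K=\int_0^T N(t)^3\,dt$ and the smallness in Lemma~\ref{lem small K}), and this estimate is itself formulated in terms of $N(t)$. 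So the absence of the scale parameter is not a simplification of the argument but a gap that removes the mechanism on which the rigidity step rests.
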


The proof follows the standard concentration compactness arguments from \cite{KenigMerle2006}. In order to exclude the minimal-blowup solution, we invoke the so-called long time Strichartz estimate to rule out the rapid cascade and quasi-soliton scenarios. Such long time Strichartz estimates were initiated by Dodson \cite{dodson1d,dodson2d,dodson3d,Dodson4dmassfocusing,BookDodson} for the study of Euclidean mass-critical NLS. The one we use in this paper is the vector-valued variant deduced in \cite{Yang_Zhao_2018} for the defocusing analogue of \eqref{nls}. We point out that ruling out the rapid cascade scenario is just a straightforward modification of the same arguments given in \cite{dodson2d,Yang_Zhao_2018}, by combining also Proposition \ref{gagliardo nirenberg}. However, the Planchon-Vega-type interaction Morawetz inequality \cite{2dinteraction} applied in \cite{Yang_Zhao_2018} can not be used for the focusing model to rule out the quasi-soliton scenario. Alternatively, we utilize the potentials constructed in \cite{Dodson4dmassfocusing} to achieve this goal.

We now turn our attention to the main model \eqref{cnls}. As usual, the focusing nature of \eqref{cnls} generally does not admit scattering for arbitrary initial data and a suitable variational analysis for formulating scattering thresholds will be necessary. The starting point of our variational analysis is the following scale-invariant (w.r.t. $x$-variable) Gagliardo-Nirenberg inequality of additive type.

\begin{proposition}[Gagliardo-Nirenberg inequality on $\R^2\times\T$]\label{prop gn cnls}
Let $\mathrm{C}_{\T}$ be the best constant of the inequality
\begin{align}
\|u\|^4_{L^4(\T)}\leq \mathrm{C}_{\T}\|u\|^{3}_{L^2(\T)}\|\nabla_y u\|_{L^2(\T)}
\end{align}
for functions $u\in H^1(\T)$ with $\int_{\T}u\,dy=0$. Let also
\begin{align}
\widehat{\rm G}_{\mathrm{GN},2d}:=\inf_{u\in H^1(\R^2)}\frac{\|u\|^2_{L^2(\R^2)}\|\nabla_x u\|^4_{L^2(\R^2)}}{\|u\|^6_{L^6(\R^2)}}.
\end{align}
Then there exists some $c\in(0,\mathrm{C}_{\T}^{\frac14}\widehat{\rm G}_{\mathrm{GN},2d}^{-\frac18}(1+(2\pi)^{-\frac12})^{\frac34}]$ such that for all $u\in H^1(\R^2\times\T)$ we have
\begin{align}
\|u\|_{L^4(\R^2\times\T)}&\leq
\|\nabla_x u\|^{\frac12}_{L^2(\R^2\times\T)}\bg((\pi\mM(Q_{2d}))^{-\frac14}\|u\|^{\frac12}_{L^2(\R^2\times\T)}
+c\|u\|^{\frac14}_{L^2(\R^2\times\T)}\|\nabla_y u\|^{\frac14}_{L^2(\R^2\times\T)}\bg)
\label{r2t1gn}.
\end{align}
\end{proposition}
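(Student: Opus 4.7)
The plan is to reduce the inequality to two 2D Euclidean Gagliardo--Nirenberg bounds -- the $L^4$-type with sharp constant from Remark~\ref{pohozaeve remark} and the $L^6$-type with constant $\widehat{\rm G}_{\mathrm{GN},2d}$ -- together with the hypothesized $\T$-inequality, via the Fourier zero-mode decomposition in the torus variable. Concretely, I would set
\[
u_0(x):=\frac{1}{2\pi}\int_{\T}u(x,y)\,dy,\qquad v(x,y):=u(x,y)-u_0(x),
\]
so that $u=u_0+v$ is $L^2(\T)$-orthogonal at each $x$. The orthogonality immediately yields $\|u_0\|_{L^2(\R^2\times\T)},\|v\|_{L^2(\R^2\times\T)}\le\|u\|_{L^2}$, the analogous bounds for $\nabla_x$, and $\nabla_y u=\nabla_y v$, while $v(x,\cdot)$ has mean zero on $\T$ for a.e.\ $x$. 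The triangle inequality $\|u\|_{L^4}\le\|u_0\|_{L^4}+\|v\|_{L^4}$ then reduces the problem to estimating each summand separately.

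For the zero mode $u_0$, which depends only on $x$, one has $\|u_0\|_{L^4(\R^2\times\T)}^4=2\pi\|u_0\|_{L^4(\R^2)}^4$. Combining the 2D Euclidean GN inequality with best constant $\mathrm{C}_{\mathrm{GN},2d}=\frac12\mM(Q_{2d})$ (Remark~\ref{pohozaeve remark}) with the identity $\|u_0\|_{L^2(\R^2)}^2=(2\pi)^{-1}\|u_0\|_{L^2(\R^2\times\T)}^2$ (and the same identity for $\nabla_x u_0$), taking a fourth root, and using the trivial bounds $\|u_0\|_{L^2}\le\|u\|_{L^2}$, $\|\nabla_x u_0\|_{L^2}\le\|\nabla_x u\|_{L^2}$, gives
\[
\|u_0\|_{L^4(\R^2\times\T)}\le (\pi\mM(Q_{2d}))^{-1/4}\,\|u\|_{L^2}^{1/2}\|\nabla_x u\|_{L^2}^{1/2},
\]
which reproduces exactly the first summand of the target bound once the factor $\|\nabla_x u\|_{L^2}^{1/2}$ is pulled out.

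For the non-zero mode $v$, I would apply the hypothesized $\T$-inequality pointwise in $x$ (legitimate since $\int_\T v(x,y)\,dy=0$) and then Cauchy--Schwarz in $x$: setting $V(x):=\|v(x,\cdot)\|_{L^2(\T)}$,
\[
\|v\|_{L^4(\R^2\times\T)}^4\le \mathrm{C}_{\T}\int_{\R^2}V(x)^3\|\nabla_y v(x,\cdot)\|_{L^2(\T)}\,dx \le \mathrm{C}_{\T}\|V\|_{L^6(\R^2)}^3\|\nabla_y u\|_{L^2}.
\]
The $L^6$-type 2D GN bound $\|V\|_{L^6(\R^2)}^3\le\widehat{\rm G}_{\mathrm{GN},2d}^{-1/2}\|V\|_{L^2(\R^2)}\|\nabla V\|_{L^2(\R^2)}^2$, together with $\|V\|_{L^2(\R^2)}=\|v\|_{L^2(\R^2\times\T)}\le\|u\|_{L^2}$ and the Kato-type pointwise estimate $|\nabla_x V(x)|\le\|\nabla_x v(x,\cdot)\|_{L^2(\T)}$ (obtained by differentiating $V^2=\int_\T|v|^2\,dy$ and applying Cauchy--Schwarz), yields $\|\nabla V\|_{L^2(\R^2)}\le\|\nabla_x u\|_{L^2}$ and hence
\[
\|v\|_{L^4(\R^2\times\T)}\le \mathrm{C}_{\T}^{1/4}\widehat{\rm G}_{\mathrm{GN},2d}^{-1/8}\,\|u\|_{L^2}^{1/4}\|\nabla_x u\|_{L^2}^{1/2}\|\nabla_y u\|_{L^2}^{1/4}.
\]
Summing this with the zero-mode estimate and factoring out $\|\nabla_x u\|_{L^2}^{1/2}$ delivers the claim with $c=\mathrm{C}_{\T}^{1/4}\widehat{\rm G}_{\mathrm{GN},2d}^{-1/8}$, well inside the stated range (the slack factor $(1+(2\pi)^{-1/2})^{3/4}>1$ leaves ample room for any less sharp bookkeeping one might adopt).

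The only mildly non-routine ingredient is the Kato-type bound $|\nabla_x V|\le\|\nabla_x v\|_{L^2_y}$: on $\{V>0\}$ it follows from differentiating $V^2$ directly, while the a.e.\ identity $\nabla V=0$ on $\{V=0\}$ follows from a standard Stampacchia-type lemma, first applied to smooth $v$ and then extended to $v\in H^1(\R^2\times\T)$ by approximation. Otherwise the argument is just a two-step chaining of established Gagliardo--Nirenberg inequalities through the Fourier zero-mode split.
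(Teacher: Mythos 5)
Your proposal is correct. You use the same zero-mode decomposition in the torus variable that the paper does, and the treatment of the mean part $u_0$ is identical (2D Euclidean GN plus Jensen, with the factors of $2\pi$ booked the same way). You also reach the same intermediate bound
$\|v\|_{L^4_{x,y}}^4\le \mathrm{C}_\T\|v\|_{L_x^6 L_y^2}^3\|\nabla_y u\|_{L^2_{x,y}}$
for the oscillatory part by H\"older/Cauchy--Schwarz after applying the $\T$-inequality at each $x$.

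The one place where you genuinely diverge from the paper is in how the $L^6$-type 2D GN inequality is invoked to control $\|v\|_{L^6_x L^2_y}$. The paper applies Minkowski's integral inequality to swap the order of norms ($\|v\|_{L_x^6 L_y^2}\le\|v\|_{L_y^2 L_x^6}$), then applies the scalar 2D GN inequality pointwise in $y$, and closes with H\"older in $y$; this is completely elementary and needs no vector-valued machinery. You instead apply the 2D GN directly to the scalar function $V(x)=\|v(x,\cdot)\|_{L^2(\T)}$, which requires the Kato-type (diamagnetic) bound $|\nabla_x V(x)|\le\|\nabla_x v(x,\cdot)\|_{L^2(\T)}$, with the usual caveat on the set $\{V=0\}$ that you correctly flag. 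Both routes work; the Minkowski route avoids the Kato lemma entirely, while yours is arguably more in the spirit of treating $v$ as a curve valued in $L^2(\T)$. A minor bonus of your version: by exploiting the $L^2_y$-orthogonality $\|v\|_{L^2_{x,y}}\le\|u\|_{L^2_{x,y}}$ and $\|\nabla_x v\|_{L^2_{x,y}}\le\|\nabla_x u\|_{L^2_{x,y}}$ directly (rather than the cruder triangle-inequality bookkeeping the paper uses), you land on the tighter constant $c=\mathrm{C}_\T^{1/4}\widehat{\rm G}_{\mathrm{GN},2d}^{-1/8}$, which is still within the stated admissible range.
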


\begin{remark}\label{sharp}
Here follow several comments on Proposition \ref{prop gn cnls}.
\begin{itemize}
\item[(i)] The existence of $\widehat{\rm G}_{\mathrm{GN},2d}$ follows from the classical Euclidean Gagliardo-Nirenberg inequality. The existence of $\mathrm{C}_{\T}$ will be shown in the proof of Proposition \ref{prop gn cnls}.

\item[(ii)]The constant $(\pi\mM(Q_{2d}))^{-\frac14}$ is sharp in the sense that there exists no non-negative number $\tilde{c}\geq0$ such that \eqref{r2t1gn} holds when $(\pi\mM(Q_{2d}))^{-\frac14}$ is replaced by a smaller number and $c$ is replaced by $\tilde{c}$. Indeed, we can simply take $u$ independent of $y\in\T$ and the second term in \eqref{r2t1gn} is equal to zero. Then replacing  $(\pi\mM(Q_{2d}))^{-\frac14}$ by any smaller number would lead to a contradiction to \eqref{standard gn ineq}. On the other hand, \eqref{r2t1gn} can not hold for $c=0$. To see this, we can simply insert $u(x,y)=Q_{2d}(x)\phi(y)$ into \eqref{r2t1gn} to obtain the contradiction $\|\phi\|_{L^4(\T)}\leq (2\pi)^{-\frac14}\|\phi\|_{L^2(\T)}$ for all $\phi\in H^1(\T)$. However, we do not know if the number $\mathrm{C}_{\T}^{\frac14}\widehat{\rm G}_{\mathrm{GN},2d}^{-\frac18}(1+(2\pi)^{-\frac12})^{\frac34}$ is optimal.
\end{itemize}
\end{remark}

In view of \eqref{r2t1gn} and Remark \ref{sharp}, we define
\begin{align}
c_*:=\inf\{c>0:\,\text{\eqref{r2t1gn} holds for $c$}\}>0.\label{defcdelta}
\end{align}
Having all the preliminaries we are able to formulate the large data scattering result for \eqref{cnls}:
\begin{theorem}[Large data scattering for focusing cubic NLS on $\R^2\times\T$]\label{main thm}
Define
\begin{align}
\Gamma(m):=c_*^{-8}m^{-1}(2^{\frac14}-(\pi\mM(Q_{2d}))^{-\frac14}m^{\frac14})^8\label{gamma def}
\end{align}
and for $f\in L^2(\R^2\times\T)$ define $\Gamma(f):=\Gamma(\mM(f))$. Let $U_0\in H^1(\R^2\times\T)$ satisfy
\begin{gather}
\mM(U_0)\in(0,\pi\mM(Q_{2d})),\label{threshold1}\\
\mH(U_0)<2^{-1}\Gamma(U_0),\label{threshold2}\\
\|\nabla_y U_0\|_{L^2(\R^2\times\T)}^2<
\Gamma(U_0).\label{threshold3}
\end{gather}
Then the solution $U$ of \eqref{cnls} with $U(0)=U_0$ is global and scattering in time, i.e. there exist $\Phi^\pm\in H^1(\R^2\times\T)$ such that
\begin{align}
\lim_{t\to\pm\infty}\|U(t)-e^{it\Delta_{x,y}}\Phi^\pm\|_{H^1(\R^2\times\T)}=0.
\end{align}
\end{theorem}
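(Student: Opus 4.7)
The plan is to combine a variational trapping argument with a Kenig--Merle concentration-compactness/rigidity scheme that exploits the resonant-system approximation at the large-scale limit. First I would deduce coercivity of the energy from Proposition \ref{prop gn cnls} together with the definitions of $c_*$ and $\Gamma$. Writing $K(U):=(\pi\mM(Q_{2d}))^{-\frac14}\mM(U)^{\frac14}+c_*\mM(U)^{\frac18}\|\nabla_y U\|^{\frac14}_{L^2}$ and plugging \eqref{r2t1gn} into the energy yields
\begin{align}
\mH(U)\geq \tfrac14\|\nabla_x U\|^2_{L^2}\bigl(2-K(U)^4\bigr)+\tfrac12\|\nabla_y U\|^2_{L^2}.
\end{align}
Hypothesis \eqref{threshold3} is precisely $K(U_0)^4<2$, and combining this with \eqref{threshold2} and conservation of mass and energy closes a continuity bootstrap showing that $\|\nabla_y U(t)\|^2_{L^2}<\Gamma(U_0)$ (equivalently $K(U(t))^4<2$) persists with a uniform gap throughout the lifespan. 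This yields uniform $H^1(\R^2\times\T)$ control and hence global existence.

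Assuming scattering fails, I would run the Kenig--Merle induction-on-size machinery to produce a minimal almost-periodic blow-up solution $U_c$. The key input is a linear profile decomposition for bounded sequences in $H^1(\R^2\times\T)$ adapted to the symmetries of \eqref{cnls}. Following \cite{HaniPausader} and subsequent product-space works, the profiles should fall into two categories: Euclidean profiles living on $\R^2\times\T$ at bounded scale, and large-scale profiles with rescaling parameter $\ld\to\infty$ whose rescalings converge to data for the resonant system \eqref{nls} on $\R^2$. Mass--energy orthogonality together with the strict threshold reduces the blow-up scenario to a single profile whose mass and $\nabla_y$-quantity are both sub-threshold.

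A large-scale profile is excluded by the Hani--Pausader approximation procedure combined with Theorem \ref{thm large scale resonant}: since $\mM(U_0)<\pi\mM(Q_{2d})$, the mass of the resulting rescaled data satisfies $M_0<\frac12\mM(Q_{2d})$, so the approximating solution of \eqref{nls} scatters, contradicting the blow-up hypothesis. A bounded-scale Euclidean profile instead produces a genuine almost-periodic solution $U_c$ on $\R^2\times\T$; to rule this out I would import the vector-valued long-time Strichartz estimate of \cite{Yang_Zhao_2018} and the Dodson-type monotonicity potentials used in the proof of Theorem \ref{thm large scale resonant}, combined with a Fourier-in-$y$ decomposition that brings $U_c$ into a form structurally close to \eqref{nls}. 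The rapid-cascade scenario is then ruled out via the sub-threshold variational bound of the first step and Proposition \ref{prop gn cnls}, while the quasi-soliton scenario is ruled out via Dodson's weighted $L_{t,x}^4$ monotonicity estimate.

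The main obstacle I anticipate is the exclusion of the quasi-soliton: the focusing sign forbids the Planchon--Vega interaction Morawetz route available in the defocusing product-space literature, so Dodson's potential construction must be transported to $\R^2\times\T$ while preserving coercivity uniformly across the $y$-frequencies that arise in the Fourier decomposition. A secondary technical issue is a stability/perturbation lemma robust enough to approximate $U_c$ by resonant-system data through the large-scale limit, which requires a quantitative version of the Hani--Pausader approximation tailored to the cubic focusing equation and compatible with the variational trapping of the first step.
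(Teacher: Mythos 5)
Your first three steps track the paper closely: the energy-trapping bootstrap from Proposition \ref{prop gn cnls} is exactly Lemma \ref{holmer variational}; the $H^1_{x,y}$-profile decomposition with bounded-scale versus large-scale profiles is Lemma \ref{linear profile}; the exclusion of large-scale profiles via the $\pi\mM(Q_{2d})\mapsto\tfrac12\mM(Q_{2d})$ mass bookkeeping and Theorem \ref{thm large scale resonant} is Lemma \ref{cnls lem large scale proxy}. Where you diverge is in the rigidity step, and there is a real gap in your plan there.

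You propose to rule out a bounded-scale critical element by transporting Dodson's long-time Strichartz estimate and the rapid-cascade/quasi-soliton dichotomy to $\R^2\times\T$. But this dichotomy is an artefact of mass-criticality: it is organised around a frequency scale function $N(t)$ that comes with a scaling symmetry of the critical element. On $\R^2\times\T$ the equation \eqref{cnls} is energy-subcritical and the profile decomposition of Lemma \ref{linear profile} only allows $\ld_\infty\in\{1,\infty\}$; after discarding the large-scale ($\ld_\infty=\infty$) profiles via Theorem \ref{thm large scale resonant}, the surviving critical element $U_c$ of Lemma \ref{category 0 and 1} is precompact in $H^1_{x,y}$ \emph{modulo $\R^2_x$-translations only}, with no scaling parameter. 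Consequently there is no $N(t)$, no rapid cascade, and no quasi-soliton on the waveguide side, and the heavy machinery you anticipate (Dodson potentials uniform in the $y$-frequencies, focusing-compatible monotonicity) is unnecessary. The paper instead uses the classical truncated virial argument of Section \ref{Extinction of the minimal blow-up solution}: the uniform coercivity \eqref{thres no energy} from Lemma \ref{holmer variational} gives $\pt_{tt}z_R(t)\geq c\,\|\nabla_x U_c(t)\|^2_{L^2_{x,y}}\geq 2\eta_1>0$ on the compact bulk, Lemma \ref{holmer} (momentum conservation plus almost periodicity) gives $x(t)=o(t)$, and comparing the growth of $\pt_t z_R$ with its a priori $O(R)$ bound yields the contradiction. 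Dodson-style arguments appear only in Section \ref{sec:Existence critical element resonant} for the resonant system \eqref{nls} on $\R^2$, which is genuinely mass-critical. So the obstacle you flag as the main difficulty is sidestepped entirely in the paper, and your proposed route for the rigidity step would require substantial and unnecessary new machinery.
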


Due to the energy-subcritical nature of \eqref{cnls} we also have the following global well-posedness (but not necessarily scattering) result for \eqref{cnls}, with the even weaker condition $\mM(U_0)\in(0,2\pi\mM(Q_{2d}))$ in place of \eqref{threshold1}.

\begin{theorem}\label{thm gwp}
Let the conditions in Theorem \ref{main thm}, up to \eqref{threshold1}, be retained, and let \eqref{threshold1} be replaced by the weaker condition
\begin{align}\label{weaker thres}
\mM(U_0)\in(0,2\pi\mM(Q_{2d})).
\end{align}
Then a solution $U$ of \eqref{cnls} with $U(0)=U_0$ is global.
\end{theorem}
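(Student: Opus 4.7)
Since \eqref{cnls} is $H^1$-subcritical, a standard contraction argument based on Strichartz estimates on the waveguide $\R^2\times\T$ yields local well-posedness in $H^1$ with a time of existence depending only on $\|U_0\|_{H^1}$; consequently, global existence reduces to producing a uniform a priori bound on $\|\nabla_{x,y}U(t)\|_{L^2}$ on the maximal interval of existence $[0,T_{\max})$ (mass is conserved, so the $L^2$-norm is controlled for free). My plan is to run a Holmer--Roudenko style variational bootstrap driven by Proposition \ref{prop gn cnls}.

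Write $m := \mM(U_0)$, $A := (\pi\mM(Q_{2d}))^{-1/4} m^{1/4}$, and for $u\in H^1(\R^2\times\T)$ set $B(u) := c_* m^{1/8} \|\nabla_y u\|_{L^2}^{1/4}$. Raising \eqref{r2t1gn} to the fourth power and using conservation of mass gives
\[
\tfrac{1}{2}\|U(t)\|_{L^4}^4 \leq \tfrac{1}{2}\|\nabla_x U(t)\|_{L^2}^2 \bigl(A + B(U(t))\bigr)^4,
\]
which combined with the energy identity $2\mH(U) = \|\nabla_x U\|_{L^2}^2 + \|\nabla_y U\|_{L^2}^2 - \tfrac{1}{2}\|U\|_{L^4}^4$ yields, at every $t\in[0,T_{\max})$,
\[
\|\nabla_x U(t)\|_{L^2}^2 \bigl(1 - \tfrac{1}{2}(A + B(U(t)))^4\bigr) + \|\nabla_y U(t)\|_{L^2}^2 \leq 2\mH(U_0).
\]
The crucial observation is purely algebraic: the definition \eqref{gamma def} of $\Gamma(m)$ is arranged exactly so that $B(u)^8 = c_*^8 m\,\|\nabla_y u\|_{L^2}^2$ is less than $(2^{1/4}-A)^8$ precisely when $\|\nabla_y u\|_{L^2}^2 < \Gamma(m)$, equivalently when $1 - \tfrac{1}{2}(A+B(u))^4 > 0$.

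With this equivalence at hand the bootstrap is essentially automatic. Let $I\subset[0,T_{\max})$ be the maximal connected interval containing $0$ on which $\|\nabla_y U(t)\|_{L^2}^2 < \Gamma(U_0)$; $I$ is non-empty by \eqref{threshold3}. For $t\in I$ the coefficient of $\|\nabla_x U(t)\|_{L^2}^2$ in the displayed inequality is strictly positive, so discarding that non-negative term gives
\[
\|\nabla_y U(t)\|_{L^2}^2 \leq 2\mH(U_0) < \Gamma(U_0),
\]
where the last strict inequality is exactly \eqref{threshold2}. Thus $\|\nabla_y U(t)\|_{L^2}^2$ stays strictly bounded away from $\Gamma(U_0)$ on $I$, and continuity of $t\mapsto\|\nabla_y U(t)\|_{L^2}^2$ forces $I=[0,T_{\max})$.

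To finish, the uniform estimate $\|\nabla_y U(t)\|_{L^2}^2\leq 2\mH(U_0)$ on $[0,T_{\max})$ translates into a uniform upper bound on $B(U(t))$, hence $1 - \tfrac{1}{2}(A+B(U(t)))^4 \geq \delta$ for some $\delta>0$ independent of $t$, so the displayed inequality also delivers $\|\nabla_x U(t)\|_{L^2}^2 \leq 2\mH(U_0)/\delta$. Combining the two bounds gives the required uniform $H^1$-control, ruling out finite-time blow-up via the blow-up alternative; a symmetric argument (or time-reversibility) treats $t<0$. The only non-routine step is recognising the algebraic matching built into the definition of $\Gamma$; once that is isolated, the rest is a short consequence of conservation of energy and the sharp Gagliardo--Nirenberg inequality of Proposition \ref{prop gn cnls}.
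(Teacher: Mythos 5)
Your proof is correct and takes essentially the same route as the paper, which simply delegates to the energy-trapping Lemma \ref{holmer variational}; your argument is an explicit unpacking of that lemma (same sharp Gagliardo--Nirenberg inequality from Proposition \ref{prop gn cnls}, conservation of energy, and a continuity bootstrap driven by the fact that $\Gamma(U_0)$ is the root of the coercivity coefficient). One minor remark worth making explicit: the equivalence at the heart of your ``crucial observation'' --- that $B(u)^8<(2^{1/4}-A)^8$ iff $1-\tfrac12(A+B(u))^4>0$ --- is valid only because $2^{1/4}-A>0$, which is exactly where the weaker mass threshold \eqref{weaker thres} is silently used.
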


\begin{proof}
The proof is short, thus we already record it at the beginning of the paper. Since \eqref{cnls} is energy-subcritical, it is well-known (see for instance \cite{Cazenave2003}) that global well-posedness is equivalent to the statement that for all $n\in\N$ we have
\begin{align}
\sup_{t\in[-n,n]}\|\nabla_{x,y}U(t)\|_{L^2(\R^2\times\T)}<\infty,
\end{align}
which follows immediately from Lemma \ref{holmer variational} below.
\end{proof}

After this work had been completed the author became aware that Cheng et al. independently studied the same problem in \cite{similar_cubic}. We point out that both of the works proved Proposition \ref{gagliardo nirenberg} and Theorem \ref{thm large scale resonant} (Theorem 1.1 in \cite{similar_cubic}). Nevertheless, the remaining topics of both papers have different emphases and are not covered by each other, which makes our contributions independent. We illustrate at this point more precisely to clarify the situation:
\begin{itemize}
\item[(i)] The large data scattering result for \eqref{cnls} formulated in \cite{similar_cubic} is valid for solutions with initial data $U(0)$ of the form $U(0)=\ld^{-1}U_0(\ld^{-1}x,y)$ for all $\ld\geq \ld(U_0)\gg 1$ (\cite[Thm. 1.1]{similar_cubic}). This result is in fact a direct consequence of Lemma \ref{cnls lem large scale proxy} (which essentially shares the same proof of \cite[Lem. 3.11]{CubicR2T1Scattering}) and only requires the initial data to be lying below ground state threshold (i.e. $\mM(U_0)<\pi \mM(Q_{2d})$), but at the same time also demands the initial data to take the somehow restricted form $\ld^{-1}U_0(\ld^{-1}x,y)$, where a quantitative description of the lower bound number $\ld(U_0)$ is not available. In Theorem \ref{main thm}, we formulated the thresholds with the additional (quantitative) constraints \eqref{threshold2} and \eqref{threshold3} but imposing no specific forms on the initial data. This scheme follows the same fashion as in the classical works \cite{KenigMerle2006,HolmerRadial,Dodson4dmassfocusing}. Moreover, in Theorem \ref{thm gwp} we also proved global well-posedness results beyond the threshold $\pi\mM(Q_{2d})$, which was not involved in \cite{similar_cubic}.

\item[(ii)] It was proved in \cite{similar_cubic} that the threshold in Proposition \ref{gagliardo nirenberg} is sharp in the sense that for any number $m>2^{-1}\mM(Q_{2d})$, finite time blow-up solutions $u$ of \eqref{nls} with $M_0(u)=m$ and $E(u)<0$ exist. The proof makes use of the classical Glassey's virial arguments \cite{Glassey1977}.

\item[(iii)] The authors of \cite{similar_cubic} also considered the $N$-coupled variant NLS-system of \eqref{nls} (namely only the components with (absolute) indices $\leq N$ are non-vanishing). This system arises as non-relativistic limit of the complex-valued cubic focusing nonlinear Klein-Gordon equation in $\R^2$, see for instance \cite{MasmoudiNakanishi2002}. Particularly, the authors showed the existence of ground states of the stationary $N$-coupled system (in general, however, we conjecture that there exists no optimizer for the infinite dimensional Weinstein problem \eqref{weinstein problem}). Moreover, the authors of \cite{similar_cubic} formulated a similar scattering threshold as the one given by Theorem \ref{thm large scale resonant} for the $N$-coupled system, which is strictly larger than $2^{-1}\mM(Q_{2d})$ for finite $N$.
\end{itemize}
The above mentioned differences hence reveal the independence of both papers, and we decide to keep the same contents of this paper in the following as before, without further modification according to \cite{similar_cubic}.

\subsection{Notation and definitions}
We use the notation $A\lesssim B$ whenever there exists some positive constant $C$ such that $A\leq CB$. Similarly we define $A\gtrsim B$ and we use $A\sim B$ when $A\lesssim B\lesssim A$. For simplicity, we ignore in most cases the dependence of the function spaces on their underlying domains and hide this dependence in their indices. For example $\ell^2 L_x^2=\ell^2(\Z,L^2(\R^2))$, $H_{x,y}^1= H^1(\R^2\times \T)$
and so on. However, when the space is involved with time, we still display the underlying time domain such as $\diag(I)$, $L_t^\infty L_x^2(\R)$ etc. The space $h^1 \dot{H}_x^s$ is defined through the norm
\begin{align*}
\|f\|^2_{h^1 \dot{H}_x^s}:=\sum_{j}\la j \ra^2\|f_j\|^2_{\dot{H}_x^s}
\end{align*}
for $f:\Z\to \dot{H}_x^s$. We denote by $g_{\xi_0,x_0,\ld_0}$ the $L_x^2$-symmetry transformation defined by
\begin{align*}
g_{\xi_0,x_0,\ld_0}f(x):=\ld_0^{-1}e^{i\xi_0\cdot x}f(\ld_0^{-1}(x-x_0))
\end{align*}
for $(\xi_0,x_0,\ld_0)\in\R^2\times\R^2\times(0,\infty)$. We define the Fourier transformation of a function $f$ w.r.t. $x\in\R^2$ or $y\in\T$ by
\begin{align*}
\hat{f}_y(x,k)&=\mathcal{F}_yf(x,k):=(2\pi)^{-\frac{1}{2}}\int_{\T}f(x,y)e^{-iky}\,dy,\\
\hat{f}_x(\xi,y)&=\mathcal{F}_xf(\xi,y):=(2\pi)^{-1}\int_{\R^2}f(x,y)e^{-i\xi\cdot x}\,dx,\\
\hat{f}_{\xi,k}(\xi)&=\mathcal{F}_{x,y}f(\xi,k):=(2\pi)^{-\frac{3}{2}}\int_{\R^2\times\T}f(x,y)e^{-i(\xi\cdot x+ky)}\,dxdy.
\end{align*}
Let $\phi\in C^\infty_c(\R^2)$ be a fixed radial, non-negative and radially decreasing function such that $\psi(x)=1$ if $|x|\leq 1$ and $\psi(x)=0$ for $|x|\geq \frac{11}{10}$. Then for $N>0$, we define the Littlewood-Paley projectors w.r.t $x$-variable by
\begin{align*}
P_{\leq N} f(x)&=\mathcal{F}_x^{-1}\bg(\phi\bg(\frac{\xi}{N}\bg)\hat{f}(\xi)\bg)(x),\\
P_{N} f(x)&=\mathcal{F}_x^{-1}\bg(\bg(\phi\bg(\frac{\xi}{N}\bg)-\phi\bg(\frac{2\xi}{N}\bg)\bg)\hat{f}(\xi)\bg)(x),\\
P_{> N} f(x)&=\mathcal{F}_x^{-1}\bg(\bg(1-\phi\bg(\frac{\xi}{N}\bg)\bg)\hat{f}(\xi)\bg)(x).
\end{align*}
We also record the following well-known Bernstein inequalities which will be frequently used throughout the paper: For all $s\geq 0$ and $1\leq p\leq\infty$ we have
\begin{align*}
\|P_{> N}f\|_{L^p}&\lesssim N^{-s}\||\nabla|^s P_{> N}f\|_{L^p},\\
\||\nabla|^s P_{\leq N}f\|_{L^p}&\lesssim N^{s}\| P_{\leq N}f\|_{L^p},\\
\||\nabla|^{\pm s} P_{ N}f\|_{L^p}&\sim N^{\pm s}\| P_{ N}f\|_{L^p},\\
\|P_{\leq N}f\|_{L^q}&\lesssim N^{\frac{2}{p}-\frac{2}{q}}\|P_{\leq N}f\|_{L^p},\\
\|P_{N}f\|_{L^q}&\lesssim N^{\frac{2}{p}-\frac{2}{q}}\|P_{N}f\|_{L^p}.
\end{align*}

Next we introduce the concept of an \textit{admissible} pair on $\R^d$. A pair $(q,r)$ is said to be $\dot{H}^s$-admissible if $q,r\in[2,\infty]$, $s\in[0,\frac d2)$, $\frac{2}{q}+\frac{d}{r}=\frac{d}{2}-s$ and $(q,d)\neq(2,2)$. For any $L^2$-admissible pairs $(q_1,r_1)$ and $(q_2,r_2)$ we have the following Strichartz estimate: if $u$ is a solution of
\begin{align*}
i\pt_t u+\Delta_x u=F
\end{align*}
in $I\subset\R$ with $t_0\in I$ and $u(t_0)=u_0$, then
\begin{align*}
\|u\|_{L_t^q L_x^r(I)}\lesssim \|u_0\|_{L_x^2}+\|F\|_{L_t^{q_2'} L_x^{r_2'}(I)},
\end{align*}
where $(q_2',r_2')$ is the H\"older conjugate of $(q_2,r_2)$. For a proof, we refer to \cite{EndpointStrichartz,Cazenave2003}. Combining with Minkowski's inequality, for a vector $u=(u_j)_{j\in\Z}$ satisfying
\begin{align*}
i\pt_t u_j+\Delta_x u_j=F_j\quad\forall j\in\Z
\end{align*}
we also have the Strichartz estimate
\begin{align*}
\|u\|_{L_t^q L_x^r\ell^2(I)}\lesssim \|u_0\|_{L_x^2\ell^2}+\|F\|_{L_t^{q_2'} L_x^{r_2'}\ell^2(I)}.
\end{align*}
For $d=2$, we define the spaces $S_0,S_1$ by
\begin{align*}
S_0:=L_t^\infty L_x^2\cap L^{2^{+}}_t L_x^{\infty-},\qquad
S_1:=L_t^\infty \dot{H}^1\cap L^{2^{+}}_t \dot{W}_x^{1,\infty-},
\end{align*}
where $(2^{+},\infty^-)$ is an $L^2$-admissible pair with some sufficiently small $2^+\in(2,\infty)$. In the following, an admissible pair is always referred to as an $L^2$-admissible pair if not otherwise specified.

Finally, we denote by $F(\vu_m)=-(\sum_{i}|u_i|^2+\sum_{i\neq j}|u_i|^2)u_m$ the nonlinear potential of \eqref{nls} for the component $\vu_m$. When the vector $w$ is given by $w=(w_m)_m=(P\vu_m)_m$, where $P$ is some frequency projector, then we similarly define $F(w_m)$ as $F(w_m)=-(\sum_{i}|w_i|^2+\sum_{i\neq j}|w_i|^2)w_m$.

\section{Scattering for the large scale resonant system}\label{sec:Existence critical element resonant}
\subsection{Proof of Proposition \ref{gagliardo nirenberg}}
We begin with the proof of Proposition \ref{gagliardo nirenberg}.
\begin{proof}[Proof of Proposition \ref{gagliardo nirenberg}]
Recall that $Q_{2d}$ is the unique radially symmetric and positive solution of
\begin{align}
-\Delta Q_{2d}+Q_{2d}=Q_{2d}^3\quad\text{on $\R^2$}.
\end{align}
It is well-known by Pohozaev's identity (see \cite{lions1}) that
\begin{align}
\mathrm{C}_{\mathrm{GN},2d}=\frac{\|Q_{2d}\|^2_{L_x^2}\|\nabla Q_{2d}\|^2_{L_x^2}}{\|Q_{2d}\|_{L_x^4}^4}.
\end{align}
We now set $\vu^n=Q_{2d}\sum_{|i|\leq n}e_i$. Then direct calculation yields
\begin{align}
\|\vu^n\|^2_{\ell^2 L_x^2}\|\nabla\vu^n\|^2_{\ell^2 L_x^2}&=(4n^2+4n+1)\|Q_{2d}\|_{L_x^2}^2\|\nabla Q_{2d}\|_{L_x^2}^2,\\
\int_{\R^2}\bg(\sum_i|\vu^n_i|^2\bg)^2+\sum_{j}\bg(\sum_{i\neq j}|\vu^n_i|^2\bg)|\vu_j|^2\,dx
&=(8n^2+6n+1)\|Q_{2d}\|_{L_x^4}^4.
\end{align}
Hence $\mathrm{C}_{\rm GN,rs}\leq \frac{4n^2+4n+1}{8n^2+6n+1}\mathrm{C}_{\mathrm{GN},2d}$. Sending $n\to\infty$ we obtain $\mathrm{C}_{\rm GN,rs}\leq \frac{1}{2}\mathrm{C}_{\mathrm{GN},2d}$. Using Minkowski, \eqref{standard gn ineq} and H\"older we obtain
\begin{align}
\int_{\R^2}\bg(\sum_{j}|\vu_j|^2\bg)^2\,dx\leq \bg(\sum_j\bg(\int_{\R^2}|\vu_j|^4\,dx\bg)^{\frac{1}{2}}\bg)^2
\leq \bg(\sum_j\mathrm{C}_{\mathrm{GN},2d}^{-\frac{1}{2}}\|\vu_j\|_2\|\nabla\vu_j\|_2\bg)^2
\leq \mathrm{C}_{\mathrm{GN},2d}^{-1}\|\vu\|^2_{\ell^2 L_x^2}\|\nabla\vu\|^2_{\ell^2 L_x^2}.
\end{align}
The desired inequality then follows from the rough estimate
\begin{align}
\int_{\R^2}\bg(\sum_i|\vu_i|^2\bg)^2+\sum_{j}\bg(\sum_{i\neq j}|\vu_i|^2\bg)|\vu_j|^2\,dx
\leq 2\int_{\R^2}\bg(\sum_{j}|\vu_j|^2\bg)^2\,dx.
\end{align}
\end{proof}

\subsection{Existence of a minimal blow-up solution for the large scale resonant system}
Next, we establish a result concerning the existence of a minimal blow-up solution of \eqref{nls} when assuming that Theorem \ref{thm large scale resonant} does not hold. The proof is almost identical to \cite[Thm. 3.3]{Yang_Zhao_2018}, where we only need to add the additional mass constraint to the inductive hypothesis, thus we omit the details here.

\begin{theorem}[Existence of a minimal blow-up solution]\label{thm existence critical element}
Suppose that Theorem \ref{thm large scale resonant} does not hold. Then there exists a solution $\vu_c$ of \eqref{nls} such that $M_0(\vu_c)<\frac{1}{2}\mM(Q_{2d})$ and
\begin{align}\label{infinite scattering norm main thm}
\|\vu_c\|_{\diag\ell^2((\inf I_{\max},0])}=\|\vu_c\|_{\diag\ell^2([0,\sup I_{\max}))}=\infty.
\end{align}
Moreover, the set $\{\vu_c(t):t\in I_{\max}\}$ is precompact in $h^1 L_x^2$ modulo $L_x^2$-symmetries.
\end{theorem}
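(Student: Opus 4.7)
The strategy is the standard Kenig--Merle concentration--compactness scheme, with the mass bound $M_0(\vu)<\tfrac12\mM(Q_{2d})$ added to the induction hypothesis. First, I would suppose Theorem \ref{thm large scale resonant} fails, so there exist initial data below the mass threshold whose solutions have infinite scattering norm. Define the critical level
\[
m_c:=\inf\bigl\{M_0(\vu_0):\vu_0\in h^1L_x^2,\ M_0(\vu_0)<\tfrac12\mM(Q_{2d}),\ \|\vu\|_{\diag\ell^2(I_{\max})}=\infty\bigr\}.
\]
By definition $m_c<\tfrac12\mM(Q_{2d})$, and by a small-data argument (using Strichartz together with Proposition \ref{gagliardo nirenberg} to control the nonlinearity when the mass is sufficiently small) we have $m_c>0$. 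The goal is then to produce a solution $\vu_c$ achieving this infimum and enjoying the precompactness stated.

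Pick a sequence $\vu^n_0\in h^1L_x^2$ with $M_0(\vu^n_0)\searrow m_c$ whose associated solutions $\vu^n$ have $\|\vu^n\|_{\diag\ell^2}=\infty$. The key tool is the vector-valued linear profile decomposition in $h^1L_x^2$ (Keraani-type, as in \cite{Yang_Zhao_2018}): after extracting a subsequence,
\[
\vu^n_0=\sum_{j=1}^J g_{\xi_n^j,x_n^j,\lambda_n^j}\bigl(e^{it_n^j\Delta_x}\phi^j\bigr)+W_n^J,
\]
with pairwise orthogonal frame parameters, Pythagorean mass decoupling $M_0(\vu^n_0)=\sum_j M_0(\phi^j)+M_0(W_n^J)+o(1)$, and $\limsup_n \|e^{it\Delta_x}W_n^J\|_{\diag\ell^2}\to 0$ as $J\to\infty$. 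Each profile $\phi^j$ satisfies $M_0(\phi^j)\le m_c$, and I would associate to it a nonlinear profile $v^j$ (either the solution starting from $\phi^j$ at time $0$, or the scattering solution asymptotic to $e^{it\Delta_x}\phi^j$ at $\pm\infty$ when $t_n^j\to\pm\infty$). If every profile had $M_0(\phi^j)<m_c$, then by the induction hypothesis each $v^j$ would scatter with controlled $\diag\ell^2$ norm, and combining these nonlinear profiles with the small remainder $W_n^J$ via the stability/perturbation theorem for \eqref{nls} (identical to the defocusing case except that signs do not enter the estimates) would contradict $\|\vu^n\|_{\diag\ell^2}=\infty$. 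Hence exactly one profile remains with $M_0(\phi^1)=m_c$ and all other profiles, as well as $W_n^J$, vanish; the associated nonlinear profile $\vu_c$ is the required minimal blow-up solution, and after absorbing the frame parameters into $L_x^2$-symmetries we may assume $\vu_c(0)=\phi^1$.

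The two-sided infinite scattering norm \eqref{infinite scattering norm main thm} follows because $\vu_c$ still lies at the critical level, so neither tail of its trajectory can be finite (otherwise one could run the small-data/stability argument from a late/early time). For the precompactness of $\{\vu_c(t):t\in I_{\max}\}$ modulo $L_x^2$-symmetries, I would repeat the above procedure on an arbitrary sequence $\vu_c(t_n)$: again a single profile must survive (by the same minimality argument), which shows that after suitable $g_{\xi_n,x_n,\lambda_n}$-shifts, $\vu_c(t_n)$ converges in $h^1L_x^2$ along a subsequence. The mass constraint propagates through the profile decomposition via Pythagorean decoupling of $M_0$, which is the only new ingredient compared with \cite[Thm.~3.3]{Yang_Zhao_2018}; every other step carries over verbatim.

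The main obstacle, and the reason one can cite \cite{Yang_Zhao_2018} wholesale, is establishing the vector-valued linear profile decomposition and the stability lemma in $h^1L_x^2$ for the system \eqref{nls}. These are nontrivial but already available in the defocusing treatment and insensitive to the sign of the nonlinearity; the focusing character only enters through the Gagliardo--Nirenberg threshold provided by Proposition \ref{gagliardo nirenberg}, which ensures that below $\tfrac12\mM(Q_{2d})$ the energy remains coercive and the small-data theory is applicable, thereby allowing the inductive hypothesis to start.
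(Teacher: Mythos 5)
Your proposal is correct and follows essentially the same route the paper takes: the paper itself declines to write the argument out, stating only that the proof is ``almost identical to \cite[Thm.~3.3]{Yang_Zhao_2018}'' with the mass constraint $M_0<\tfrac12\mM(Q_{2d})$ added to the inductive hypothesis, which is precisely the Kenig--Merle/Keraani scheme (critical mass $m_c$, vector-valued $h^1L_x^2$ profile decomposition with Pythagorean decoupling, nonlinear profiles plus stability, reduction to a single surviving profile) that you spell out. Your observation that $\sum_j M_0(\phi^j)\le m_c<\tfrac12\mM(Q_{2d})$ forces every profile to sit below the variational threshold is exactly the ``one new ingredient'' the author is alluding to, so nothing is missing.
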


\subsection{Properties of the almost periodic solution}
In this subsection we collect some useful properties of the minimal blow-up solution $\vu_c$.

\begin{lemma}[Arzela-Ascoli characterization of $h^1L^2$-compactness, \cite{Yang_Zhao_2018}]\label{arzela ascoli}
Let $\vu$ be an almost periodic solution of \eqref{nls}. Then there exist functions $x:I\to \R^2$, $\xi:I\to\R^2$, $C:(0,\infty)\to (0,\infty)$ and $N:I\to (0,\infty)$ such that for any $\eta>0$ and any $t\in I$ we have
\begin{align}\label{almost periodic smallness}
\sum_j\la j\ra^2\int_{|x-x(t)|\geq \frac{C(\eta)}{N(t)}}|\vu_j(t,x)|^2\,dx
+\sum_j\la j\ra^2\int_{|\xi-\xi(t)|\geq C(\eta)N(t)}|\hat{\vu}_j(t,\xi)|^2\,d\xi<\eta^2.
\end{align}
\end{lemma}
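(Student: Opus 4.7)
The statement is an Arzela-Ascoli-style characterization: precompactness of $\{\vu(t):t\in I\}$ in $h^1L_x^2$ modulo the $L_x^2$-symmetries $g_{\xi_0,x_0,\ld_0}$ is to be unpacked into uniform physical- and Fourier-space tail bounds. The plan is to (i) select the parameter functions $(\xi(t),x(t),N(t))$ using the definition of precompactness, (ii) apply a standard Riesz-Kolmogorov-type characterization for relatively compact subsets of $h^1L_x^2$, and (iii) transfer the resulting tail bounds back to $\vu$ via explicit change of variables.

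First, by the definition of precompactness modulo $L_x^2$-symmetries one selects $(\xi_0(t),x_0(t),\ld_0(t))\in\R^2\times\R^2\times(0,\infty)$ such that the rescaled orbit $\{v(t):=g_{\xi_0(t),x_0(t),\ld_0(t)}^{-1}\vu(t):t\in I\}$ has compact closure in $h^1L_x^2$; here one uses that $g_{\xi_0,x_0,\ld_0}$ acts only on the $x$-variable and preserves the $L_x^2$-norm of each component $\vu_j$, so that $\|v(t)\|_{h^1L_x^2}=\|\vu(t)\|_{h^1L_x^2}$. One then sets $N(t):=\ld_0(t)^{-1}$, $x(t):=x_0(t)$, $\xi(t):=\xi_0(t)$.

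Second, I would establish the standard fact that a relatively compact subset $\mathcal{K}\subset h^1L_x^2$ admits, for every $\eta>0$, a constant $C(\eta)>0$ with
\begin{align*}
\sup_{f\in\mathcal{K}}\sum_j\la j\ra^2\int_{|y|\geq C(\eta)}|f_j(y)|^2\,dy+\sup_{f\in\mathcal{K}}\sum_j\la j\ra^2\int_{|\zeta|\geq C(\eta)}|\hat{f}_j(\zeta)|^2\,d\zeta<\tfrac{\eta^2}{2}.
\end{align*}
By total boundedness, $\mathcal{K}$ is covered by finitely many $h^1L_x^2$-balls of radius $\eta/4$ centered at $f_1,\ldots,f_K$; since each $f_k$ lies in $h^1L_x^2$, its weighted physical and Fourier tails vanish as the cut-off radius diverges, so one may pick a common $C(\eta)$ so that each $f_k$'s tails outside balls of radius $C(\eta)$ are bounded by $\eta^2/16$, and the triangle inequality extends this uniformly to $\mathcal{K}$.

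Third, I would transfer these bounds on $v(t)$ back to $\vu(t)$ by computing that $v_j(t,y)=\ld_0(t)e^{-i\xi_0(t)\cdot(\ld_0(t)y+x_0(t))}\vu_j(t,\ld_0(t)y+x_0(t))$, so the substitution $x=\ld_0(t)y+x_0(t)$ yields
\begin{align*}
\int_{|y|\geq C(\eta)}|v_j(t,y)|^2\,dy=\int_{|x-x(t)|\geq C(\eta)/N(t)}|\vu_j(t,x)|^2\,dx,
\end{align*}
while an analogous Fourier-side calculation shows $|\hat{v}_j(t,\zeta)|=\ld_0(t)^{-1}|\hat{\vu}_j(t,\xi_0(t)+\ld_0(t)^{-1}\zeta)|$, whence the substitution $\xi=\xi_0(t)+\ld_0(t)^{-1}\zeta$ gives
\begin{align*}
\int_{|\zeta|\geq C(\eta)}|\hat{v}_j(t,\zeta)|^2\,d\zeta=\int_{|\xi-\xi(t)|\geq C(\eta)N(t)}|\hat{\vu}_j(t,\xi)|^2\,d\xi.
\end{align*}
Summing over $j$ with the weight $\la j\ra^2$ and adding the two estimates delivers \eqref{almost periodic smallness}. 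The only real obstacle is bookkeeping: verifying that the single convention $N(t)=\ld_0(t)^{-1}$ correctly produces both the physical-scale radius $C(\eta)/N(t)$ and the Fourier-scale radius $C(\eta)N(t)$, which simply reflects the reciprocal behaviour of physical and Fourier scales under the scaling symmetry and is consistent with the formulation cited from \cite{Yang_Zhao_2018}.
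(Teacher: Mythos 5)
Your proof is correct and follows the standard route; since the paper does not re-prove this lemma but imports it from \cite{Yang_Zhao_2018}, your argument (total boundedness in $h^1L_x^2$ gives uniform weighted physical- and Fourier-tails for the modulated orbit, transferred back to $\vu$ by the change of variables induced by $g_{\xi_0,x_0,\ld_0}$ with $N(t)=\ld_0(t)^{-1}$) is essentially the canonical proof cited there, and the reciprocal physical/Fourier scaling bookkeeping checks out.
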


\begin{lemma}[Normalisation of the symmetry functions, \cite{killip_tao_visan_2d_mass_critical}]
We may additionally assume that the minimal blow-up solution $\vu_c$ deduced from Theorem \ref{thm existence critical element} satisfies the following:
\begin{itemize}
\item[(i)] The maximal interval $I$ contains at least $[0,\infty)$.
\item[(ii)]We have $\|\vu_c\|_{\diag\ell^2([0,\infty))}=\infty$.
\item[(iii)]The functions $x,\xi,N$ can be chosen such that $x(0)=\xi(0)=0$, $N(0)=1$ and $N(t)\leq 1$ for all $t\in[0,\infty)$.
\end{itemize}
\end{lemma}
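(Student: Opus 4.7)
The plan is to derive (i) from a subthreshold coercive energy estimate, obtain (ii) as an immediate consequence of (i) and Theorem \ref{thm existence critical element}, and establish (iii) by a two-stage normalization: first an $L_x^2$-symmetry at $t = 0$ to pin down $x(0),\xi(0),N(0)$, then a time translation together with a compactness extraction to force $N(t) \leq 1$ on $[0,\infty)$. Throughout I use that the scaling, spatial translation, and Galilean invariances of \eqref{nls} preserve both minimality and almost periodicity, so replacing $\vu_c$ by its image under such symmetries remains legitimate.

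For (i), I would argue as follows. Since $M_0(\vu_c) < \tfrac12\mM(Q_{2d}) = 2\mathrm{C}_{\mathrm{GN,rs}}$, Proposition \ref{gagliardo nirenberg} gives
\begin{align*}
E(\vu_c) \geq \tfrac12 \|\nabla_x\vu_c\|_{\ell^2 L_x^2}^2\bigl(1 - \tfrac{M_0(\vu_c)}{2\mathrm{C}_{\mathrm{GN,rs}}}\bigr) =: c\,\|\nabla_x\vu_c\|_{\ell^2 L_x^2}^2, \qquad c > 0.
\end{align*}
Combined with conservation of mass and energy, this yields a uniform bound on $\|\vu_c(t)\|_{h^1 H_x^1}$ over the full maximal interval, so the blow-up alternative for \eqref{nls} rules out finite-time breakdown; hence $\sup I_{\max} = \infty$, proving (i), and (ii) then follows at once from \eqref{infinite scattering norm main thm}. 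To begin (iii), applying the $L_x^2$-symmetry $g_{-\xi(0),-x(0),1/N(0)}$ componentwise to $\vu_c(0)$ and re-evolving produces a new minimal blow-up solution whose parameter functions satisfy $x(0) = \xi(0) = 0$ and $N(0) = 1$.

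The hard part will be enforcing $N(t) \leq 1$ for $t \geq 0$. My first move is to show $M := \sup_{t \geq 0} N(t) < \infty$: Lemma \ref{arzela ascoli} forces the bulk of $\|\nabla_x \vu_c(t)\|_{\ell^2 L_x^2}$ to live at frequencies comparable to $N(t)$ (after Galilean modulation by $\xi(t)$), which together with the uniform $\ell^2 H_x^1$-bound yields $N(t) \lesssim 1$. Picking $t_n \geq 0$ with $N(t_n) \to M$, I would consider the solutions $\vu_c^{(n)}$ of \eqref{nls} evolved from the $L_x^2$-symmetry-renormalized initial data obtained from $\vu_c(t_n,\cdot)$ by rescaling $N(t_n) \to 1$, translating $x(t_n) \to 0$, and modulating $\xi(t_n) \to 0$. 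By construction $N^{(n)}(0) = 1$ and $N^{(n)}(t) \leq M/N(t_n) \to 1$ for all $t \geq 0$. Precompactness of $\{\vu_c^{(n)}(0)\}$ in $h^1 L_x^2$ allows me to extract a subsequential limit $\vu_c^\infty(0)$, and stability for \eqref{nls} lifts this to solution-level convergence on compact time intervals. A contradiction argument---if $\vu_c^\infty$ had finite $\diag\ell^2([0,\infty))$ norm, stability would transfer this to $\vu_c^{(n)}$ for large $n$, contradicting $\|\vu_c\|_{\diag\ell^2([t_n,\infty))} = \infty$---confirms that $\vu_c^\infty$ is again a minimal blow-up solution, now satisfying $N^\infty(0) = 1$ and $N^\infty(t) \leq 1$ on $[0,\infty)$. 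The main obstacle I foresee is precisely this last extraction: guaranteeing that minimality transfers to $\vu_c^\infty$ and that $N^\infty$ inherits the pointwise bound at every $t \geq 0$ (rather than merely almost everywhere) requires a careful interplay between the stability theory for \eqref{nls} and the freedom in the choice of $(x,\xi,N)$ attached to an almost periodic solution.
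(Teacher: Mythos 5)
The paper cites this lemma from \cite{killip_tao_visan_2d_mass_critical} and gives no proof of its own, so the comparison is between your argument and the standard one. Your route has a gap that unfortunately runs through both (i) and (iii).

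For (i), you invoke conservation of energy to convert the mass constraint $M_0(\vu_c) < 2\mathrm{C}_{\mathrm{GN,rs}}$ into a uniform $\ell^2\dot H^1_x$ bound. But the minimal blow-up solution delivered by Theorem \ref{thm existence critical element} lives only in $h^1 L_x^2$ and is precompact modulo $L_x^2$-symmetries; there is no a priori control of $\|\nabla_x \vu_c\|_{\ell^2 L_x^2}$, so $E(\vu_c)$ may be $+\infty$ and energy conservation is not available. This is not a technicality: the problem is $L_x^2$-critical in the $\R^2$ directions, and the entire concentration-compactness apparatus (and the cited Theorem 3.3 of \cite{Yang_Zhao_2018}) is set at the $h^1 L_x^2$ level. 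The paper itself only uses energy conservation for $\vu_c$ \emph{after} the extra regularity $\vu_c \in L_t^\infty \ell^2\dot H_x^2$ has been independently established in Lemma \ref{lem higher reg}, and that lemma takes $\int_0^\infty N(t)^3\,dt < \infty$ as a hypothesis; you cannot run the energy argument before the normalisation is in place. Note also that (i) only asserts $[0,\infty)\subset I_{\max}$, which is weaker than globality; a priori a self-similar-type scenario with $I_{\max}=(-1,\infty)$ is not excluded by mass alone.

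The same gap reappears in (iii): your step establishing $M=\sup_{t\geq 0} N(t)<\infty$ explicitly leans on "the uniform $\ell^2 H_x^1$-bound," which is the unavailable energy bound. The correct argument, as in \cite{killip_tao_visan_2d_mass_critical}, is purely a compactness and time-reversal argument at the $L_x^2$ level: one combines the precompactness of the orbit in $h^1 L_x^2$ modulo $L_x^2$-symmetries with the local constancy of $N$ (Lemma \ref{lem local constancy}) and the blow-up of the scattering norm in both time directions (Theorem \ref{thm existence critical element}), chooses times where $N$ is nearly extremal, renormalises by the $L_x^2$-symmetry group, extracts a subsequential $h^1L_x^2$-limit via Arzel\`a--Ascoli, and uses stability to evolve it. The limiting solution remains minimal (by lower semicontinuity of $M_0$), the renormalised frequency scale converges after passing to a compatible choice of symmetry parameters, and the configuration $N(0)=1$, $N(t)\leq 1$ on $[0,\infty)$ is enforced by the choice of extremising sequence; no Gagliardo--Nirenberg or energy coercivity is used. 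Your closing extraction argument is in the right spirit, but without repairing the finiteness-of-energy issue neither (i) nor the boundedness input to (iii) is obtained.
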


\begin{lemma}[Local constancy of $N(t)$, \cite{killip_tao_visan_2d_mass_critical}]\label{lem local constancy}
Let $\vu:I\times\R^d\to\C$ be a non-zero maximal-lifespan solution of \eqref{nls} that is almost periodic modulo symmetries and has the frequency scale function $N$. Then there exists a small $\delta=\delta(u)>0$ such that for every $t_0\in I$ we have
\begin{align}
[t_0-\delta N(t_0)^{-2},t_0+\delta N(t_0)^{-2}]\subset I.
\end{align}
Moreover, $N(t)\sim_u N(t_0)$ whenever $|t-t_0|\leq \delta N(t_0)^{-2}$.
\end{lemma}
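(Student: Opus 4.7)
The plan is to adapt the standard rescaling-and-compactness argument from \cite{killip_tao_visan_2d_mass_critical} to the vector-valued setting of \eqref{nls}. Fix $t_0\in I$ and introduce the renormalized solution
\begin{align}
\vv(s,x) := N(t_0)^{-1}\, e^{-i\xi(t_0)\cdot N(t_0)^{-1}x}\, \vu\bigl(t_0+s N(t_0)^{-2},\, x(t_0)+N(t_0)^{-1}x\bigr),
\end{align}
which, by the $L_x^2$-symmetries of \eqref{nls}, is again a componentwise solution whose almost-periodicity parameters satisfy $\tilde x(0)=\tilde\xi(0)=0$ and $\tilde N(0)=1$.

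The first step is to show that the family $\{\vv(0)\}_{t_0\in I}$ is precompact in $h^1L_x^2$: after the rescaling, \eqref{almost periodic smallness} translates into a single uniform physical- and frequency-space decay estimate on the family, while the $h^1L_x^2$-norm of $\vv(0)$ equals the conserved weighted mass $M_1(\vu_c)^{1/2}$ (since the transformation $v\mapsto \lambda^{-1}v(\lambda^{-1}\cdot)$ is $L^2$-isometric in $d=2$), hence uniformly bounded and, because $\vu_c\not\equiv 0$, uniformly bounded below. Feeding this precompact set into the standard $h^1L_x^2$-local well-posedness theory for \eqref{nls} (a contraction argument in $\diag\ell^2$ via Strichartz) then produces a uniform existence time $\delta=\delta(\vu)>0$ and a uniform bound $\|\vv\|_{\diag\ell^2([-\delta,\delta])}\lesssim_{\vu}1$. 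Undoing the rescaling gives the claimed inclusion $[t_0-\delta N(t_0)^{-2},\, t_0+\delta N(t_0)^{-2}]\subset I$.

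For the comparability $N(t)\sim_{\vu}N(t_0)$ on this interval I would argue by contradiction. Suppose there exist sequences $t_0^{(n)}\in I$ and $s_n\in[-\delta,\delta]$ with
\begin{align}
\rho_n := \frac{N\bigl(t_0^{(n)}+s_n N(t_0^{(n)})^{-2}\bigr)}{N(t_0^{(n)})} \ \longrightarrow\ 0\ \text{ or }\ \infty.
\end{align}
Extracting a subsequence, the precompactness above yields $L_x^2$-symmetries $g_{\xi_n,x_n,\lambda_n}$ with $\lambda_n\sim 1$ such that $g_{\xi_n,x_n,\lambda_n}^{-1}\vv_n(0)\to V_0$ strongly in $h^1L_x^2$ for some $V_0\neq 0$. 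Continuous dependence on initial data, combined with the uniform Strichartz bound above, then forces $g_{\xi_n,x_n,\lambda_n}^{-1}\vv_n(s_n)\to V(s_*)$ in $h^1L_x^2$ (along a further subsequence, for some $s_*\in[-\delta,\delta]$), where $V$ is the $h^1L_x^2$-solution evolving from $V_0$. However, the frequency scale of the left-hand side is comparable to $\rho_n$, whereas the non-zero limit $V(s_*)$ has a finite positive frequency scale, contradicting $\rho_n\to 0$ or $\infty$.

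The main obstacle is bookkeeping: one must carefully track the interplay between the three symmetry parameters $(x(t),\xi(t),N(t))$ of $\vu$, the rescaling already built into $\vv$, and the additional symmetries $(\xi_n,x_n,\lambda_n)$ extracted in the compactness step; in particular verifying that $\lambda_n$ stays bounded, which relies on $\vv_n(0)$ being already normalized to unit frequency scale via Lemma \ref{arzela ascoli}. Apart from this, the proof is a verbatim vector-valued adaptation of \cite{killip_tao_visan_2d_mass_critical}, requiring no new ingredient beyond the $h^1L_x^2$-local theory already used to produce $\vu_c$ in Theorem \ref{thm existence critical element}.
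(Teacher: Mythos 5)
Your reconstruction follows the standard rescaling-and-compactness argument from \cite{killip_tao_visan_2d_mass_critical}, which is exactly what the paper implicitly invokes: the paper supplies no proof of this lemma and simply cites that reference, relying on the fact that the Euclidean argument carries over verbatim to the vector-valued setting once one has the $h^1L_x^2$ local theory, the conserved weighted mass $M_1$, and the compactness characterization of Lemma \ref{arzela ascoli}. One small point to tidy up: for $\xi(t_0)\neq 0$ the renormalization you wrote is not a solution of \eqref{nls} at $s\neq 0$; to make $\vv$ an honest solution one must use the full Galilean transformation \eqref{vector}, i.e.\ include the time-dependent phase $e^{-isN(t_0)^{-2}|\xi(t_0)|^2}$ and the drift $x\mapsto x-2\xi(t_0)N(t_0)^{-2}s$ in the spatial argument. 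This is a routine fix that does not affect the rest of the argument, and the remaining steps (precompactness of the renormalized initial data in $h^1L_x^2$, the uniform existence time, and the contradiction argument for the comparability of $N$) are sound.
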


\begin{lemma}[Spacetime bound, \cite{killip_tao_visan_2d_mass_critical}]\label{local constancy lemma}
Let $\vu:I\times\R^d\to\C$ be a non-zero maximal-lifespan solution of \eqref{nls} that is almost periodic modulo symmetries and has the frequency scale function $N$. Let $J$ be any subinterval of $I$. Then
\begin{align}\label{local constancy -1}
\int_J N^2(t)\,dt\lesssim \|\vu\|^4_{\diag \ell^2(J)}\lesssim 1+\int_J N^2(t)\,dt.
\end{align}
\end{lemma}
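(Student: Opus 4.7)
The plan is to prove the two sides of \eqref{local constancy -1} separately; both rely on combining the almost periodicity of $\vu$ (Lemma \ref{arzela ascoli}) with the local constancy of the frequency scale function $N$ (Lemma \ref{lem local constancy}). Throughout, by conservation of mass $M_0(\vu)$ is a finite positive constant on $I$.

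\textbf{Lower bound.} For $\int_J N^2(t)\,dt\lesssim \|\vu\|^4_{\diag\ell^2(J)}$, I plan to establish the pointwise inequality $\|\vu(t)\|^4_{L^4_x\ell^2}\gtrsim N(t)^2$ for every $t\in I$ and then integrate in time. Pick $\eta_0>0$ so small that $\eta_0^2<M_0(\vu)/2$, and apply Lemma \ref{arzela ascoli} with $\eta=\eta_0$: on the ball $B_t:=B(x(t),C(\eta_0)/N(t))$ of volume $\sim N(t)^{-2}$ we then have $\int_{B_t}\sum_j|u_j(t,x)|^2\,dx\geq M_0(\vu)-\eta_0^2\gtrsim 1$. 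By Cauchy--Schwarz,
$$\|\vu(t)\|^4_{L^4_x\ell^2}=\int_{\R^2}\bg(\sum_j|u_j(t,x)|^2\bg)^2 dx \geq |B_t|^{-1}\bg(\int_{B_t}\sum_j|u_j|^2\,dx\bg)^2\gtrsim N(t)^2,$$
and integration over $t\in J$ closes this direction.

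\textbf{Upper bound.} For $\|\vu\|^4_{\diag\ell^2(J)}\lesssim 1+\int_J N^2(t)\,dt$, I would run the standard partition argument. Fix a small constant $\eta_1>0$; if $\|\vu\|^4_{\diag\ell^2(J)}\leq\eta_1$ the claim is trivial, otherwise split $J$ into $K$ consecutive subintervals $J_1,\dots,J_K$ with $\|\vu\|^4_{\diag\ell^2(J_k)}=\eta_1$ for $k<K$ and $\|\vu\|^4_{\diag\ell^2(J_K)}\leq\eta_1$. It suffices to show $\int_{J_k}N^2(t)\,dt\gtrsim 1$ for each $k<K$, since then $\|\vu\|^4_{\diag\ell^2(J)}\leq\eta_1 K\lesssim 1+\int_J N^2(t)\,dt$. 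I argue by contradiction: if $\int_{J_k}N^2(t)\,dt\ll 1$, then the mean value yields $t_k\in J_k$ with $N(t_k)\ll |J_k|^{-1/2}$, and Lemma \ref{lem local constancy} propagates $N(t)\sim N(t_k)$ to all of $J_k$. Decomposing $\vu=\vu_{\leq CN(t_k)}+\vu_{>CN(t_k)}$ with $C=C(\eta_0)$, Bernstein gives $\|\vu_{\leq CN(t_k)}(t)\|_{L^4_x\ell^2}\lesssim N(t_k)^{1/2}M_0(\vu)^{1/2}$, so
$$\|\vu_{\leq CN(t_k)}\|^4_{\diag\ell^2(J_k)}\lesssim N(t_k)^2|J_k|M_0(\vu)^2\sim M_0(\vu)^2\int_{J_k}N^2(t)\,dt\ll 1.$$
For the high-frequency remainder, Lemma \ref{arzela ascoli} gives $\|\vu_{>CN(t_k)}(t)\|_{L^2_x\ell^2}\leq\eta_0$ on $J_k$, and a small-data Strichartz perturbation argument based on $\|\vu\|_{\diag\ell^2(J_k)}\lesssim\|\vu(t_k)\|_{L^2_x\ell^2}+\|F(\vu)\|_{L^{4/3}_{t,x}\ell^2(J_k)}$ combined with the cubic structure yields $\|\vu_{>CN(t_k)}\|^4_{\diag\ell^2(J_k)}\lesssim\eta_0^4$, plus lower-order cross terms controlled by the already-small low-frequency piece. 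Choosing $\eta_0,\eta_1$ sufficiently small contradicts $\|\vu\|^4_{\diag\ell^2(J_k)}=\eta_1$.

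The main technical obstacle is controlling the high-frequency remainder in the upper bound: the cubic vector-valued nonlinearity $F(\vu_m)=-(\sum_i|u_i|^2+\sum_{i\neq m}|u_i|^2)u_m$ couples all Fourier modes, so the perturbation must be closed in the $\ell^2$-valued Strichartz spaces and the cross terms between $\vu_{\leq CN(t_k)}$ and $\vu_{>CN(t_k)}$ must be absorbed carefully. Once phrased in those norms the cubic structure matches the classical scalar 2D situation of \cite{killip_tao_visan_2d_mass_critical}, so no essentially new ideas are required beyond that reference.
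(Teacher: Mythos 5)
The paper does not reprove this lemma: it states it as a citation to \cite{killip_tao_visan_2d_mass_critical}, and your argument is precisely the standard Killip--Tao--Visan proof transported to the vector-valued setting, with $L^4_x\ell^2$ and $M_0$ playing the roles of $L^4_x$ and the mass. Both halves are correct: the lower bound via mass concentration on $B(x(t),C(\eta_0)/N(t))$ and Cauchy--Schwarz is exactly the cited argument, and the upper bound via the unit-$L^4_{t,x}\ell^2$ partition, the mean-value/local-constancy reduction to $N(t)\sim N(t_k)$ on $J_k$, Bernstein on the low frequencies, and a small-data Strichartz perturbation for the high frequencies is the intended route (with the order of smallness $\eta_1$ first, then $\eta_0$, then $\int_{J_k}N^2$, as you implicitly set up). The one cosmetic point worth making explicit is that the Littlewood--Paley cut should be centred at $\xi(t_k)$ rather than the origin, which the almost-periodicity controls on $J_k$; this is standard and does not affect the conclusion.
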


The following result is an immediate consequence of Lemma \ref{lem local constancy} and Lemma \ref{local constancy lemma}.
\begin{corollary}
Suppose that for some interval $J$ we have $\|\vu\|_{\diag \ell^2 (J)}=1$. Then $\sup_{t\in J}N(J)\lesssim_{\vu} \inf_{t\in J}N(t)$. Moreover, if a time interval $J$ can be partitioned into consecutive intervals $J=\cup J_{\ell}$ with $\|\vu\|_{\diag\ell^2(J_{\ell})}=1$, then $\sum_{J_{\ell}}\sup_{t\in J_\ell}N(t)\sim_{\vu}\int_J N(t)^3\,dt$.
\end{corollary}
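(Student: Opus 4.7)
The corollary is a standard consequence of chaining the local-constancy from Lemma \ref{lem local constancy} against the spacetime bound from Lemma \ref{local constancy lemma}, plus one short-time scaling argument for a lower time-length bound. For the \emph{first assertion}, fix $J = [a,b]$ with $\|\vu_c\|_{\diag\ell^2(J)} = 1$, let $\delta = \delta(\vu_c) > 0$ come from Lemma \ref{lem local constancy}, and define inductively $a_0 = a$ and $a_{k+1} = a_k + \delta N(a_k)^{-2}$, stopping at the first step that would exit $J$; this yields some $K$ consecutive subintervals. Lemma \ref{lem local constancy} gives $N(t) \sim_{\vu_c} N(a_k)$ on each $[a_k,a_{k+1}]$, and in particular $N(a_{k+1}) \sim_{\vu_c} N(a_k)$ with a fixed constant. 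At the same time,
\begin{align*}
K\delta \;\sim_{\vu_c}\; \sum_{k=0}^{K-1}\int_{a_k}^{a_{k+1}} N^2(t)\,dt \;\le\; \int_J N^2(t)\,dt \;\lesssim_{\vu_c}\; \|\vu_c\|_{\diag\ell^2(J)}^4 \;=\; 1
\end{align*}
by the upper bound in Lemma \ref{local constancy lemma}, so $K \lesssim_{\vu_c} 1$; chaining the comparabilities across these boundedly many steps yields $N(t) \sim_{\vu_c} N(a)$ for every $t \in J$.

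For the \emph{second assertion}, applying the first part on each $J_\ell$ gives $N(t) \sim_{\vu_c} N_\ell := \sup_{t \in J_\ell} N(t)$ throughout $J_\ell$, so $\int_{J_\ell} N^3(t)\,dt \sim_{\vu_c} N_\ell^3 |J_\ell|$, and the claim reduces to the two-sided bound $|J_\ell| \sim_{\vu_c} N_\ell^{-2}$. The upper bound $N_\ell^2 |J_\ell| \sim_{\vu_c} \int_{J_\ell} N^2\,dt \lesssim 1$ is again immediate from Lemma \ref{local constancy lemma}. For the matching lower bound I would argue by contradiction and rescale by $N_\ell$: if $\tau := N_\ell^2|J_\ell| \ll 1$, then $\vu_c$ rescaled sits at unit concentration scale on an interval of length $\tau$, and a short-time Strichartz estimate together with Bernstein at the concentration scale forces the scale-invariant norm $\|\vu_c\|_{\diag\ell^2(J_\ell)}$ to be small in $\tau$, contradicting $\|\vu_c\|_{\diag\ell^2(J_\ell)} = 1$. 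Summing over $\ell$ then yields
\begin{align*}
\int_J N^3(t)\,dt \;=\; \sum_\ell \int_{J_\ell} N^3(t)\,dt \;\sim_{\vu_c}\; \sum_\ell N_\ell^3 |J_\ell| \;\sim_{\vu_c}\; \sum_\ell N_\ell \;\sim_{\vu_c}\; \sum_\ell \sup_{t\in J_\ell} N(t).
\end{align*}

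The main obstacle is precisely the lower bound $|J_\ell| \gtrsim_{\vu_c} N_\ell^{-2}$: unlike its matching upper counterpart it is not contained in either preceding lemma, and its proof needs a short-time Strichartz/scaling argument tied to the concentration scale $N_\ell$ together with the $h^1L_x^2$-almost periodicity to control the rescaled data uniformly. Everything else is bookkeeping on top of the two chained lemmas.
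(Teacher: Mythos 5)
Your proof of the first assertion is exactly the standard chaining argument and is correct: the key point is that the number $K$ of intervals of local constancy needed to cover $J$ is $\lesssim_u \delta^{-1}\int_J N^2\,dt \lesssim_u 1$ by the left inequality of \eqref{local constancy -1}, and chaining $N(a_{k+1})\sim_u N(a_k)$ over boundedly many steps gives the claim.

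For the second assertion you correctly reduce to the two-sided bound $N_\ell^2|J_\ell|\sim_u 1$, and you are right that the upper bound $N_\ell^2|J_\ell|\lesssim_u 1$ is immediate from Lemma \ref{local constancy lemma} while the lower bound $N_\ell^2|J_\ell|\gtrsim_u 1$ is not literally contained in Lemmas \ref{lem local constancy} and \ref{local constancy lemma} applied to a \emph{single} $J_\ell$: with $\|\vu\|_{\diag\ell^2(J_\ell)}^4=1$ the right-hand inequality of \eqref{local constancy -1} gives no information. Your proposed short-time Strichartz/Bernstein/almost-periodicity argument is a valid way to close this gap, and is essentially the standard "perturbation from the linear flow on a short interval" argument. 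The paper gives no proof of the corollary, asserting it is an immediate consequence of the two lemmas, so your proof stands alone; but there is a cheaper route that stays entirely within Lemmas \ref{lem local constancy}--\ref{local constancy lemma} and which is probably what justifies the "immediate" in the paper. Since $\|\vu\|^4_{\diag\ell^2}$ is additive over consecutive intervals, group the $J_\ell$ into consecutive clusters $\Lambda$ of $C_1$ intervals each, with $C_1$ a fixed absolute constant chosen large compared to the implicit constant $C_0$ in \eqref{local constancy -1}. Then $\|\vu\|^4_{\diag\ell^2(\Lambda)}=C_1$, and the right-hand inequality of \eqref{local constancy -1} now gives $\int_\Lambda N^2\,dt\geq c(C_1-C_0)\gtrsim 1$, while the left-hand inequality gives $\int_\Lambda N^2\,dt\lesssim C_1$; your first-assertion argument, applied cluster-by-cluster, gives $N(t)\sim_u N_\Lambda$ on all of $\Lambda$, so $\int_\Lambda N^3\sim_u N_\Lambda\sim_u C_1^{-1}\sum_{\ell\in\Lambda}N_\ell$. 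Summing over $\Lambda$ yields the claim with constants depending only on $u$ (through $C_0$ and $\delta$) and the absolute constant $C_1$. This avoids any invocation of Strichartz, Bernstein at the concentration scale, or the compactness modulus beyond what is already baked into Lemmas \ref{lem local constancy} and \ref{local constancy lemma}.
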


\subsection{Impossibility of solutions of rapid cascade type}
In this section we rule out the rapid cascade scenario, i.e. the case $\int_0^\infty N(t)^3\,dt<\infty$. We firstly state the following lemma proved in \cite{dodson2d,Yang_Zhao_2018}, which confirms the higher regularity of the minimal blow-up solution in the rapid cascade case.
\begin{lemma}[\cite{dodson2d,Yang_Zhao_2018}]\label{lem higher reg}
Let $\vu$ be the almost periodic solution of \eqref{nls} given by Theorem \ref{thm existence critical element}. If $\int_0^\infty N(t)^3\,dt=K<\infty$, then $\|\vu\|_{L_t^\infty \ell^2 \dot{H}_x^2(0,\infty)}<\infty$.
\end{lemma}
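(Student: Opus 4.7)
The plan is to argue by a \emph{double Duhamel} trick combined with a vector-valued long-time Strichartz estimate and the rapid-cascade hypothesis $\int_0^\infty N(t)^3\,dt<\infty$. First I would establish that the hypothesis forces $N(t)\to 0$ along a sequence $t_n\to\infty$: otherwise $\inf_{t\ge T}N(t)>0$ for some $T$ and Lemma \ref{local constancy lemma} would make $\int N(t)^3\,dt$ blow up. Combined with the almost periodicity of $\vu$ in $h^1 L_x^2$ modulo $L_x^2$-symmetries and the fact that momentum conservation forces $\xi(t)$ to stay bounded, one deduces $P_{>N}\vu(t_n)\rightharpoonup 0$ weakly in $\ell^2 L_x^2$ as $t_n\to\infty$ for every fixed $N>0$. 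The backward Duhamel formula then gives, in the weak-$L^2$ sense,
\begin{align*}
P_{>N}\vu(0)=-i\lim_{T\to\infty}\int_0^T e^{-is\Delta_x}P_{>N}F(\vu(s))\,ds.
\end{align*}

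Next, inserting two copies of this identity into $\|P_{>N}\vu(0)\|^2_{\ell^2 L_x^2}$ yields
\begin{align*}
\|P_{>N}\vu(0)\|_{\ell^2 L_x^2}^2=\int_0^\infty\!\!\int_0^\infty\langle e^{-it\Delta_x}P_{>N}F(\vu(t)),\,e^{-is\Delta_x}P_{>N}F(\vu(s))\rangle\,dsdt.
\end{align*}
I would split the $(s,t)$-integration into a near-diagonal region $|s-t|\le N^{-2}$ and a far region $|s-t|\ge N^{-2}$. On the near-diagonal part, I partition $[0,\infty)$ into consecutive intervals $J_\ell$ on each of which $\|\vu\|_{\diag\ell^2(J_\ell)}=1$; by the corollary to Lemma \ref{local constancy lemma} their total contribution is controlled by $\int N(t)^3\,dt<\infty$. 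On each such $J_\ell$ one applies the vector-valued Strichartz estimate together with the almost-periodic concentration \eqref{almost periodic smallness} to extract a smallness factor $\eta$ from the frequency-projected nonlinearity. On the far-diagonal part I would invoke the two-dimensional dispersive estimate $\|e^{i\tau\Delta_x}\|_{L^1_x\to L^\infty_x}\lesssim|\tau|^{-1}$ jointly with the frequency localization $P_{>N}$ to produce decay in $N$. Combining these bounds and optimizing the splitting parameter produces
\begin{align*}
\|P_{>N}\vu(t_0)\|_{\ell^2 L_x^2}\lesssim N^{-\delta}
\end{align*}
for some $\delta>0$ and every $t_0\in[0,\infty)$, by time-translation invariance of the argument.

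The main obstacle will be handling the near-diagonal region: one must upgrade the $L^2$-critical Strichartz theory to a frequency-localized long-time estimate compatible with the resonant nonlinearity $F(\vu_m)=-(\sum_i|u_i|^2+\sum_{i\ne j}|u_i|^2)u_m$, exactly the sort of vector-valued bound developed in \cite{dodson2d,Yang_Zhao_2018}. Once this is in place, a finite bootstrap---iterating the gain $\delta\mapsto 2\delta$ (or re-running the Duhamel/dispersive splitting with higher frequency weights)---improves the decay to $N^{-1}$, and summing dyadically gives
\begin{align*}
\|\vu\|_{L_t^\infty\ell^2\dot H_x^2(0,\infty)}^2\sim\sum_{N\in 2^{\Z}}N^4\|P_N\vu\|^2_{L_t^\infty\ell^2 L_x^2}<\infty,
\end{align*}
which is the claimed higher regularity.
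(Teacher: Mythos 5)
Your far-diagonal estimate does not close. In two space dimensions the dispersive estimate gives only $|t-s|^{-1}$ decay, and since $\|P_{>N}F(\vu(t))\|_{\ell^2 L^1_x}$ is merely bounded uniformly in $t$ by the conservation laws, the far-diagonal contribution to the double Duhamel is controlled at best by $\int_0^\infty\!\int_0^\infty \mathds{1}_{\{|t-s|\ge N^{-2}\}}|t-s|^{-1}\,ds\,dt$, which diverges at large $|t-s|$ regardless of $N$. Extra decay beyond $|t-s|^{-1}$ is indispensable. In the radial $2$d mass-critical problem (Killip--Tao--Visan) that decay comes from the radial Sobolev embedding, which is unavailable here; and in fact the proof in \cite{dodson2d,Yang_Zhao_2018} does not run a forward-forward double Duhamel with a near/far split at all. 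It uses a single reduced Duhamel formula and estimates the Duhamel integral through the vector-valued long-time Strichartz spaces (built on $U^p_\Delta/V^p_\Delta$ atoms adapted to the frequency hierarchy) and the frequency-localized bilinear estimates that accompany them; it is this machinery, not ordinary Strichartz on unit intervals plus concentration smallness, that converts the hypothesis $\int_0^\infty N(t)^3\,dt<\infty$ into a quantitative frequency-localized spacetime bound with polynomial decay in the frequency cutoff.

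The near-diagonal part of your argument has a related gap. Partitioning $[0,\infty)$ into the finitely many intervals $J_\ell$ with $\|\vu\|_{\diag\ell^2(J_\ell)}=1$ and invoking \eqref{almost periodic smallness} only produces a \emph{qualitative} smallness $\eta(N)=o_N(1)$ per interval, because the function $C(\eta)$ in Lemma \ref{arzela ascoli} carries no rate; summing over the $J_\ell$'s therefore never yields a polynomial bound $N^{-\delta}$, so the subsequent bootstrap ``$\delta\mapsto 2\delta$'' has no base case. You do flag that the frequency-localized long-time Strichartz estimate is needed --- and that is exactly right --- but the proof actually built around it (a frequency-envelope argument with high-low and high-high decompositions of the cubic nonlinearity and a Gronwall loop on the envelope) is structurally different from the near/far splitting you describe, and in particular inserting the estimate only into the near-diagonal zone still leaves the divergent far-diagonal integral unaddressed.
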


\begin{lemma}[Impossibility of almost periodic solution of rapid cascade type]\label{impos rapid cascade}
Let $\vu$ be the almost periodic solution of \eqref{nls} given by Theorem \ref{thm existence critical element}. If $\int_0^\infty N(t)^3\,dt=K<\infty$, then $\vu\equiv 0$.
\end{lemma}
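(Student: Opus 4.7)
The plan is to derive a contradiction under the assumption $\vu\not\equiv 0$. The overall scheme combines the higher-regularity gain from Lemma \ref{lem higher reg} with frequency localization to force $\|\nabla_x\vu(t)\|_{\ell^2L_x^2}\to 0$, and then closes via energy conservation together with the Weinstein-type coercivity coming from Proposition \ref{gagliardo nirenberg}. This is modeled on the rapid-cascade exclusion in \cite{dodson2d,Yang_Zhao_2018}, with the focusing mass-subcritical regime supplying the coercivity.

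First I would record two a priori consequences of the hypothesis. Lemma \ref{lem higher reg} gives $\sup_{t\geq 0}\|\vu(t)\|_{\ell^2\dot{H}_x^2}<\infty$. The local constancy of $N$ in Lemma \ref{lem local constancy}, combined with $\int_0^\infty N(t)^3\,dt<\infty$, forces $N(t)\to 0$ as $t\to\infty$: any persistence $N(t_n)\geq c>0$ along $t_n\to\infty$ would produce disjoint subintervals of length $\sim c^{-2}$ on which $N\gtrsim c$, violating the finiteness of $\int_0^\infty N^3\,dt$. I would next perform a Galilean normalization to reduce to $\xi(t)\equiv 0$, following \cite{killip_tao_visan_2d_mass_critical,Yang_Zhao_2018}.

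With these reductions in hand, the almost-periodicity statement of Lemma \ref{arzela ascoli} reads, for each $\eta>0$,
\[
\sum_j\la j\ra^2\int_{|\xi|\geq C(\eta)N(t)}|\hat{\vu}_j(t,\xi)|^2\,d\xi<\eta^2.
\]
Splitting $\|\nabla_x\vu(t)\|_{\ell^2L_x^2}^2=\sum_j\int|\xi|^2|\hat\vu_j|^2\,d\xi$ at the threshold $|\xi|=C(\eta)N(t)$, the low-frequency part is bounded by $C(\eta)^2N(t)^2M_0(\vu)$, which vanishes as $t\to\infty$. The high-frequency part is controlled via Cauchy--Schwarz, pairing $1$ with $|\xi|^2$, by
\[
\Bigl(\sum_j\int_{|\xi|\geq C(\eta)N(t)}|\hat\vu_j|^2\Bigr)^{\!1/2}\Bigl(\sum_j\int|\xi|^4|\hat\vu_j|^2\Bigr)^{\!1/2}\leq\eta\,\|\vu\|_{L_t^\infty\ell^2\dot{H}_x^2}.
\]
Letting $t\to\infty$ and then $\eta\to 0$ produces $\|\nabla_x\vu(t)\|_{\ell^2L_x^2}\to 0$. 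By Proposition \ref{gagliardo nirenberg}, the nonlinear part of the energy is bounded by $\mathrm{C}_{\mathrm{GN,rs}}^{-1}M_0(\vu)\|\nabla_x\vu(t)\|_{\ell^2L_x^2}^2$ and also vanishes, so $E(\vu(t))\to 0$. Energy conservation then forces $E(\vu)\equiv 0$, and since $M_0(\vu)<\tfrac12\mM(Q_{2d})=2\mathrm{C}_{\mathrm{GN,rs}}$ strictly, the coercivity
\[
E(\vu(t))\geq\tfrac12\bigl(1-\tfrac12\mathrm{C}_{\mathrm{GN,rs}}^{-1}M_0(\vu)\bigr)\|\nabla_x\vu(t)\|_{\ell^2L_x^2}^2
\]
yields $\nabla_x\vu\equiv 0$, whence $\vu\equiv 0$ since $\vu\in\ell^2L_x^2$.

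The main obstacle I expect is the Galilean reduction $\xi(t)\equiv 0$: \textit{a priori} $\xi(t)$ may drift unboundedly, and without this reduction the low-frequency bound $|\xi|^2\leq 2|\xi(t)|^2+2C(\eta)^2N(t)^2$ fails to produce smallness. I would handle it by combining the $h^1L_x^2$-compactness modulo $L_x^2$-symmetries with momentum conservation for the vector system to first show $\xi(t)$ stays bounded, then absorb the bounded drift into a single global Galilean boost applied to the minimal blow-up solution, exactly as in \cite{killip_tao_visan_2d_mass_critical,Yang_Zhao_2018}. Once this normalization is in place, the remaining interpolation and energy steps are routine.
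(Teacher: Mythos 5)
Your proposal follows the same route as the paper: derive $N(t)\to 0$ from local constancy and $\int_0^\infty N^3\,dt<\infty$, invoke the $\ell^2\dot H_x^2$-regularity of Lemma~\ref{lem higher reg}, split at frequency $C(\eta)N(t)$, interpolate the high-frequency tail against the $\dot H_x^2$ bound and the compactness estimate \eqref{almost periodic smallness}, and close with energy conservation together with the coercivity supplied by Proposition~\ref{gagliardo nirenberg} under $M_0(\vu)<\tfrac12\mM(Q_{2d})$. The interpolation via Cauchy--Schwarz (pairing $1$ with $|\xi|^2$) and the final energy trapping are stated correctly.

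The one place where you and the paper diverge in emphasis is the frequency-center $\xi(t)$. The paper simply uses $\xi(t)$-centered projectors and asserts that $\|P_{\xi(t),\leq C(\eta)N(t)}\vu(t)\|_{\ell^2 L_x^2}\to 0$ (in fact as stated this quantity is \emph{close to} $M_0(\vu)^{1/2}$ by \eqref{almost periodic smallness}, so what is really being used is the passage to $\ell^2\dot H_x^1$ of the low mode, which requires control of $|\xi(t)|$), while you explicitly flag the Galilean normalization $\xi(t)\equiv 0$ as the key point that needs justification and defer to \cite{killip_tao_visan_2d_mass_critical,Yang_Zhao_2018}. Your instinct here is right: without some argument pinning $\xi(t)$ near the origin, the low-frequency $\ell^2\dot H_x^1$-norm is bounded only by $(|\xi(t)|+C(\eta)N(t))M_0^{1/2}$, which does not vanish from $N(t)\to 0$ alone. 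Be slightly careful with your fix, though: a single global Galilean boost only eliminates a \emph{constant} drift, so what you actually need (and what the references deliver) is that in the rapid-cascade regime $\xi(t)$ converges, after which the limit is boosted to zero; ``bounded'' drift on its own is not enough to make the low-frequency $\dot H^1$ piece tend to zero. Once that is said, your argument is essentially a more carefully spelled out version of the paper's proof, not a genuinely different route.
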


\begin{proof}
From Lemma \ref{lem local constancy} it follows $|N'(t)|\lesssim N(t)^3$, which in turn implies $N(t)\to 0$ as $t\to\infty$. Combining with the fact that $\vu$ is almost periodic modulo $L_x^2$-symmetries we infer that for any $\eta>0$
\begin{align}
\lim_{t\to\infty}\|P_{\xi(t),\leq C(\eta)N(t)}\vu(t)\|_{\ell^2 L_x^2}=0,
\end{align}
which combining with interpolation and the fact that $\vu\in L_t^\infty\ell^2 \dot{H}^2_x(0,\infty)$ deduced from Lemma \ref{lem higher reg}
\begin{align}
\lim_{t\to\infty}\|P_{\xi(t),\leq C(\eta)N(t)}\vu(t)\|_{\ell^2 \dot{H}_x^1}=0.
\end{align}
Now using interpolation and \eqref{almost periodic smallness} we obtain
\begin{align}
\|P_{\xi(t),\geq C(\eta)N(t)}\vu(t)\|_{\ell^2 \dot{H}_x^1}\lesssim \eta^{\frac12}
\end{align}
for all $t\in(0,\infty)$. Since $\eta$ is chosen arbitrarily, we conclude that
\begin{align}
\lim_{t\to\infty}\|\vu(t)\|_{\ell^2 \dot{H}_x^1}=0.
\end{align}
By Proposition \ref{gagliardo nirenberg}, conservation of mass and energy
\begin{align}
E(\vu(0))=\lim_{t\to\infty}E(\vu(t))\lesssim \lim_{t\to\infty}\|\vu(t)\|^2_{\ell^2 \dot{H}_x^1}=0.
\end{align}
But using Proposition \ref{gagliardo nirenberg} again, we infer that
\begin{align}
\|\nabla \vu(0,x)\|^2_{\ell^2 L_x^2}\leq 2\bg(1-\frac{2M_0(\vu)}{\mM(Q_{2d})}\bg)^{-1}E(\vu(0))=0,
\end{align}
which in turn implies $\vu=0$. This completes the proof.
\end{proof}

\subsection{Impossibility of solutions of quasi-soliton type}
In this section we rule out the quasi-soliton scenario, i.e. the case $\int_0^\infty N(t)^3\,dt=\infty$. First, we denote by $\vare_3$ the small constant related to the long time Strichartz estimate, which is the same constant defined in \cite[Sec. 5]{Yang_Zhao_2018}. The construction of the long time Strichartz estimate is however very cumbersome and will not be directly applied for the upcoming proofs, thus we omit the details. Let $T>0$ and define $K=\int_0^T N(t)^3\,dt$. Define $w=(w_m)_m=P_{\leq \vare_3^{-1}K}\vu$. Then
\begin{align}
i\pt_t w_m+\Delta w_m=F(w_m)+(P_{\leq \vare_3^{-1}K}F(\vu_m)-F(w_m))=:F(w_m)+N_m.
\end{align}
Let $a(t,x)=(a_j(t,x))_{j=1,2}:I\times\R^2\to\R^2$ be some to be determined potentials. We define the frequency localized interaction Morawetz action $M(t)$ by
\begin{align}
M(t)&=2\sum_{n,m}\int\int|w_n(t,y)|^2a(t,x-y)\mathrm{Im}(\bar w_m\nabla w_m)(t,x)\,dxdy.
\end{align}
Integration by parts yields
\begin{align}
&\,\frac{d}{dt}M(t)\label{morawetz small 1}\\
=&\,4\int\int (\sum_n|w_n|^2)(t,y)\pt_ka_j(t,x-y)\mathrm{Re}(\sum_m\pt_j\bar w_m \pt_k w_m)(t,x)\,dxdy\label{remaining 1}\\
-&\,4\int\int\mathrm{Im}(\sum_n\bar w_n \pt_k w_n)(t,y)\pt_k a_j(t,x-y)\mathrm{Im}(\sum_m\bar w_m \pt_j w_m)(t,x)\,dxdy\label{remaining 2}\\
+&\,\int\int(\sum_n|w_n|^2)(t,y)a_j(t,x-y)\pt_j\pt_k^2(\sum_m|w_m|^2)(t,x)\,dxdy\label{remaining 3}\\
+&\,\int\int (\sum_n|w_n|^2)(t,y) \pt_j a_j(t,x-y)(\sum_m F(w_m)\bar w_m)(t,x)\,dxdy\label{remaining 4}\\
+&\,2\int\int(\sum_n|w_n|^2)(t,y)\pt_ta_j(t,x-y)\mathrm{Im}(\sum_m\bar w_m\pt_j w_m)(t,x)\,dxdy\label{remaining 5}\\
+&\,4\int\int\mathrm{Im}(\sum_n\bar w_n N_n)(t,y)a_j(t,x-y)\mathrm{Im}(\sum_m\bar w_m \pt_j w_m)(t,x)\,dxdy\label{morawetz small 2}\\
+&\,2\int\int (\sum_n|w_n|^2)(t,y) a_j(t,x-y)\mathrm{Re}(\sum_m N_m\pt_j\bar w_m)(t,x)\,dxdy\label{morawetz small 3}\\
-&\,2\int\int (\sum_n|w_n|^2)(t,y) a_j(t,x-y)\mathrm{Re}(\sum_m w_m\pt_j\bar N_m)(t,x)\,dxdy\label{morawetz small 4}.
\end{align}
The following lemma shows that \eqref{morawetz small 1}, \eqref{morawetz small 2}, \eqref{morawetz small 3} and \eqref{morawetz small 4} are ignorable for suitable potentials $(a_j)_j$ and sufficiently large $K=\int_0^T N(t)^3\,dt$.
\begin{lemma}[\cite{dodson2d,Yang_Zhao_2018}]\label{lem small K}
Let $R>0$. Assume that the potentials $(a_j)_j$ are real and satisfy
\begin{align}
|a_j(t,x)|&\leq R,\label{require1}\\
|\nabla a_j(t,x)|&\leq R|x|^{-1},\label{gradient}\\
a_j(t,x)&=-a_j(t,-x)\label{require3}
\end{align}
for all $j=1,2$, $t\in[0,T]$ and $x\in\R^2$. Then
\begin{align}
\sup_{t\in [0,T]}|M(t)|+\int_0^T\eqref{morawetz small 2}+\eqref{morawetz small 3}+\eqref{morawetz small 4}\,dt\lesssim Ro(K),
\end{align}
where $o(K)$ is some quantity such that $o(K)/K\to 0$ as $K\to\infty$.
\end{lemma}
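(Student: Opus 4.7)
The plan is to estimate $\sup_{t\in[0,T]}|M(t)|$ and $\int_0^T\eqref{morawetz small 2},\eqref{morawetz small 3},\eqref{morawetz small 4}\,dt$ separately, using three ingredients: (a) the uniform bounds \eqref{require1} and \eqref{gradient} on the potentials $a_j$; (b) uniform mass and gradient bounds on $w=P_{\le\vare_3^{-1}K}\vu_c$, which follow from mass conservation together with Proposition \ref{gagliardo nirenberg} and the subcritical mass assumption $M_0(\vu_c)<\tfrac12\mM(Q_{2d})$; and (c) the vector-valued long time Strichartz estimate of \cite{Yang_Zhao_2018}, which provides the $o(K)$ smallness of the frequency-truncation error $N=(N_m)_m$ in appropriate space-time norms. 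The antisymmetry \eqref{require3} does not enter here; it is needed only for the sign analysis of the ``good'' contributions \eqref{remaining 1}--\eqref{remaining 5}.

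For $\sup_t|M(t)|$ I would discard the sign structure entirely. Using $|a(x-y)|\le R$ together with the triangle inequality and Cauchy--Schwarz in both $m$ and $x$,
\begin{align*}
|M(t)|\le 2R\,\|w(t)\|_{\ell^2L_x^2}^{3}\,\|\nabla w(t)\|_{\ell^2L_x^2}\lesssim R,
\end{align*}
since $\|w(t)\|_{\ell^2L_x^2}$ is bounded by the conserved mass and $\|\nabla w(t)\|_{\ell^2L_x^2}\le\|\nabla\vu_c(t)\|_{\ell^2L_x^2}$ is uniformly controlled via Proposition \ref{gagliardo nirenberg} and energy conservation. Hence $|M(t)|\lesssim R$ uniformly in $t\in[0,T]$, which is trivially $\lesssim Ro(K)$.

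The integrals of \eqref{morawetz small 2} and \eqref{morawetz small 3} are structurally identical. Using $|a_j|\le R$ and Cauchy--Schwarz, each integrand is bounded pointwise in $t$ by
\begin{align*}
R\,\|w(t)\|_{\ell^2L_x^2}^{2}\,\|\nabla w(t)\|_{\ell^2L_x^2}\,\|N(t)\|_{\ell^2L_x^2}\lesssim R\,\|N(t)\|_{\ell^2L_x^2}.
\end{align*}
Integrating in $t$, the long time Strichartz estimate of \cite[Sec.~5]{Yang_Zhao_2018} applied to $\vu_c$ gives that the relevant dual space-time norm of $N$ is $o(K)$, yielding the required bound $Ro(K)$.

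The main obstacle is \eqref{morawetz small 4}, where the derivative sits on $\bar N_m$ rather than on $w_m$. I would integrate by parts in $x$ to move the derivative off $N$; this produces a sum of two integrals, one in which $\partial_j$ falls on $a_j(x-y)$ and one in which it falls on $w_m(x)$. The latter is structurally identical to \eqref{morawetz small 3} and is handled as above. For the former, the weighted bound \eqref{gradient} reduces the contribution to a Hardy / Hardy--Littlewood--Sobolev type convolution integral
\begin{align*}
R\int\!\!\int\frac{\bigl(\sum_n|w_n(y)|^2\bigr)\bigl(\sum_m|w_m(x)N_m(x)|\bigr)}{|x-y|}\,dx\,dy,
\end{align*}
which by HLS in two dimensions (exactly as in \cite{dodson2d}) is controlled by $R\,M_0(w)\,\|w(t)\|_{L_x^4\ell^2}\,\|N(t)\|_{L_x^4\ell^2}$. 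Integrating in $t$ and invoking long time Strichartz once more yields the factor $o(K)$. The entire calculation is the vector-valued transcription of Dodson's $\R^2$ argument \cite{dodson2d}; since the sign of the nonlinearity does not enter any of these estimates, the focusing character of \eqref{nls} plays no role at this step.
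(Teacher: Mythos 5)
The paper does not prove Lemma \ref{lem small K}; it imports it from \cite{dodson2d,Yang_Zhao_2018}, so there is no in-paper argument to compare against. Your sketch does capture the right coarse structure of the cited proof: the uniform bound $\sup_t|M(t)|\lesssim R$ follows from $|a|\leq R$, mass conservation, and the kinetic-energy control supplied by Proposition \ref{gagliardo nirenberg}; the terms \eqref{morawetz small 2}, \eqref{morawetz small 3} are bounded pointwise in $t$ using $|a|\leq R$; in \eqref{morawetz small 4} an integration by parts in $x$ moves $\pt_j$ off $\bar N_m$ and produces a $\nabla a$-weighted term handled via \eqref{gradient}; and the antisymmetry \eqref{require3} is indeed not needed for the error estimates.

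Nonetheless there are two genuine gaps. The central one is that you reduce everything to the assertion that the commutator error $N=(N_m)_m$ is $o(K)$ in a suitable space-time norm and then simply ``invoke'' the long time Strichartz estimate. That assertion is essentially the entire content of the lemma and not something that can be read off. After your pointwise bounds the quantity to control in \eqref{morawetz small 2}, \eqref{morawetz small 3} is $R\|N\|_{L_t^1\ell^2 L_x^2([0,T])}$; the long time Strichartz estimate of \cite[Sec.~5]{Yang_Zhao_2018} does not give bounds in that norm, and there is no reason offered why it should be $o(K)$. The actual commutator analysis (exploiting that $N$ involves only frequencies $\gtrsim \vare_3^{-1}K$) and the bilinear-Strichartz bookkeeping in \cite{dodson2d,Yang_Zhao_2018} are carried out in quite different dual Lebesgue exponents, and the $o(K)$ smallness is a highly nontrivial output of that machinery rather than an input. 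The second, smaller problem is the Hardy--Littlewood--Sobolev step for \eqref{morawetz small 4}: in $\R^2$ with kernel $|x-y|^{-1}$ the admissible range is $\tfrac1p+\tfrac1q=\tfrac32$ with $1<p,q<\infty$, so the pairing you propose, yielding $R\,M_0(w)\,\|w\|_{L_x^4\ell^2}\,\|N\|_{L_x^4\ell^2}$ via $L^1\times L^2$, sits at an inadmissible endpoint (equivalently, the Riesz potential $I_1$ does not map $L^2(\R^2)$ into $L^\infty$). One instead needs the self-dual pairing $L^{4/3}\times L^{4/3}$ or a dyadic splitting of the kernel.
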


We therefore from now on focus on the remaining terms in the interaction Morawetz action. Before we finally exclude the quasi-soliton scenario, we still need the following asymptotic smallness lemma.

\begin{lemma}\label{lem before last}
Let $J$ be a time interval such that $\|\vu\|_{\diag\ell^2(J)}\lesssim 1$. Then
\begin{align}
\|\mathds{1}_{\{|x-x(t)|\geq RN(t)^{-1}\}}\vu\|_{L_{t,x}^4\ell^2(J)}+
\|P_{|\xi-\xi(t)|\geq RN(t)}\vu\|_{L_{t,x}^4\ell^2(J)}=o_R(1)
\end{align}
as $R\to\infty$.
\end{lemma}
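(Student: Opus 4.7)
The plan is to handle both terms via a single interpolation argument, combining the $L^2_x$-smallness furnished by Lemma \ref{arzela ascoli} with the Strichartz-boundedness of $\vu$ implied by $\|\vu\|_{L^4_{t,x}\ell^2(J)}\lesssim 1$.

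First I would fix a non-endpoint $L^2$-admissible pair $(q_t,q_x)$ in $\R^2$, namely $q_x\in(4,\infty)$ and $q_t=2q_x/(q_x-2)\in(2,4)$, and set $\theta=q_x/(2(q_x-2))=q_t/4\in(1/2,1)$. Pointwise interpolation between $L^2_x$ and $L^{q_x}_x$ followed by H\"older in $t$ then gives, for any measurable $g\colon J\times\R^2\to\ell^2$,
\begin{align*}
\|g\|_{L^4_{t,x}\ell^2(J)}\leq \|g\|_{L^\infty_t L^2_x\ell^2(J)}^{1-\theta}\|g\|_{L^{q_t}_t L^{q_x}_x\ell^2(J)}^{\theta}.
\end{align*}

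Next I would apply this inequality with $g:=\mathds{1}_{\{|x-x(t)|\geq RN(t)^{-1}\}}\vu$ and $g:=P_{|\xi-\xi(t)|\geq RN(t)}\vu$ in turn. In both cases the first factor is $o_R(1)$ by Lemma \ref{arzela ascoli} (directly for the physical-space cutoff, and via Plancherel for the frequency-space one). For the second factor, multiplication by the sharp indicator $\mathds{1}_{\{|x-x(t)|\geq RN(t)^{-1}\}}$ is a contraction on $L^{q_x}_x\ell^2$, while the Galilean-shifted smooth Littlewood-Paley projector $P_{|\xi-\xi(t)|\geq RN(t)}$ is bounded on $L^{q_x}_x\ell^2$ with a constant independent of $(\xi(t),N(t),R)$, either by the vector-valued H\"ormander-Mikhlin theorem or by Galilean conjugation and dilation to a standard projector. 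Hence
\begin{align*}
\|g\|_{L^{q_t}_t L^{q_x}_x\ell^2(J)}\lesssim \|\vu\|_{L^{q_t}_t L^{q_x}_x\ell^2(J)}\lesssim 1,
\end{align*}
where the last bound follows from Strichartz applied to the Duhamel representation of $\vu$, using conservation of mass, the hypothesis $\|\vu\|_{L^4_{t,x}\ell^2(J)}\lesssim 1$, and the cubic nature of the nonlinearity. Combining the two factors yields the sought $o_R(1)$ estimate for each cutoff.

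With this strategy the only non-routine point is the $L^{q_x}_x\ell^2$-boundedness of the moving Littlewood-Paley projector, uniformly in its center and scale; this, however, reduces straightforwardly to the scalar Mikhlin multiplier theorem after Galilean conjugation and dilation to a standard projector. In particular, no subdivision of $J$ into intervals of small scattering norm is needed, which would be the main technical obstacle if one attempted to propagate the initial $L^2$-smallness through a Duhamel/Strichartz bootstrap instead.
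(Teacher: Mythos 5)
Your proposal is correct and it follows essentially the same route as the paper: establish $\|\vu\|_{L_t^p L_x^q \ell^2(J)}\lesssim 1$ for an admissible pair with $p\in(2,4)$ via Duhamel, Strichartz, conservation of mass and the $L^4_{t,x}\ell^2$ hypothesis, then interpolate $L^4_{t,x}\ell^2$ between $L^\infty_t L^2_x\ell^2$ (where Lemma \ref{arzela ascoli} gives the $o_R(1)$ smallness) and $L^p_t L^q_x\ell^2$. Your explicit remarks on the $L^{q_x}_x\ell^2$-contractivity of the sharp spatial cutoff and the uniform $L^{q_x}_x\ell^2$-boundedness of the Galilean-shifted smooth projector fill in details the paper leaves implicit, but the underlying argument is the same.
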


\begin{proof}
By Duhamel's formula, Strichartz estimate and conservation of mass we know that $\|\vu\|_{L_t^p L_x^q \ell^2(J)}
\lesssim 1$ for arbitrary admissible $(p,q)$. Then the desired claim follows from \eqref{arzela ascoli} and interpolation between $L_t^\infty L_x^2 \ell^2$ and $L_t^p L_x^q \ell^2$ for some admissible $(p,q)$ with $p\in(2,4)$.
\end{proof}

Having all the preliminaries we are in the position to rule out the quasi-soliton scenario.

\begin{lemma}[Impossibility of almost periodic solution of quasi-soliton type]\label{lem quasi soliton}
Let $\vu$ be the almost periodic solution of \eqref{nls} given by Theorem \ref{thm existence critical element}. If $\int_0^\infty N(t)^3\,dt=\infty$, then $\vu\equiv 0$.
\end{lemma}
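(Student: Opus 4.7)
The overall plan is to derive a contradiction by applying the frequency-localized interaction Morawetz identity above together with the assumption $\int_0^\infty N(t)^3\,dt=\infty$. Fix a large $T>0$ and set $K:=\int_0^T N(t)^3\,dt$. The goal is to establish a chain of inequalities of the form
\begin{align*}
cK \;\le\; \Bigl|\int_0^T \bigl(\text{\eqref{remaining 1}}+\text{\eqref{remaining 2}}+\text{\eqref{remaining 3}}+\text{\eqref{remaining 4}}+\text{\eqref{remaining 5}}\bigr)\,dt\Bigr| \;\lesssim\; R\cdot o(K)
\end{align*}
for some absolute constant $c>0$, with $R=R(K)$ allowed to grow slowly enough in $K$ so that $R\cdot o(K)=o(K)$. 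Sending $T\to\infty$ then contradicts $K\to\infty$ and forces $\vu_c\equiv 0$.

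The first step is to select the potentials $(a_j)_{j=1,2}$ as the two-dimensional adaptations of those constructed in \cite{Dodson4dmassfocusing}: smooth, compactly supported in $B_R\subset\R^2$, odd in $x$, and radially monotone, so that the conditions \eqref{require1}--\eqref{require3} hold with the size parameter $R$ to be fixed later. With this choice Lemma \ref{lem small K} already reduces the boundary and truncation errors to an acceptable $R\cdot o(K)$ contribution. The kinetic terms \eqref{remaining 1}+\eqref{remaining 2} then combine via a Cauchy--Schwarz manipulation applied simultaneously in the summation indices $(n,m)$ and the spatial gradient variable into a pointwise non-negative quadratic form weighted by the Hessian of $a_j$, and an integration by parts in \eqref{remaining 3} produces a term of the same positive sign up to a contribution that pairs naturally with the nonlinear term \eqref{remaining 4}.

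The main obstacle, which I expect to occupy the bulk of the work, is that \eqref{remaining 4} carries a strictly negative sign from the focusing nonlinearity and cannot be discarded. The crucial input is the sub-threshold coercivity provided by Proposition \ref{gagliardo nirenberg}: since $M_0(\vu_c)<\tfrac12\mM(Q_{2d})$, one has
\begin{align*}
\tfrac12\|\nabla \vu_c(t)\|^2_{\ell^2 L_x^2} - \tfrac14\int_{\R^2}\Bigl(\sum_j|u_j|^2\Bigr)^2+\sum_j|u_j|^2\sum_{i\neq j}|u_i|^2\,dx \;\geq\; \theta\,\|\nabla \vu_c(t)\|^2_{\ell^2 L_x^2}
\end{align*}
for some $\theta=\tfrac12(1-2M_0(\vu_c)/\mM(Q_{2d}))>0$ uniform in $t$, exactly as in the proof of Lemma \ref{impos rapid cascade}. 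Transferring this inequality to the frequency-truncated profile $w=P_{\leq \vare_3^{-1}K}\vu_c$ introduces only a $o(K)$ correction that is absorbed by the long time Strichartz estimate; integrating it under the Morawetz weight then lets \eqref{remaining 4} be absorbed into \eqref{remaining 1}+\eqref{remaining 2}+\eqref{remaining 3}, leaving a strictly positive multiple of the localized kinetic density. The modulation contribution \eqref{remaining 5} is handled via the local constancy of $\xi(\cdot),N(\cdot)$ from Lemma \ref{lem local constancy} combined with Lemma \ref{lem before last}, yielding a further $o_R(K)$ error.

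To extract the lower bound $cK$, I would partition $[0,T]$ into consecutive intervals $J_\ell$ with $\|\vu_c\|_{\diag\ell^2(J_\ell)}=1$ and, on each $J_\ell$, invoke almost periodicity (Lemma \ref{arzela ascoli}) to localize both the $\ell^2L_x^2$-mass and the $\ell^2\dot H_x^1$-kinetic density of $w$ to a ball of radius $\sim N(t)^{-1}$ centered at $x(t)$; on such a ball the Morawetz weight $a_j(x-y)$ supplies the geometric factor $\sim N(t)$, producing a per-interval contribution of order $\sup_{t\in J_\ell}N(t)$. Summing over $\ell$ and invoking the spacetime bound $\sum_\ell \sup_{t\in J_\ell}N(t)\sim \int_0^T N(t)^3\,dt=K$ (the corollary to Lemma \ref{local constancy lemma}) closes the chain and completes the contradiction.
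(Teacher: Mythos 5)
Your high-level plan matches the paper's: frequency-localized interaction Morawetz with Dodson-type potentials, sub-threshold coercivity from Proposition \ref{gagliardo nirenberg}, and the partition into unit-$\diag\ell^2$ intervals together with Lemma \ref{local constancy lemma} to extract the $\gtrsim K$ lower bound while Lemma \ref{lem small K} controls the small remainder. But your treatment of the modulation term \eqref{remaining 5} is wrong as stated. Local constancy (Lemma \ref{lem local constancy}) only gives $|N'(t)|\lesssim N(t)^3$, hence $\int_0^T|N'(t)|\,dt\lesssim\int_0^T N(t)^3\,dt=K$, which is $O(K)$, not $o_R(K)$: after Young's inequality peels off a piece absorbed into the kinetic term, what is left is of size $C(\gamma)R^4\int_0^T|N'(t)|\,dt$, still a multiple of $K$, so the bootstrap does not close. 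The paper must invoke Dodson's smoothing algorithm to substitute a less oscillatory $\tilde N(t)\leq N(t)$ obeying $\int_0^T|\tilde N'(t)|\,dt\ll K$ before the argument goes through; your plan contains no substitute for this step.

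Second, strict coercivity of \eqref{remaining 1}+\eqref{remaining 2} is not a consequence of Cauchy--Schwarz alone. Dropping the angular pieces by Cauchy--Schwarz leaves a radial kinetic term paired against a localized momentum bilinear $\mathrm{Im}(\bar w_n\pt_k w_n)(y)\,\mathrm{Im}(\bar w_m\pt_k w_m)(x)$, and the difference is merely nonnegative, not a fixed multiple of any density. The paper inserts a time-dependent Galilean gauge $w\mapsto e^{-ix\cdot\beta(t)}w$, with $\beta(t)$ tuned so the localized momentum vanishes, before coercivity appears. Relatedly, Proposition \ref{gagliardo nirenberg} is applied not to $w$ globally (as in your sketch) but to the spatially cut-off profile $\tilde\chi\,e^{-ix\cdot\beta(t)}w_m$ inside the Morawetz convolution kernel; the point you would need but do not address is that spatial truncation decreases $\ell^2 L_x^2$-mass and therefore preserves the sub-threshold condition $M_0<\tfrac12\mM(Q_{2d})$. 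Finally, what Proposition \ref{gagliardo nirenberg} then delivers (via \eqref{longchi5}) is a lower bound by the localized \emph{potential}-energy density $-\sum_m F(\cdot)\overline{(\cdot)}$, which is the quantity that plugs directly into the spacetime bound; extracting the \emph{kinetic} density as you propose would additionally require showing $\|\nabla\vu(t)\|^2_{\ell^2 L_x^2}\gtrsim N(t)^2$ uniformly, a fact not among the lemmas you cite.
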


\begin{proof}
First, we construct the potentials $a_j(t,x)$ as follows: For $R>0$, let $\varphi\in C_c^\infty(\R^2;[0,1])$ be a radial and decreasing function such that $\varphi\leq 1$, $\varphi(z)\equiv 1$ on $B_{R-\sqrt{R}}(0)$ and $\mathrm{supp}\,\varphi\subset B_{R}(0)$\footnote{With slight abuse of notation we identify $\varphi:\R^2\to[0,\infty)$ with the same function $\varphi:[0,\infty)\to[0,\infty)$. The same convention is made for other radial functions.}. Next, define
\begin{align}
\phi(x)=\frac{1}{|B_R(0)|}\int \varphi(x-z)\varphi(z)\,dz.
\end{align}
Particularly, $\phi$ is non-negative and radial, supported on $B_{2R}(0)$ and $\sup_{x\in\R^2}\phi(x)\leq 4$. Moreover, we have
\begin{align}
\phi(x-y)=\frac{1}{|B_R(0)|}\int \varphi(x-z)\varphi(y-z)\,dz.
\end{align}
By \cite[Lem. 6.6]{decreasing}, the function $\phi$ is also decreasing. Finally, for $M>0$ define
\begin{align}
\psi_M(x)=\psi_M(|x|)=\frac{1}{|x|}\int_0^{|x|}\phi(\frac{s}{M})\,ds.
\end{align}
As immediate consequences, we have
\begin{gather}
r\psi_M'(r)=\phi(\frac{r}{M})-\psi_M(r)\leq 0,\label{equality}\\
x_j\psi_M(|x|)\leq 8RM,\\
\nabla(x_j\psi_M(|x|)\leq \frac{24RM}{|x|}.
\end{gather}
We now define
\begin{align}
a_j(t,x)=N(t)x_j\psi_{RN(t)^{-1}}(x).
\end{align}
Then the assumptions of $a_j$ in Lemma \ref{lem small K} are satisfied (with $R$ replaced by $24R^2$) and in view of Lemma \ref{lem small K}, the proof of Lemma \ref{lem quasi soliton} follows as long as we can prove
\begin{align}
\int_0^T\eqref{remaining 1}+\eqref{remaining 2}+\eqref{remaining 3}+\eqref{remaining 4}+\eqref{remaining 5}\,dt\gtrsim K
\end{align}
for sufficiently large $K$. Let us first take \eqref{remaining 3}. Straightforward calculation results in
\begin{align}
&\,\eqref{remaining 3}\nonumber\\
=&\,-\sum_{m,n}\int\int N(t)\Delta(\phi(R^{-1}N(t)|x-y|)+\psi_{RN(t)^{-1}}(x-y))
|w_m(t,x)|^2|w_n(t,y)|^2\,dxdy.
\end{align}
Moreover, by product rule and chain rule
\begin{align}
\Delta(\phi(x)+\psi_{1}(x))=(\phi''(|x|)+\psi_1''(|x|))+|x|^{-1}(\phi'(|x|)+\psi_1'(|x|)).
\end{align}
First recall that $\sup_{s\geq 0}|\varphi(s)|\lesssim 1$,  $\sup_{s\geq 0}|\varphi'(s)|\lesssim R^{-\frac12}$ and $\varphi'(s)$ is supported on $R-\sqrt{R}\leq|s|\leq R$ . By definition of $\phi$, we obtain
\begin{align}
\phi'(r)=\frac{d}{dr}(\phi(re_1))=|B_R(0)|^{-1}\int\varphi'(|re_1-z|)\frac{r-z_1}{|re_1-z|}\varphi(z)\,dt\lesssim R^{-\frac32}.
\end{align}
In the same manner, we deduce $\phi''(r)\lesssim R^{-2}$.
Now we observe that
\begin{align}
\phi'(0)=-|B_R(0)|^{-1}\int\frac{z_1}{|z|}\varphi'(|z|)\varphi(z)\,dt=0,
\end{align}
thus $\phi'(r)=\int_0^r \phi''(r)\,dr\lesssim R^{-2}r$. Next, using \eqref{equality}
\begin{align}
r^{-1}\psi_1'(r)
= r^{-3}\int_0^r\int_s^r\phi'(t)\,dtds\lesssim R^{-2}.\label{galilean4}
\end{align}
Similarly we infer that $\psi_1''(r)\lesssim R^{-2}$, which in turn implies
\begin{align}
\Delta(\phi(x)+\psi_1(x))\lesssim R^{-2}.
\end{align}
Therefore by conservation of mass
\begin{align}
\int_0^T\eqref{remaining 3}\,dt\gtrsim -\int_0^T\frac{N(t)^3}{R^4}\|w(t)\|^4_{\ell^2 L_x^2}\,dt= o_R(1)K.
\end{align}
Let us now consider \eqref{remaining 1}, \eqref{remaining 2} and \eqref{remaining 4}. Define the radial and angular derivatives $\nabla_{r,y}$ and $\ang_{y}$ centered at a point $y\in\R^2$ by
\begin{align}
\nabla_{r,y,j}=\frac{(x_j-y_j)\pt_j}{|x-y|},\quad \ang_{y,j}=\pt_j-\nabla_{r,y,j}.
\end{align}
Then it is straightforward by direct calculation to verify that for a function $f$ we have the decomposition
\begin{align}
|\nabla f|^2=|\nabla_{r,y}f|^2+|\ang_y f|^2.\label{decomp}
\end{align}
Combining with \eqref{equality} and Cauchy-Schwarz we infer that
\begin{align}
&\,\eqref{remaining 1}+\eqref{remaining 2}\nonumber\\
=&\,4\sum_{m,n}\int\int N(t)\phi(R^{-1}N(t)|x-y|)|\nabla w_m(t,x)|^2|w_n(t,y)|^2\,dxdy\nonumber\\
-&\,4\sum_{m,n}\int\int N(t)\phi(R^{-1}N(t)|x-y|)\mathrm{Im}(\bar w_n\pt_k w_n)(t,y)
\mathrm{Im}(\bar w_m\pt_k w_m)(t,x)\,dxdy\nonumber\\
+&\,4\sum_{m,n}\int\int N(t)(\phi(R^{-1}N(t)|x-y|)-\psi_{R/N(t)}(x-y))|w_n(t,y)|^2
|\ang_{y}w_m(t,x)|^2\,dxdy\nonumber\\
-&\,4\sum_{m,n}\int\int N(t)(\phi(R^{-1}N(t)|x-y|)-\psi_{R/N(t)}(x-y))
\mathrm{Im}(\bar w_n \ang_x w_n)(t,y)\mathrm{Im}(\bar w_m \ang_y w_m)(t,x)\,dxdy\nonumber\\
\geq &\,4\sum_{m,n}\int\int N(t)\phi(R^{-1}N(t)|x-y|)|\nabla w_m(t,x)|^2|w_n(t,y)|^2\,dxdy\nonumber\\
-&\,4\sum_{m,n}\int\int N(t)\phi(R^{-1}N(t)|x-y|)\mathrm{Im}(\bar w_n\pt_k w_n)(t,y)
\mathrm{Im}(\bar w_m\pt_k w_m)(t,x)\,dxdy\nonumber\\
=&\,4|B_R(0)|^{-1}\sum_{m,n}N(t)\int\int\int
\bg(\varphi(R^{-1}N(t)x-z)|\nabla w_m(t,x)|^2\bg)
\bg(\varphi(R^{-1}N(t)y-z)|w_n(t,y)|^2\bg)\,dzdxdy\qquad\label{galilean1}\\
-&\,4 |B_R(0)|^{-1}N(t)\sum_{m,n}\int\int\int
\bg(\varphi(R^{-1}N(t)x-z)\mathrm{Im}(\bar w_m\pt_k w_m)(t,x)\bg)\nonumber\\
&\qquad\qquad\qquad
\qquad\qquad\qquad
\times\bg(\varphi(R^{-1}N(t)y-z)\mathrm{Im}(\bar w_n\pt_k w_n)(t,y)\bg)\,dzdxdy.\label{galilean2}
\end{align}
The sum of \eqref{galilean1} and \eqref{galilean2} is invariant under the Galilean transformation $w\mapsto e^{-ix\cdot\beta_0}w$ for arbitrary $\beta_0\in\R^2$ (which can be easily checked by carefully expanding the terms in \eqref{galilean2} and using product and chain rules, we omit the straightforward but tedious details here). Thus we choose $t\mapsto \beta(t)$ such that
$$\sum_m\int\varphi(R^{-1}N(t)x-z)\mathrm{Im}(\overline{e^{-ix\cdot\beta(t)}w_m}\nabla( e^{-ix\cdot\beta(t)}w_m))(t,x)=0$$
and we are left with the term
\begin{align}
&\,4|B_R(0)|^{-1}\sum_{m,n}N(t)\int\int\int
\bg(\varphi(R^{-1}N(t)x-z)|\nabla (e^{-ix\cdot\beta(t)}w_m(t,x))|^2\bg)\nonumber\\
&\qquad\qquad\qquad\times\bg(\varphi(R^{-1}N(t)y-z)|w_n(t,y)|^2\bg)\,dzdxdy.\label{galilean3}
\end{align}
The existence of such a function $t\mapsto \beta(t)$ is guaranteed by \eqref{arzela ascoli} and the fact that $\vu\neq 0$. On the other hand, \eqref{equality}, \eqref{galilean4} and the fact that $F(w_m)\bar w_m\leq 0$ result in
\begin{align}
&\,\eqref{remaining 4}\nonumber\\
=&\,\sum_{m,n}\int\int N(t)(2\psi_{RN(t)^{-1}}(x-y)+|x|\psi'_{RN(t)^{-1}}(x-y))(F(w_m)\bar w_m)(t,x)|w_n(t,y)|^2\,dxdy\nonumber\\
\geq &\,\sum_{m,n}\int\int 2N(t)\psi_{RN(t)^{-1}}(x-y)(F(w_m)\bar w_m)(t,x)|w_n(t,y)|^2\,dxdy\nonumber\\
=&\,|B_R(0)|^{-1}\sum_{m,n}\int\int\int 2 N(t)\bg(\varphi(R^{-1}N(t)x-z)(F(w_m)\bar w_m)(t,x)\bg)\nonumber\\
&\qquad\qquad\qquad\qquad\qquad\qquad\qquad\qquad\qquad\times\bg(\varphi(R^{-1}N(t)y-z)|w_n(t,y)|^2\bg)
\,dzdxdy\nonumber\\
+&\,|B_R(0)|^{-1}\sum_{m,n}\int\int 2N(t)(\psi_{RN(t)^{-1}}(x-y)-\phi(R^{-1}N(t)|x-y|))(F(w_m)\bar w_m)(t,x)|w_n(t,y)|^2\,dxdy\nonumber\\
\geq &\,|B_R(0)|^{-1}\sum_{m,n}\int\int\int 2 N(t)\bg(\varphi(R^{-1}N(t)x-z)(F(w_m)\bar w_m)(t,x)\bg)\nonumber\\
&\qquad\qquad\qquad\qquad\qquad\qquad\qquad\qquad\qquad\times\bg(\varphi(R^{-1}N(t)y-z)|w_n(t,y)|^2\bg)
\,dzdxdy \label{galilean5}\\
+&\,o_R(1)\sum_{m}\int N(t)(F(w_m)\bar w_m)(t,x)\,dx.\label{galilean6}
\end{align}
We also notice that \eqref{galilean5} and \eqref{galilean6} are Galilean invariant. Now we consider the sum of \eqref{galilean3} and \eqref{galilean5}. Let $\chi\in C_c^\infty(\R^2;[0,1])$ be radial, decreasing, $\mathrm{supp}\, \chi\subset B_{R-\sqrt{R}}(0)$ and $\chi\equiv 1$ on $B_{R-2\sqrt{R}}(0)$. Also denote $\tilde{\chi}(x)=\chi(R^{-1}N(t)x-z)$. Then
\begin{align}
&\,4\sum_{m}\int\varphi(R^{-1}N(t)x-z)|\nabla (e^{-ix\cdot\beta(t)}w_m(t,x))|^2\,dx\nonumber\\
&\,\qquad\qquad\qquad+2\sum_{m}\int\varphi(R^{-1}N(t)x-z)(F(e^{-ix\cdot\beta(t)}w_m)\overline{e^{-ix\cdot\beta(t)}w_m})(t,x)\,dx
\label{large portion}\\
=&\,4\sum_{m}\int\tilde{\chi}(x)|\nabla (e^{-ix\cdot\beta(t)}w_m(t,x))|^2\,dx
+2\sum_{m}\int\tilde{\chi}^4(x)(F(e^{-ix\cdot\beta(t)}w_m)\overline{e^{-ix\cdot\beta(t)}w_m})(t,x)\,dx\nonumber\\
+&\,4\sum_{m}\int(\varphi-\chi)(R^{-1}N(t)x-z)|\nabla (e^{-ix\cdot\beta(t)}w_m(t,x))|^2\,dx\nonumber\\
&\,\qquad\qquad\qquad+2\sum_{m}\int(\varphi-\chi^4)(R^{-1}N(t)x-z)(F(e^{-ix\cdot\beta(t)}w_m)\overline{e^{-ix\cdot\beta(t)}w_m})(t,x)\,dx
\nonumber\\
=&\,8\bg(\frac{1}{2}\sum_{m}\int|\nabla (\tilde{\chi}(x)e^{-ix\cdot\beta(t)}w_m(t,x))|^2\,dx
+\sum_m\int\frac{1}{4}F(\tilde{\chi}e^{-ix\cdot\beta(t)}w_m)\overline{\tilde{\chi}e^{-ix\cdot\beta(t)}w_m}(t,x)\,dx\bg)
\qquad\label{longchi1}\\
+&\,4\sum_{m}\int(\varphi-\chi)(R^{-1}N(t)x-z)|\nabla (e^{-ix\cdot\beta(t)}w_m(t,x))|^2\,dx\label{longchi4}\\
&\,\qquad\qquad\qquad+2\sum_{m}\int(\varphi-\chi^4)(R^{-1}N(t)x-z)(F(e^{-ix\cdot\beta(t)}w_m)\overline{e^{-ix\cdot\beta(t)}w_m})(t,x)\,dx
\label{longchi2}\\
+&\,4\sum_m\int \tilde{\chi}(x)\mathrm{div}\,\nabla\tilde{\chi}(x)|w_m(t,x)|^2\,dx\label{longchi3}.
\end{align}
By definition we have $\varphi-\chi\geq 0$, thus $\eqref{longchi4}\geq 0$. For \eqref{longchi3}, using the definition of $\chi$ we have
\begin{align}
\eqref{longchi3}\geq -\frac{CN(t)^2}{R^3}\sum_m\int_{R-2\sqrt{R}\leq|R^{-1}N(t)x-z|\leq R-\sqrt{R}}\tilde{\chi}(x)|w_m(t,x)|^2\,dx.
\end{align}
This in turn implies
\begin{align}
&\,\int_0^T\sum_{m,n}\int\int\int\eqref{longchi3}\times \bg(\varphi(R^{-1}N(t)y-z)|w_n(t,y)|^2\bg)\,dzdxdydt\nonumber\\
\geq&\,-CR^{-3}\sum_{m,n}\int_0^T N(t)^3|w_m(t,x)|^2|w_n(t,y)|^2
\nonumber\\
&\qquad\times\bg(|B_R(0)|^{-1}\int\chi(R^{-1}N(t)x-z)\varphi(R^{-1}N(t)y-z)\,dz\bg)\,dxdydt
= o_R(1)K.
\end{align}
By Proposition \ref{gagliardo nirenberg} we know that there exists some $\vare>0$ such that
\begin{align}
\eqref{longchi1}\geq
-\vare \sum_m\int F(\tilde{\chi}e^{-ix\cdot\beta(t)}w_m)\overline{\tilde{\chi}e^{-ix\cdot\beta(t)}w_m}(t,x)\,dx.\label{longchi5}
\end{align}
Now we insert \eqref{longchi5} into the original integral and integrate over $z$, with the following observations: First, if $|x-y|\geq R^2 N(t)^{-1}$, then the supports of $\chi^4(R^{-1}N(t)x-z)$ and $\varphi(R^{-1}N(t)y-z)$ will be disjoint. Second, if $|x-y|\leq 4^{-1}R^2 N(t)^{-1}$, then
$$\inf_{|x-y|\leq 4^{-1}R^2 N(t)^{-1}}|B_R(0)|^{-1}\int\chi^4(R^{-1}N(t)x-z)\varphi(R^{-1}N(t)y-z)dz\gtrsim 1.$$
Therefore,
\begin{align}
&\,\int_0^T\sum_{m,n}\int\int\int\eqref{longchi5}\times \bg(\varphi(R^{-1}N(t)y-z)|w_n(t,y)|^2\bg)\,dzdxdydt\nonumber\\
\geq&\,-C\vare\sum_{m,n}\int_0^T N(t)\int\int_{|x-y|\leq \frac{R^2}{4N(t)}}
(F(w_m)\bar w_m)(t,x)|w_n(t,y)|^2\,dxdydt.
\end{align}
Let us finally take \eqref{longchi2}. Notice that $\varphi-\chi^4$ is supported on $|x|\in[R-2\sqrt{R},R]$, hence we obtain
\begin{align}
|B_R(0)|^{-1}\int (\varphi-\chi^4)(R^{-1}N(t)x-z)\varphi(R^{-1}N(t)y-z)dz\lesssim R^{-1},
\end{align}
which implies
\begin{align}
&\,\int_0^T\sum_{m,n}\int\int\int\eqref{longchi2}\times \bg(\varphi(R^{-1}N(t)y-z)|w_n(t,y)|^2\bg)\,dzdxdydt\nonumber\\
=&\,o_R(1)\sum_{m}\int_0^T N(t)\int\int(F(w_m)\bar w_m)(t,x)\,dxdt.
\end{align}
Summing up at this point, we have thus so far proved
\begin{align}
&\,\int_0^T\eqref{remaining 1}+\eqref{remaining 2}+\eqref{remaining 3}+\eqref{remaining 4}+\eqref{remaining 5}\,dt\nonumber\\
\geq&\,-C\vare\sum_{m,n}\int_0^T N(t)\int\int_{|x-y|\leq \frac{R^2}{4N(t)}}
(F(w_m)\bar w_m)(t,x)|w_n(t,y)|^2\,dxdydt\\
+&\,o_R(1)\sum_{m}\int_0^T N(t)\int(F(w_m)\bar w_m)(t,x)\,dxdt+o_R(1)K+\int_0^T \eqref{remaining 5}\,dt.
\end{align}
Now we recall
$$F(w_m)=-(\sum_{i}|w_i|^2+\sum_{i\neq m}|w_i|)w_m.$$
Hence for any time interval $J$ we have $\|\sum_mF(w_m)\bar w_m\|_{L_{t,x}^1(J)}\sim \|w\|^4_{\diag \ell^2(J)}$. By rewriting $w$ to $w=\vu-P_{\geq \vare_3^{-1}K}\vu$ and using Lemma \ref{arzela ascoli}, Lemma \ref{local constancy lemma}, Lemma \ref{lem before last} and conservation of mass, we conclude that if $\|\vu\|_{\diag\ell^2(J)}=1$ for some interval $J$, then
\begin{align}
&\,-\int_J\sum_{m,n}\int\int_{|x-y|\leq \frac{R^2}{4N(t)}}(F(w_m)\bar w_m)(t,x)|w_n(t,y)|^2\,dxdydt\nonumber\\
\gtrsim&\,-\int_J\sum_{m}\int(F(\vu_m)\bar \vu_m)(t,x)\,dxdt+o_R(1)+o_K(1)\nonumber\\
\gtrsim &\,\|\vu\|^4_{\diag\ell^2(J)}+o_R(1)+o_K(1)= 1+o_R(1)+o_K(1)
\end{align}
as $R,K\to\infty$. We now partition $[0,T]$ into $[0,T]=\cup J_{\ell}$ such that $\|\vu\|_{\diag\ell^2(J_{\ell})}=1$ for all $J_{\ell}$. Then for sufficiently large $R$ and $K$
\begin{align}
&\,-\int_0^T N(t)\sum_{m,n}\int\int_{|x-y|\leq \frac{R^2}{4N(t)}}(F(w_m)\bar w_m)(t,x)|w_n(t,y)|^2\,dxdydt\nonumber\\
\sim&\sum_{J_{\ell}}-N(J_L)\int_{J_L}\sum_{m,n}\int\int_{|x-y|\leq \frac{R^2}{4N(t)}}(F(w_m)\bar w_m)(t,x)|w_n(t,y)|^2\,dxdydt\nonumber\\
\gtrsim&\,\sum_{J_{\ell}}N(J_{\ell})\sim\int_0^TN(t)^3\,dt=K.
\end{align}
In the same manner,
\begin{align}
o_R(1)\sum_{m}\int_0^T N(t)\int(F(w_m)\bar w_m)(t,x)\,dxdt=o_R(1)K.
\end{align}
Let us finally take \eqref{remaining 5}. Direct calculation shows
\begin{align}
&\,\eqref{remaining 5}\nonumber\\
=&\,2\sum_{m,n}\int\int N'(t)\phi(R^{-1}N(t)|x-y|)(x_j-y_j)\mathrm{Im}(\bar w_m\pt_j w_m)(t,x)|w_n(t,y)|^2\,dxdy\nonumber\\
=&\,2|B_R(0)|^{-1}\sum_{m,n}\int\int\int\varphi(R^{-1}N(t)x-z)\varphi(R^{-1}N(t)y-z)\nonumber\\
&\qquad\qquad\qquad\times  N'(t)(x_j-y_j)\mathrm{Im}(\bar w_m\pt_j w_m)(t,x)|w_n(t,y)|^2\,dzdxdy.\label{longchi6}
\end{align}
Again, in order to keep the supports of $\varphi(R^{-1}N(t)x-z)$ and $\varphi(R^{-1}N(t)y-z)$ to be not disjoint, it is necessary that $|x-y|\leq R^2 N(t)^{-1}$. Moreover, one easily verifies that \eqref{longchi6} is Galilean invariant. Combining with Young's inequality we infer that for arbitrary $\gamma>0$ there exists some $C(\gamma)>0$ such that
\begin{align}
&\,\eqref{remaining 5}\nonumber\\
\lesssim &\,|B_R(0)|^{-1}\gamma N(t)
\sum_{m,n}\int\int\int\varphi(R^{-1}N(t)x-z)\varphi(R^{-1}N(t)y-z)\nonumber\\
&\qquad\qquad\qquad\times  |\nabla(e^{-ix\cdot\beta(t)}w_m)(t,x)|^2|w_n(t,y)|^2\,dzdxdy\label{absorb}\\
+ &\,R^4|B_R(0)|^{-1} C(\gamma)N(t)^{-3}N'(t)^2
\sum_{m,n}\int\int\int\varphi(R^{-1}N(t)x-z)\varphi(R^{-1}N(t)y-z)\nonumber\\
&\qquad\qquad\qquad\times  |w_m(t,x)|^2|w_n(t,y)|^2\,dzdxdy.\label{decreasing}
\end{align}
\eqref{absorb} can be absorbed to the first term in \eqref{large portion} by choosing $\gamma$ small. For \eqref{decreasing}, using conservation of mass we have
\begin{align}
\int_0^T\eqref{decreasing}\,dt\lesssim C(\gamma)R^4\int_0^T|N'(t)|\,dt.\label{estimate}
\end{align}
In general the best we can hope for estimating \eqref{estimate} would be $\int_0^T |N'(t)|\,dt\lesssim \int_0^T N(t)^3\,dt= K$, which is obviously insufficient for our purpose. The idea is to modify $N(t)$ in a way such that $N(t)$ is less oscillatory so that $|N'(t)|$ is small. This can be done using the so-called smoothing algorithm initiated by Dodson \cite[Sec. 6.1]{dodson2d}, where the level of the peaks of the function $N(t)$ are inductively reduced. The adaptation of the smoothing method to our model is however verbatim, we thus omit the details here. In a nutshell, after applying the smoothing algorithm we may replace the function $N(t)$ by a new function $\tilde{N}(t)$ such that $\tilde{N}(t)\leq N(t)$ and
$$\int_0^T|\tilde{N}'(t)|\,dt\ll K$$
for sufficiently large $K$. Now we fix $R=R(\vare)$, then applying the smoothing algorithm to get a new function $\tilde{N}(t)$ that is determined by the number $R$. Since $\tilde{N}(t)\leq N(t)$, the number $R$ is not affected. Possibly we also need to shrink the value of $\vare_3$, but this does not effect the results from Lemma \ref{lem small K} since nevertheless we need to take $K$ very large. Summing up, we conclude
\begin{align}
&\,\int_0^T\eqref{remaining 1}+\eqref{remaining 2}+\eqref{remaining 3}+\eqref{remaining 4}+\eqref{remaining 5}\,dt\nonumber\\
\geq&\,2^{-1}C\vare K
+o_R(1)K+o_K(1)K+R^4o_K(1)K.
\end{align}
as $R,K\to\infty$. The desired claim follows by firstly taking $R=R(\vare)$ sufficiently large, then modifying the frequency scale function $N(t)$, shrinking the number $\vare_3$, and finally taking $K=K(R)$ sufficiently large to apply Lemma \ref{lem small K}.
\end{proof}

\begin{proof}[Proof of Theorem \ref{thm large scale resonant}]
This follows immediately from Lemma \ref{impos rapid cascade} and Lemma \ref{lem quasi soliton}.
\end{proof}

\section{Scattering for the focusing cubic NLS on $\R^2\times \T$}\label{section: scattering r2t1}
In this final section we give the proof of Theorem \ref{main thm}. Up to the variational and virial analysis, many of the arguments given in this section are similar to the ones from \cite{CubicR2T1Scattering}, where the authors studied the defocusing analogue of \eqref{cnls}. Nonetheless, the linear profile decomposition established in \cite{CubicR2T1Scattering} is insufficient for our purpose: the linear profile decomposition in \cite{CubicR2T1Scattering} is given at the $L_x^2 H_y^1$-level, while in our case we need a linear profile decomposition at the $H_{x,y}^1$-level in order to apply the variational arguments. We follow the same lines in \cite{KillipVisanKleinGordon,killip_visan_soliton,luo_double_crit} to construct such a linear profile decomposition.

\subsection{Small data well-posedness and stability theories}
We collect in this section the small data and stability theories for \eqref{cnls} and some useful inequalities.

\begin{lemma}[Strichartz estimates on $\R^2\times\T$, \cite{TzvetkovVisciglia2016}]\label{strichartz}
Let $\gamma\in\R$, $s\in[0,1)$ and $p,q,\tilde{p},\tilde{q}$ satisfy $p,\tilde{p}\in(2,\infty]$ and $2p^{-1}+2q^{-1}
=2\tilde{p}^{-1}+2\tilde{q}^{-1}=1-s$. Then for a time interval $I\ni t_0$ we have
\begin{align}
\|e^{i(t-t_0)\dxy}f\|_{L_t^p L_x^q H^\gamma_y(I)}&\lesssim\|f\|_{{H}_x^sH_y^\gamma}.
\end{align}
Moreover, the Strichartz estimate for the Duhamel term
\begin{align}
\|\int_{t_0}^t e^{i(t-s)\dxy}F(s)\,ds\|_{L_t^p L_x^q H^\gamma_y(I)}&\lesssim \|F\|_{L_t^{\tilde{p}'} L_x^{\tilde{q}'} H^\gamma_y(I)}
\end{align}
holds in the case $s=0$.
\end{lemma}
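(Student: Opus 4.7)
The plan is to reduce both estimates to the classical 2D Euclidean Strichartz inequalities via Fourier-series decomposition in the periodic variable $y$, combined with Minkowski's inequality to swap the $\ell^2$-sum over $y$-Fourier modes with the spatial/temporal $L_t^pL_x^q$ norms. Since $\Delta_y$ is diagonalized by $\{e^{iky}\}_{k\in\Z}$, the propagator splits as
\begin{align}
e^{i(t-t_0)\Delta_{x,y}}f(x,y)=\sum_{k\in\Z}e^{-i(t-t_0)k^2}e^{i(t-t_0)\Delta_x}\hat{f}_y(x,k)\,e^{iky},
\end{align}
and since the phases $e^{-i(t-t_0)k^2}$ have modulus one, Plancherel in $y$ gives for every fixed $(t,x)$ the pointwise identity
\begin{align}
\|e^{i(t-t_0)\Delta_{x,y}}f(t,x,\cdot)\|_{H^\gamma_y}^2=\sum_{k\in\Z}\la k\ra^{2\gamma}|e^{i(t-t_0)\Delta_x}\hat{f}_y(x,k)|^2.
\end{align}

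For the homogeneous estimate, I would invoke Minkowski's inequality (valid because $p,q\geq 2$) to exchange $L_t^pL_x^q$ with the $\ell^2_k$-square function, reducing the bound to a weighted $\ell^2_k$-sum of purely 2D Euclidean Strichartz quantities:
\begin{align}
\|e^{i(t-t_0)\Delta_{x,y}}f\|_{L_t^pL_x^qH^\gamma_y(I)}^2\leq\sum_{k\in\Z}\la k\ra^{2\gamma}\|e^{i(t-t_0)\Delta_x}\hat{f}_y(\cdot,k)\|_{L_t^pL_x^q(I\times\R^2)}^2.
\end{align}
The per-$k$ inequality is precisely the non-endpoint 2D Euclidean $\dot H^s$-admissible Strichartz estimate: the hypothesis $2p^{-1}+2q^{-1}=1-s$ with $p>2$ is exactly $\dot H^s$-admissibility in dimension $d=2$, with the restriction $p\in(2,\infty]$ avoiding the false endpoint $(p,q)=(2,\infty)$. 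Summing the squares with the $\la k\ra^{2\gamma}$ weights and applying Plancherel once more collapses the right-hand side to $\|f\|_{H^s_xH^\gamma_y}^2$, closing the homogeneous estimate.

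For the Duhamel estimate at $s=0$, I would follow the same scheme but first conjugate out the periodic phase. Setting $w_k(t,x):=e^{itk^2}\hat{v}_y(t,x,k)$ where $v$ denotes the Duhamel integral of $F$, a direct computation shows $i\pt_tw_k+\Delta_xw_k=e^{itk^2}\hat{F}_y(t,x,k)$ with $w_k(t_0)=0$, and since $|e^{itk^2}|=1$ the 2D Euclidean inhomogeneous Strichartz (requiring both $(p,q)$ and $(\tilde p,\tilde q)$ to be $L^2$-admissible, which is exactly why $s=0$ is imposed) yields the per-$k$ bound $\|\hat{v}_y(\cdot,k)\|_{L_t^pL_x^q}\lesssim\|\hat{F}_y(\cdot,k)\|_{L_t^{\tilde p'}L_x^{\tilde q'}}$. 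Squaring, weighting by $\la k\ra^{2\gamma}$, and summing in $k$, I would then apply Minkowski in the two opposite directions ($p,q\geq 2$ on the left to move $\ell^2_k$ outside, and $\tilde p',\tilde q'\leq 2$ on the right to move $\ell^2_k$ inside) to recover the estimate in the desired norms.

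I do not expect any serious obstacle here: the argument is a systematic commutation of Fourier series in $y$ with Euclidean Strichartz via Minkowski. The only subtleties worth flagging are (i) the exclusion of the 2D endpoint $(2,\infty)$, handled by the hypothesis $p,\tilde p\in(2,\infty]$, and (ii) the restriction to $s=0$ in the Duhamel part, which is genuinely required so that the per-$k$ reduction lands in the classical $L^2$-admissible Keel–Tao framework rather than a more delicate fractional inhomogeneous variant that would force Christ–Kiselev-type machinery.
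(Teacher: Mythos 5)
Your reconstruction is correct and, modulo cosmetic differences, is the standard argument found in the cited reference (the paper itself gives no proof; it simply cites \cite{TzvetkovVisciglia2016}, and your decomposition of the propagator into periodic Fourier modes, Plancherel in $y$, Minkowski to commute $\ell^2_k$ with $L^p_tL^q_x$ using $p,q\geq 2$ on the left and $\tilde p',\tilde q'\leq 2$ on the right, and per-mode 2D Euclidean Strichartz is exactly how that lemma is proved there). Two small points worth making explicit if you were to write this up: the per-$k$ homogeneous estimate at level $\dot H^s_x$ is really the $L^2$-admissible Strichartz bound composed with the Sobolev embedding $\dot W^{s,r}_x\hookrightarrow L^q_x$, which silently imposes $q<\infty$ (so $p>\tfrac{2}{1-s}$ strictly, the range being implicit in the requirement that the pair be well-defined); and your identification of the per-$k$ Duhamel piece should phase-conjugate both sides, i.e.\ note that $e^{itk^2}\hat v_y(t,\cdot,k)$ is the Duhamel integral of $e^{isk^2}\hat F_y(s,\cdot,k)$ for the free 2D flow, which you do — the unimodularity of these phases then makes the extra factor harmless in every norm. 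Your diagnostic about why $s$ must be $0$ in the inhomogeneous estimate (to stay inside the Keel--Tao $L^2$-admissible framework rather than invoking Christ--Kiselev for non-admissible dual pairs) is the right one.
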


\begin{lemma}[Fractional calculus on $\T$, \cite{TzvetkovVisciglia2016}]\label{fractional lemma} For $s\in(\frac12,1]$ we have
\begin{align}
\|u_1 u_2 \|_{\dot{H}^s_y}&\lesssim \|u_1\|_{\dot{H}^s_y}\|u_2\|_{L_y^\infty}+\|u_2\|_{\dot{H}^s_y}\|u_1\|_{L_y^\infty},\\
\|u_1 u_2 u_3\|_{\dot{H}^s_y}&\lesssim\sum_{i=1}^3\|u_i\|_{\dot{H}^s_y}\prod_{j=1,j\neq i}^3\|u_j\|_{L^\infty_y}.
\end{align}
\end{lemma}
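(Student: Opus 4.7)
The plan is to first prove the bilinear fractional Leibniz inequality via a Littlewood--Paley/paraproduct decomposition on $\T$, then deduce the trilinear estimate from two applications of the bilinear one combined with H\"older's inequality.

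For the bilinear estimate, I would employ Bony's paraproduct decomposition and write
\[
u_1 u_2 \;=\; \sum_{N} P_{<N/8} u_1 \cdot P_N u_2 \;+\; \sum_N P_{<N/8} u_2 \cdot P_N u_1 \;+\; \sum_{N\sim M} P_N u_1\, P_M u_2,
\]
where $P_N$ denotes the Littlewood--Paley projector in the $y$-variable on $\T$. The two off-diagonal pieces have output localised at frequency $\sim N$, so by Plancherel together with the square-function characterisation of $\dot H^s_y$ and the pointwise bound $|P_{<N/8}u_i|\lesssim \|u_i\|_{L^\infty_y}$ (via the maximal function), I obtain
\[
\Bigl\| \sum_N P_{<N/8} u_1 \cdot P_N u_2 \Bigr\|_{\dot H^s_y} \;\lesssim\; \|u_1\|_{L^\infty_y}\|u_2\|_{\dot H^s_y},
\]
and the symmetric bound for the other off-diagonal term. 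For the resonant (high-high) part, I decompose further as $\sum_K P_K\bigl(\sum_{N\sim M\gtrsim K} P_N u_1 P_M u_2\bigr)$; after Bernstein and Cauchy--Schwarz in the dyadic parameter, this piece is controlled by $\|u_1\|_{\dot H^s_y}\|u_2\|_{L^\infty_y}$, summing together to the first claimed inequality.

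For the triple product, I apply the bilinear inequality to $u_1 u_2 \cdot u_3$, obtaining
\[
\|u_1 u_2 u_3\|_{\dot H^s_y} \;\lesssim\; \|u_1 u_2\|_{\dot H^s_y}\|u_3\|_{L^\infty_y} + \|u_1 u_2\|_{L^\infty_y}\|u_3\|_{\dot H^s_y},
\]
then expand $\|u_1 u_2\|_{\dot H^s_y}$ once more by the bilinear estimate and bound $\|u_1 u_2\|_{L^\infty_y}\le\|u_1\|_{L^\infty_y}\|u_2\|_{L^\infty_y}$ by H\"older. Relabelling indices produces the symmetric sum over $i=1,2,3$ claimed in the statement.

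The main technical obstacle is the resonant high-high paraproduct: the output frequency $K$ can be arbitrarily small compared to the input frequencies $N\sim M$, so one must exploit summability in the dyadic scales. This is precisely where the assumption $s>1/2$ enters, guaranteeing convergence of $\sum_K K^{2s-1}\cdot(\cdots)$ after Bernstein is applied on $\T$; below that threshold the diagonal sum would diverge logarithmically and a substitute endpoint argument would be required.
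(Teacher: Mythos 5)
The paper does not prove Lemma \ref{fractional lemma}; it is imported wholesale from \cite{TzvetkovVisciglia2016} with only a citation. Your paraproduct argument is therefore a blind reconstruction, and the overall strategy — Bony decomposition into two low-high pieces plus a high-high piece, together with the square-function characterisation of $\dot H^s_y$, followed by an iterated application to reduce the trilinear estimate to the bilinear one — is the standard, correct route and would certainly yield the two inequalities for $s\in(\tfrac12,1]$.

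There is, however, a genuine error in your accounting of where the hypothesis $s>\tfrac12$ is used. You claim that the high-high paraproduct produces a factor $K^{2s-1}$ after Bernstein and that the dyadic sum diverges logarithmically for $s\le\tfrac12$. This is not so. Write the high-high output at frequency $K$ as $P_K\sum_{N\gtrsim K}P_Nu_1\,P_Nu_2$ and bound
\[
\|P_K\Pi^{\mathrm{hh}}(u_1,u_2)\|_{L^2_y}\;\lesssim\;\|u_2\|_{L^\infty_y}\sum_{N\gtrsim K}\|P_Nu_1\|_{L^2_y},
\]
with no Bernstein step needed (on $\T$ the Littlewood--Paley projector is uniformly bounded on $L^\infty_y$, and $L^\infty_y\hookrightarrow L^2_y$ is free since the measure is finite). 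Setting $a_N:=N^s\|P_Nu_1\|_{L^2_y}$, the squared $\dot H^s_y$ norm of the high-high piece is controlled by $\|u_2\|_{L^\infty_y}^2\sum_K\bigl(\sum_{N\gtrsim K}K^sN^{-s}a_N\bigr)^2$, and the kernel $K^sN^{-s}\mathds{1}_{N\gtrsim K}$ satisfies the Schur test whenever $s>0$, so the high-high term converges for every $s\in(0,1]$, not only for $s>\tfrac12$. In other words, the first inequality in the lemma actually holds for all $s\in(0,1]$; the restriction to $s>\tfrac12$ in the statement is a hypothesis the authors impose because it is what they need elsewhere (in particular, $H^s(\T)\hookrightarrow L^\infty(\T)$ requires $s>\tfrac12$, and this embedding is invoked repeatedly in the nonlinear estimates and the stability theory), not a threshold at which the paraproduct sum degenerates. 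I would strike the last paragraph of your proposal and simply observe that the Schur-type summation works for any positive $s$; apart from that, the argument is sound.
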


\begin{lemma}[Small data well-posedness, \cite{TzvetkovVisciglia2016}]\label{lemma cnls well posedness} Let $I$ be an open interval containing $0$. Define
\begin{align}
X(I)&:=(L_t^\infty L_x^2 {H}_y^1(I)\cap L_t^{2^+} L_x^{\infty^-} {H}_y^1(I))\cap (L_t^\infty \dot{H}_x^1 L_y^2(I)
\cap L_t^{2^+} \dot{W}_x^{1,\infty^-} L_y^2(I))\nonumber\\
&=:S_0 {H}_y^1(I)\cap S_1 L_y^2(I).
\end{align}
Let also $s\in(\frac12,1]$. Assume that
\begin{align}
\|U_0\|_{H_{x,y}^1}\leq A
\end{align}
for some $A>0$. Then there exists $\delta=\delta(A)$ such that if
\begin{align}
\|e^{it\Delta}U_0\|_{\diag  H_y^{s}(I) }\leq \delta,
\end{align}
then there exists a unique solution $U\in X(I)$ of \eqref{cnls} with $U(0)=U_0$ such that
\begin{align}
\|U\|_{X(I)}&\lesssim A,\\
\|U\|_{\diag  H_y^{s}(I)}&\leq 2\|e^{it\Delta}U_0\|_{\diag  H_y^{s}(I)}.
\end{align}
\end{lemma}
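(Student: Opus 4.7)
The plan is to run a standard contraction mapping argument on the Duhamel formulation
$$\Phi(U)(t) = e^{it\Delta}U_0 + i\int_0^t e^{i(t-s)\Delta}\bigl(|U|^2 U\bigr)(s)\,ds$$
in a suitable closed ball of $X(I)$ augmented with $L^4_{t,x}H^1_y$ control. The natural admissible pair to exploit here is $(4,4)$, which is $L^2$-admissible on $\mathbb{R}^2$ (since $2/4+2/4=1$) with dual exponent $(4/3,4/3)$. Applying Lemma \ref{strichartz} (both to the $H^1_y$-valued and $L^2_y$-valued sides, the latter combined with a $\nabla_x$) gives schematically
$$\|\Phi(U)\|_{X(I)\cap L^4_{t,x}H^1_y(I)} \lesssim \|U_0\|_{H^1_{x,y}} + \||U|^2 U\|_{L^{4/3}_{t,x}H^1_y(I)} + \|\nabla_x(|U|^2 U)\|_{L^{4/3}_{t,x}L^2_y(I)},$$
together with the parallel inequality
$$\|\Phi(U)\|_{\diag H^s_y(I)} \lesssim \|e^{it\Delta}U_0\|_{\diag H^s_y(I)} + \||U|^2 U\|_{L^{4/3}_{t,x}H^s_y(I)}.$$

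Next I would estimate the nonlinear piece via Lemma \ref{fractional lemma} together with the Sobolev embedding $H^s(\mathbb{T})\hookrightarrow L^\infty(\mathbb{T})$ for $s>1/2$. Pointwise in $(t,x)$ these give
$$\||U|^2 U\|_{H^1_y}\lesssim \|U\|_{H^1_y}\|U\|_{H^s_y}^2, \quad \||U|^2 U\|_{H^s_y}\lesssim \|U\|_{H^s_y}^3, \quad \bigl\||U|^2\nabla_x U\bigr\|_{L^2_y}\lesssim \|U\|_{H^s_y}^2\|\nabla_x U\|_{L^2_y}.$$
Integrating by H\"older in $(t,x)$ with $4/3=3\cdot(1/4)\cdot\textrm{(three factors)}$ yields
$$\||U|^2 U\|_{L^{4/3}_{t,x}H^1_y}+\|\nabla_x(|U|^2 U)\|_{L^{4/3}_{t,x}L^2_y}\lesssim \|U\|_{\diag H^s_y}^{2}\,\|U\|_{L^4_{t,x}H^1_y\cap L^4_{t,x}\dot W^{1,4}_x L^2_y},$$
and
$$\||U|^2 U\|_{L^{4/3}_{t,x}H^s_y}\lesssim \|U\|_{\diag H^s_y}^3.$$

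Combining, I would work in the closed set
$$B:=\bigl\{U\in X(I)\cap L^4_{t,x}H^1_y(I): \|U\|_{X(I)\cap L^4_{t,x}H^1_y(I)}\leq 2CA,\; \|U\|_{\diag H^s_y(I)}\leq 2\delta\bigr\}.$$
The above inequalities read $\|\Phi(U)\|_{X(I)\cap L^4_{t,x}H^1_y}\lesssim A+\delta^2\|U\|_{X(I)\cap L^4_{t,x}H^1_y}$ and $\|\Phi(U)\|_{\diag H^s_y}\lesssim \delta+\delta^3$. Choosing $\delta=\delta(A)$ so that $C\delta^2\leq 1/2$ makes $\Phi$ map $B$ into itself; an analogous difference estimate, using the algebraic identity
$$|U|^2 U-|V|^2 V = O\bigl((|U|+|V|)^2(U-V)\bigr)$$
and distributing two of the three factors into $\diag H^s_y$ (for smallness $\delta^2$) leaving one factor of $U-V$ in the relevant Strichartz norm, produces a contraction (for instance in the $\diag H^s_y$ metric, promoted to the full norm via the \emph{a priori} bound) with constant $\leq 1/2$. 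The resulting fixed point is the unique solution in $B$, and the stated estimates $\|U\|_{X(I)}\lesssim A$ and $\|U\|_{\diag H^s_y}\leq 2\|e^{it\Delta}U_0\|_{\diag H^s_y}$ are precisely the bootstrap bounds from $B$.

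The main technical obstacle is the \emph{regularity gap} between the norm in which smallness is assumed ($\diag H^s_y$ at the low level $s>1/2$) and the target space $X(I)$ (which carries the full $H^1_y$ regularity and only an \emph{a priori} bound of size $A$). The key device that closes the argument is the bilinear-type distribution of the three nonlinear factors afforded by Lemma \ref{fractional lemma}: two factors are absorbed by the small $\diag H^s_y$ norm while the third carries the $H^1_y$ (or $\nabla_x$) regularity, yielding a prefactor $\delta^2$ in front of the strong norm and $\delta^3$ for the scattering-size norm. One minor subtlety is that the definition of $S_1$ involves the inhomogeneous $\dot{H}^1$ norm, so one must apply the $S_1$-component of the Strichartz estimate to $\nabla_x U$ and use the product rule on $\nabla_x(|U|^2 U)$; beyond this, all the other ingredients are standard.
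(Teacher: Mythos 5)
The paper does not actually prove this lemma: it is stated with a citation to \cite{TzvetkovVisciglia2016}, so there is no ``paper proof'' to compare against directly. Your reconstruction is the correct and standard small-data contraction argument for this setting, and it matches the scheme used in the cited reference (and in \cite{CubicR2T1Scattering} for the defocusing analogue). The essential ingredients you identify are exactly the right ones: Strichartz for the $L^2$-admissible pair $(4,4)$ on $\R^2$ taken in both $H_y^1$-valued and $L_y^2$-valued form (with $\nabla_x$ commuted through the linear propagator for the $S_1 L_y^2$ component); the fractional Leibniz rule on $\T$ (Lemma~\ref{fractional lemma}) combined with the embedding $H_y^s\hookrightarrow L_y^\infty$ for $s>1/2$; and the key structural observation that in each cubic term two factors are absorbed by the small $\diag H_y^s$ norm, leaving one factor to carry the $H^1_y$ or $\nabla_x$ regularity, so the strong-norm bound only requires smallness of the weak norm. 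This is precisely how one bridges the ``regularity gap'' you correctly flag between the smallness assumption at level $H_y^s$ and the $H^1_{x,y}$-level target space.

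Two minor imprecisions worth noting, neither of which affects correctness: (i) to obtain the conclusion $\|U\|_{\diag H_y^s(I)}\leq 2\|e^{it\Delta}U_0\|_{\diag H_y^s(I)}$ rather than merely $\leq 2\delta$, the closed set $B$ should impose $\|U\|_{\diag H^s_y(I)}\leq 2\|e^{it\Delta}U_0\|_{\diag H^s_y(I)}$ (call this $2\eta$), not $2\delta$; the self-mapping estimate then reads $\eta + C(2\eta)^3\leq 2\eta$ provided $\eta\leq\delta$ is small, which is the standard bootstrap. (ii) The notation $L^4_{t,x}\dot W^{1,4}_x L^2_y$ should simply be $L^4_t\dot W^{1,4}_x L^2_y$ (i.e.\ $\|\nabla_x U\|_{L^4_{t,x}L^2_y}$), which is controlled by $S_1 L_y^2$ by interpolation between the two admissible endpoints in the definition of $S_1$. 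With these cosmetic fixes the proof is complete and faithfully reproduces the standard argument.
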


\begin{lemma}[Scattering criterion]\label{scattering crit}
If $U$ is a global solution of \eqref{cnls} and there exists some $s\in(\frac12,1]$ such that
\begin{align}
\|U\|_{L_{t,x}^4 H_y^{s}(\R)}+\|U\|_{L_t^\infty H_{x,y}^1(\R)}<\infty,\label{scattering threshold}
\end{align}
then $U$ is scattering in $H_{x,y}^1$. Moreover, we have
\begin{align}
\|U\|_{L_{t,x}^4 H_y^{1}(\R)}\leq C(\|U\|_{L_{t,x}^4 H_y^{s}(\R)},\|U\|_{L_t^\infty H_{x,y}^1(\R)}).\label{scattering threshold 2}
\end{align}
\end{lemma}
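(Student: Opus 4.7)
The plan has two stages. First, I would bootstrap the hypothesized $L^4_{t,x}H^s_y$-bound to a global $L^4_{t,x}H^1_y$-bound by running Strichartz on a finite partition of $\R$. Second, I would deduce scattering by a Duhamel/Cauchy argument, using the dual Strichartz estimate to show that $e^{-it\Delta_{x,y}}U(t)$ converges in $H^1_{x,y}$ as $t\to\pm\infty$.

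For the bootstrap, set $M:=\|U\|_{L^4_{t,x}H^s_y(\R)}$ and $A:=\|U\|_{L^\infty_tH^1_{x,y}(\R)}$, and fix a small $\eta>0$ to be chosen. Since $M<\infty$, I can partition $\R=\bigcup_{j=1}^N I_j$ into consecutive intervals $I_j=[t_j,t_{j+1}]$ with $\|U\|_{L^4_{t,x}H^s_y(I_j)}\leq\eta$, where $N\lesssim (M/\eta)^4$. On each $I_j$, Duhamel's formula together with the Strichartz estimate of Lemma \ref{strichartz} (taking $\gamma=1$ and the $L^2$-admissible pair $(p,q)=(4,4)$ on both sides) gives
\begin{align*}
\|U\|_{L^4_{t,x}H^1_y(I_j)}\lesssim \|U(t_j)\|_{H^1_{x,y}}+\||U|^2U\|_{L^{4/3}_{t,x}H^1_y(I_j)}.
\end{align*}
Pointwise in $(t,x)$, Lemma \ref{fractional lemma} combined with the one-dimensional Sobolev embedding $H^s_y\hookrightarrow L^\infty_y$ (valid since $s>\tfrac12$) yields $\||U|^2U(t,x,\cdot)\|_{H^1_y}\lesssim \|U(t,x,\cdot)\|_{H^1_y}\|U(t,x,\cdot)\|_{H^s_y}^2$. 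H\"older in $(t,x)$ with exponents $(4,4,4)$ then gives
\begin{align*}
\||U|^2U\|_{L^{4/3}_{t,x}H^1_y(I_j)}\lesssim \|U\|_{L^4_{t,x}H^1_y(I_j)}\|U\|_{L^4_{t,x}H^s_y(I_j)}^2\leq \eta^2\|U\|_{L^4_{t,x}H^1_y(I_j)}.
\end{align*}
Choosing $\eta$ small enough to absorb the right-hand side into the left, I obtain $\|U\|_{L^4_{t,x}H^1_y(I_j)}\lesssim A$ on each piece; summing the $N$ contributions yields \eqref{scattering threshold 2}.

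For scattering, Duhamel's formula gives for any $t_1<t_2$
\begin{align*}
e^{-it_2\Delta_{x,y}}U(t_2)-e^{-it_1\Delta_{x,y}}U(t_1)=i\int_{t_1}^{t_2}e^{-is\Delta_{x,y}}(|U|^2U)(s)\,ds.
\end{align*}
Applying the dual Strichartz estimate of Lemma \ref{strichartz} and the nonlinear estimate established above,
\begin{align*}
\|e^{-it_2\Delta_{x,y}}U(t_2)-e^{-it_1\Delta_{x,y}}U(t_1)\|_{H^1_{x,y}}\lesssim \|U\|_{L^4_{t,x}H^s_y([t_1,t_2])}^2\,\|U\|_{L^4_{t,x}H^1_y(\R)},
\end{align*}
which tends to $0$ as $t_1,t_2\to+\infty$ by dominated convergence since $M<\infty$. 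Hence $\Phi^{+}:=\lim_{t\to+\infty}e^{-it\Delta_{x,y}}U(t)$ exists in $H^1_{x,y}$, and symmetrically for $\Phi^-$; this is precisely scattering in $H^1_{x,y}$. The main technical point is the fractional product-rule step: one must verify that Lemma \ref{fractional lemma} distributes exactly one $H^1_y$-derivative onto a single factor while the remaining two are estimated in $L^\infty_y$, generating the critical $\eta^2$ smallness that drives the bootstrap. The strict inequality $s>\tfrac12$ in the hypothesis is precisely what guarantees $H^s_y\hookrightarrow L^\infty_y$, so the argument covers the whole stated range.
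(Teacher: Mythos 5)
Your route differs from the paper's in both pieces, and one of them has a genuine gap.

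For \eqref{scattering threshold 2}, the paper does not run a small-interval absorption bootstrap. Instead, it first writes one global Duhamel/Strichartz step to obtain $\|U\|_{L_t^{8/3}L_x^8 H_y^s(\R)}\lesssim \|U\|_{L_t^\infty H_{x,y}^1}+\|U\|_{L_{t,x}^4 H_y^s}^3$ (this closes immediately because the quantity being estimated does not appear on the right), and then a second global Duhamel/Strichartz step in which $\||U|^2\pt_y U\|_{L_{t,x}^{4/3}L_y^2}$ is bounded by $\|\pt_y U\|_{L_t^\infty L_{x,y}^2}\|U\|_{L_t^{8/3}L_x^8 H_y^s}^2$ — again the $L_{t,x}^4 H_y^1$-norm does not appear on the right, so no absorption is needed. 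Your version is also correct, provided you record that $\|U\|_{L_{t,x}^4 H_y^1(I_j)}$ is a priori finite on each compact $I_j$ (from the local well-posedness theory, Lemma \ref{lemma cnls well posedness}), which is what licenses the absorption; that is worth saying explicitly. The two routes trade a partition/absorption argument against keeping track of an extra Strichartz exponent.

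For the scattering conclusion, the paper simply cites \cite[Thm.\ 2.9]{CubicR2T1Scattering}, whereas you give a self-contained Cauchy argument; that is a reasonable choice, but as written it is incomplete. Your dual Strichartz estimate with $\gamma=1$ (Lemma \ref{strichartz}) controls only $\|e^{-it_2\Delta_{x,y}}U(t_2)-e^{-it_1\Delta_{x,y}}U(t_1)\|_{L_x^2 H_y^1}$, i.e.\ the $y$-derivative part of $H_{x,y}^1$. The full claim that $e^{-it\Delta_{x,y}}U(t)$ is Cauchy in $H_{x,y}^1$ also requires the $\dot H_x^1 L_y^2$-component. For that you must estimate $\|\nabla_x(|U|^2U)\|_{L_{t,x}^{4/3}L_y^2([t_1,t_2])}\lesssim\|\nabla_x U\|_{L_{t,x}^4 L_y^2([t_1,t_2])}\|U\|_{L_{t,x}^4 H_y^s([t_1,t_2])}^2$, and hence you need $\|\nabla_x U\|_{L_{t,x}^4 L_y^2(\R)}<\infty$, which your bootstrap (confined to $H_y^1$) does not supply. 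The fix is easy — run the same partition/absorption argument for $\nabla_x U$, using $\nabla_x(|U|^2U)=O(|U|^2\nabla_x U)$ — but it must be stated, otherwise the asserted convergence in $H_{x,y}^1$ is not actually proved.
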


\begin{proof}
That \eqref{scattering threshold} implies scattering was proved by \cite[Thm. 2.9]{CubicR2T1Scattering}. Now using Duhamel's formula and Strichartz estimate we also infer that \eqref{scattering threshold} implies
$$\|U\|_{L_t^\frac{8}{3} L_x^{8} H_y^s(\R)}\leq C(\|U\|_{L_{t,x}^4 H_y^{s}(\R)},\|U\|_{L_t^\infty H_{x,y}^1(\R)})<\infty.$$
Then by Duhamel's formula, Strichartz and H\"older
\begin{align}
\|U\|_{L_{t,x}^4 H_y^{1}(\R)}
&\lesssim \|U\|_{L_t^\infty H_{x,y}^1(\R)}+\||U|^3\|_{L_{t,x}^{\frac43}L_y^2(\R)}+\||U|^2\pt_y U\|_{L_{t,x}^{\frac43}L_y^2(\R)}\nonumber\\
&\lesssim\|U\|_{L_t^\infty H_{x,y}^1(\R)}(1+\|U\|^2_{L_t^\frac{8}{3} L_x^{8} H_y^s(\R)}),
\end{align}
which implies \eqref{scattering threshold 2}.
\end{proof}

\begin{lemma}[Long time stability, \cite{CubicR2T1Scattering}]\label{lem stability cnls}
Let $U$ be a solution of \eqref{cnls} on the time interval $I\ni 0$ and let $Z$ be a solution of
\begin{align}
i\pt_t Z+\Delta_{x,y} Z=-|Z|^2 Z+e
\end{align}
on $I$. Let also $s\in(\frac12,1]$ be given. Assume that
\begin{align}
\|U\|_{L_t^\infty L_x^2 H_y^{s}(I)}&\leq M,\label{cond1 cnls}\\
\|Z\|_{\diag H_y^{s} (I)}&\leq L,\label{cond2 cnls}\\
\|Z(0)-U(0)\|_{L_x^2 H_y^{s}}&\leq M'.\label{cond3 cnls}
\end{align}
Assume also the smallness conditions
\begin{alignat}{2}
\|e^{it\Delta}(Z(0)-U(0) )&\|_{\diag H_y^{s}(I)}&&\leq \vare,\label{small initial long cnls}\\
\|e&\|_{L_{t,x}^{\frac{4}{3}}H_y^{s}(I)}&&\leq \vare\label{small error long cnls}
\end{alignat}
for some $0<\vare\leq \vare_1$ where $\vare_0=\vare_0(M,M',L)>0$ is a small constant. Then
\begin{align}
\|Z-U\|_{\diag H_y^{s}(I)}&\leq C(M,M',L)\vare,\label{long erro1 cnls}\\
\|Z-U\|_{S_0H_y^{s}(I)}&\leq C(M,M',L)M',\label{long erro2 cnls}\\
\|U\|_{S_0H_y^{s}(I)}&\leq C(M,M',L)\label{long erro3 cnls}.
\end{align}
\end{lemma}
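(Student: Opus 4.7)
The plan is to run a perturbative/bootstrap argument after partitioning $I$ into finitely many subintervals on which the $\diag H_y^s$-norm of $Z$ is small. Set $W:=Z-U$, so that $W$ satisfies
\begin{align}
i\pt_t W+\dxy W=-(|Z|^2Z-|U|^2U)+e=:\mathcal N(W,Z)+e,\qquad W(0)=Z(0)-U(0).
\end{align}
First I would fix a small parameter $\eta=\eta(M,M')>0$ to be chosen, and use \eqref{cond2 cnls} to partition $I=\bigcup_{j=1}^{J}I_j$ with consecutive intervals $I_j=[t_{j-1},t_j]$ and $\|Z\|_{\diag H_y^s(I_j)}\le\eta$, where $J=J(L,\eta)$.

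Next, on each $I_j$ the Strichartz estimate (Lemma \ref{strichartz} with $\gamma=s$) applied to the Duhamel formula for $W$ gives
\begin{align}
\|W\|_{\diag H_y^s(I_j)}\lesssim \|e^{i(t-t_{j-1})\dxy}W(t_{j-1})\|_{\diag H_y^s(I_j)}+\|\mathcal N(W,Z)\|_{L_{t,x}^{4/3}H_y^s(I_j)}+\|e\|_{L_{t,x}^{4/3}H_y^s(I_j)}.
\end{align}
Since $s>\tfrac12$, the algebra/fractional calculus of Lemma \ref{fractional lemma} together with H\"older in $(t,x)$ produces
\begin{align}
\|\mathcal N(W,Z)\|_{L_{t,x}^{4/3}H_y^s(I_j)}\lesssim \bigl(\|Z\|_{\diag H_y^s(I_j)}^2+\|W\|_{\diag H_y^s(I_j)}^2\bigr)\|W\|_{\diag H_y^s(I_j)}.
\end{align}
Combined with the smallness $\|Z\|_{\diag H_y^s(I_j)}\le \eta$, a standard continuity argument (assuming $\eta$ is small enough compared to the implicit constants) closes to
\begin{align}
\|W\|_{\diag H_y^s(I_j)}\le C\bigl(\|e^{i(t-t_{j-1})\dxy}W(t_{j-1})\|_{\diag H_y^s(I_j)}+\|e\|_{L_{t,x}^{4/3}H_y^s(I_j)}\bigr).
\end{align}

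The main obstacle is then the iteration, i.e.\ controlling the linear-evolution term $\|e^{i(t-t_{j-1})\dxy}W(t_{j-1})\|_{\diag H_y^s(I_j)}$ for $j\ge 2$ in terms of $\varepsilon$ and the data on $[0,t_{j-1}]$. To do this, I would write
\begin{align}
e^{i(t-t_{j-1})\dxy}W(t_{j-1})=e^{it\dxy}W(0)-i\int_0^{t_{j-1}}e^{i(t-\tau)\dxy}\bigl(\mathcal N(W,Z)+e\bigr)(\tau)\,d\tau,
\end{align}
and apply Strichartz again, bounding the Duhamel contributions from previous subintervals by the already-controlled $\diag H_y^s$-norms of $W$ there. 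This yields a recursive inequality of the form $a_j\le C(\eta\, a_j+a_{j-1})+C\varepsilon$, which combined with $a_1\lesssim\varepsilon$ from \eqref{small initial long cnls}--\eqref{small error long cnls} gives $a_j\le (2C)^{j}\varepsilon$. Summing over the $J=J(L,\eta)$ subintervals produces
\begin{align}
\|W\|_{\diag H_y^s(I)}\le C(M,M',L)\,\varepsilon,
\end{align}
provided $\varepsilon\le\varepsilon_1(M,M',L)$ is small enough to keep the bootstrap assumption valid on every $I_j$; this is \eqref{long erro1 cnls}.

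Finally, \eqref{long erro2 cnls} and \eqref{long erro3 cnls} follow by upgrading to the full $S_0H_y^s$-norm: apply Strichartz in an arbitrary $S_0$-admissible pair to the Duhamel formula for $W$ on $I$, use the bound just established on $\|W\|_{\diag H_y^s(I)}$, use \eqref{cond3 cnls} to control $\|W(0)\|_{L_x^2H_y^s}$, and use Lemma \ref{fractional lemma} to estimate $\|\mathcal N(W,Z)\|_{L_{t,x}^{4/3}H_y^s(I)}$ by (a polynomial in $L$ times) $\|W\|_{\diag H_y^s(I)}$. This gives \eqref{long erro2 cnls}. Then \eqref{long erro3 cnls} follows from the triangle inequality $\|U\|_{S_0H_y^s(I)}\le \|Z\|_{S_0H_y^s(I)}+\|W\|_{S_0H_y^s(I)}$, where $\|Z\|_{S_0H_y^s(I)}$ is bounded by $C(L,M)$ through one more application of Strichartz to the equation satisfied by $Z$, using \eqref{cond1 cnls}, \eqref{cond2 cnls} and \eqref{small error long cnls}.
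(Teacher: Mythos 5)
The paper does not actually supply a proof of this lemma: it is quoted from~\cite{CubicR2T1Scattering} (the defocusing analogue), so there is no in-text argument to compare against. That said, your proposal reproduces exactly the standard long-time stability scheme that underlies the cited result (and its Euclidean antecedents), and I see no gap. The key ingredients are all in place: the decomposition of $I$ into $J=J(L,\eta)$ subintervals on which $\|Z\|_{\diag H_y^s}$ is small; the trilinear estimate for the difference nonlinearity via Strichartz (Lemma~\ref{strichartz}) plus the fractional Leibniz rule on $\T$ and the algebra $H_y^s\hookrightarrow L_y^\infty$ for $s>\tfrac12$ (Lemma~\ref{fractional lemma}); the continuity/bootstrap closure on each piece; the Duhamel rewrite of the free evolution of $W(t_{j-1})$ back to time $0$ so that the accumulated errors yield a geometric recursion $a_j\lesssim 2^{j}\varepsilon$; and finally the upgrade from the scattering norm to the full $S_0H_y^s$ Strichartz norm using the initial-data bound $\|W(0)\|_{L_x^2H_y^s}\le M'$ and the bound $\|Z(0)\|_{L_x^2H_y^s}\le M+M'$ inferred from~\eqref{cond1 cnls} and~\eqref{cond3 cnls}. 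One small bookkeeping point worth making explicit when you formalize this: the quantity $a_{j-1}$ in your recursion should be the cumulative Duhamel contribution from $[0,t_{j-1}]$, i.e.\ $\sum_{i<j}\|\mathcal N(W,Z)+e\|_{L_{t,x}^{4/3}H_y^s(I_i)}$, rather than just the norm on $I_{j-1}$; this is precisely what is needed for the recursion to close after applying Strichartz to the Duhamel rewrite, and it still yields geometric growth because the number of subintervals $J$ is fixed in terms of $L$, so one may shrink $\varepsilon_1$ accordingly. Also note the cross term $\|Z\|\|W\|^2$ is absorbed into your displayed bound by AM--GM, which is fine but worth saying. Overall the proposal is correct and is the same argument the cited reference uses.
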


\subsection{Linear profile decomposition}
In this section we establish a linear profile decomposition for a bounded sequence in $H_{x,y}^1$. Firstly we fix some notation. For each $j\in\Z$, define $\mC_j$ by
\begin{align*}
\mC_j:=\bg\{\Pi_{i=1}^2 [2^jk_i,2^j(k_i+1))\subset\R^2:k\in\Z^2\bg\}
\end{align*}
and $\mC:=\cup _{j\in\Z}\,\mC_j$. Given $Q\in\mC$ we define $f_Q$ by $\mathcal{F}_x (f_Q):=\chi_Q \mathcal{F}_x f$, where $\chi_Q$ is the characteristic function of the cube $Q$.

\begin{lemma}[Improved Strichartz estimate, \cite{CubicR2T1Scattering}]\label{cnls lem improved strichartz}
For $f\in H_{x,y}^1(\R^2\times\T)$, we have the following refined Strichartz estimate
\begin{align}
\|e^{it\Delta_x}f\|_{L_{t,x,y}^4(\R)}\lesssim \|f\|^{\frac34}_{L_x^2 H_y^1}
\bg(\sup_{Q\in\mC}|Q|^{-\frac{3}{22}}\|e^{it\Delta_x}f_Q\|_{L_{t,x,y}^{\frac{11}{2}}(\R)}\bg)^{\frac{1}{4}}.
\end{align}
\end{lemma}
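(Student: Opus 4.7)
The plan is to reduce the inequality to the classical two-dimensional refined Strichartz estimate of B\'egout--Vargas type by slicing in the torus variable $y\in\T$. Since $e^{it\Delta_x}$ acts only on the $x$-variable, for each fixed $y$ the map $(t,x)\mapsto(e^{it\Delta_x}f)(t,x,y)$ is a free 2D Schr\"odinger evolution of the slice $f(\cdot,y)\in L^2(\R^2)$, and the frequency-localized piece $e^{it\Delta_x}f_Q(\cdot,\cdot,y)$ is the corresponding $x$-Fourier restriction to the cube $Q$.

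First I would invoke the two-dimensional refined Strichartz inequality
\[
\|e^{it\Delta}g\|_{L^4_{t,x}(\R\times\R^2)}^{4}\lesssim \|g\|_{L^2_x}^{3}\sup_{Q\in\mC}|Q|^{-3/22}\|e^{it\Delta}g_Q\|_{L^{11/2}_{t,x}},
\]
which is a standard B\'egout--Vargas-type bound: one expands $|e^{it\Delta}g|^{2}$ via a Whitney decomposition in pairs of frequency cubes, applies the 2D bilinear Strichartz for transversely-separated cubes, and interpolates. The exponents $11/2$ and $3/22$ are exactly those forced by the $L^{2}$-scaling of the 2D Schr\"odinger group.

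To transfer this to $\R^{2}\times\T$, I would proceed in two stages. On the left-hand side, the elementary embedding $L^{11/2}(\T)\hookrightarrow L^{4}(\T)$ -- valid because $|\T|<\infty$ and $11/2>4$ -- gives $\|e^{it\Delta_x}f\|_{L^4_{t,x,y}}\lesssim\|e^{it\Delta_x}f\|_{L^4_{t,x}L^{11/2}_y}$. Then I would apply a vector-valued upgrade of the scalar refined Strichartz with target Banach space $X=L^{11/2}(\T)$, which produces
\[
\|e^{it\Delta_x}f\|_{L^4_{t,x}L^{11/2}_y}^{4}\lesssim\|f\|_{L^2_xL^{11/2}_y}^{3}\sup_{Q}|Q|^{-3/22}\|e^{it\Delta_x}f_Q\|_{L^{11/2}_{t,x,y}}.
\]
The $y$-variable is passive throughout, since it commutes with $e^{it\Delta_x}$ and with the $x$-frequency projections, so the bilinear Whitney scheme can be performed slice-by-slice in $y$ and reassembled via H\"older in $y$; the exponent $11/2$ on $L^{11/2}_y$ is chosen precisely so as to match the $L^{11/2}_{t,x}$-exponent of the scalar refined Strichartz and avoid any $y$-loss. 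Finally, the 1D Sobolev embedding $H^{1}(\T)\hookrightarrow L^{11/2}(\T)$ converts $L^{11/2}_y$ into $H^1_y$ on the right, and a fourth root yields the claim.

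The main obstacle is the vector-valued upgrade in the second stage, since $L^{11/2}(\T)$ is not a Hilbert space and the scalar proof uses $L^2_x$-Plancherel orthogonality of $(g_Q)_Q$ and $L^p$-duality at several points. In practice one has to carry out the argument slice-by-slice and then commute norms carefully: the delicate point is to interchange the supremum over dyadic frequency cubes with the $y$-integration, which I would handle by exploiting the $L^2_x$-orthogonality of the pieces $f_Q$ (which survives fibration in $y$) together with a dyadic pigeonholing on the scale of $Q$, so that the supremum is effectively reduced to a single scale without logarithmic loss.
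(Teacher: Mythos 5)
You have correctly identified the crux as a vector-valued upgrade of the Euclidean refined Strichartz estimate, but that is precisely where the argument breaks down, and the repair you sketch does not fix it. The scalar B\'egout--Vargas type bound is proved by expanding $|e^{it\Delta}g|^2$ over Whitney pairs of cubes and exploiting near-orthogonality of the bilinear pieces $(e^{it\Delta}g_Q)\overline{(e^{it\Delta}g_{Q'})}$ in $L^2_{t,x}$ via Plancherel; this is a genuinely Hilbertian step. Running it with values in $L^{11/2}(\T)$ would force you to control $\| |e^{it\Delta_x}f|^2 \|_{L^2_{t,x}L^{11/4}_y}$, and the inner $L^{11/4}_y$ norm sits under the $L^2_{t,x}$ norm and destroys the $(t,x)$-Plancherel. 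The ``$L^2_x$-orthogonality of the $f_Q$'s'' you invoke is not the orthogonality that the Whitney argument actually uses --- what is needed is orthogonality of the \emph{quadratic} Whitney pieces in space-time $L^2$, and this is lost when the values lie in a non-Hilbert space. Your slice-by-slice fall-back has an independent obstruction: after applying the scalar bound at each fixed $y$ and then H\"older in $y$, you must estimate $\int_\T \bigl(\sup_{Q\in\mC}|Q|^{-3/22}\|e^{it\Delta_x}f_Q(\cdot,y)\|_{L^{11/2}_{t,x}}\bigr)^{p'}\,dy$, and the optimizing cube $Q$ depends on $y$; the inequality $\int\sup \geq \sup\int$ goes the wrong way, so the supremum cannot be pulled outside the $y$-integral. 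Pigeonholing the side length of $Q$ does not help, because the $y$-dependence of the optimizer enters through the translation of $Q$ as much as through its scale.

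The route that works --- and the one used for this lemma in \cite{CubicR2T1Scattering} --- first performs a Littlewood--Paley decomposition in $y$ and then invokes the $L^2(\T)$-valued (hence Hilbert-valued) refined Strichartz, for which the Whitney/bilinear argument survives verbatim. One writes $f=\sum_N P^y_N f$, uses periodic Bernstein to dominate $L^4_y$ by $N^{1/4}$ times $L^2_y$ pointwise in $(t,x)$, applies the $L^2_y$-valued refined Strichartz at each dyadic $y$-scale $N$, passes from $L^{11/2}_{t,x}L^2_y$ to $L^{11/2}_{t,x,y}$ by H\"older on the finite-measure torus, and commutes $P^y_N$ out of the $L^{11/2}_{t,x,y}$ norm using the $y$-multiplier theorem. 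The factor $\|f\|_{L^2_x H^1_y}^{3/4}$ then comes from summing $\sum_N N^{1/4}\|P^y_N f\|_{L^2_{x,y}}^{3/4}\lesssim \bigl(\sum_N N^2\|P^y_Nf\|_{L^2_{x,y}}^2\bigr)^{3/8}$ by H\"older in the dyadic index. In particular, the $H^1_y$-regularity is used as summability across the $y$-Littlewood--Paley scale and not as a Sobolev embedding $H^1_y\hookrightarrow L^{11/2}_y$ as in your proposal; this also explains why $H^1_y$ attaches to the $L^2_x$ factor on the right-hand side while the cube-refined term carries the plain $L^{11/2}_{t,x,y}$ norm with no $y$-derivative at all.
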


\begin{lemma}[Inverse Strichartz inequality]\label{refined l2 lemma 1}
Let $(f_n)_n\subset H_{x,y}^1(\R^2\times\T)$. Suppose that
\begin{align}
\lim_{n\to\infty}\|f_n\|_{H_{x,y}^1}=A<\infty\quad\text{and}\quad\lim_{n\to\infty}\|e^{it\Delta_x} f_n\|_{L^4_{t,x,y}(\R)}=\vare>0.
\end{align}
Then up to a subsequence, there exist $\phi\in L_x^2H_y^1(\R^2\times\T)$ and $(t_n,x_n,\xi_n,\ld_n)_n\subset\R\times\R^2\times\R^2\times(0,\infty)$ such that $\limsup_{m\to\infty}|\xi_n|<\infty$ and $\lim_{n\to\infty}\ld_n=:\ld_\infty\in(0,\infty]$. Moreover,
\begin{align}\label{cnls l2 refined strichartz basic 5}
&\,\,\ld_n e^{-i\xi_n\cdot(\ld_n x+x_n)}(e^{it_n\Delta}f_n)(\ld_n x+x_n,y)\nonumber\\
\nonumber\\
\rightharpoonup &\,\,\phi(x,y)\text{ weakly in }
\left\{
             \begin{array}{ll}
             H_{x,y}^1(\R^2\times\T),&\text{if $\limsup_{n\to\infty}|\ld_n\xi_n|<\infty$},\\
             \\
             L_x^2H_y^1(\R^2\times\T),&\text{if $|\ld_n\xi_n|\to\infty$}.
             \end{array}
\right.
\end{align}
Additionally, if $\limsup_{n\to\infty}|\ld_n\xi_n|<\infty$, then $\xi_n\equiv 0$. Setting
\begin{align}
\phi_n:=
\left\{
             \begin{array}{ll}
             \ld_n^{-1}e^{-it_n\Delta_x}\bg[\phi(\frac{x-x_n}{\ld_n},y)\bg],&\text{if $\ld_\infty<\infty$},\\
             \\
             \ld_n^{-1}e^{-it_n\Delta_x}\bg[e^{i\xi_n\cdot x}
             (P_{\leq\ld_n^\theta}\phi)(\frac{x-x_n}{\ld_n},y)\bg],&\text{if $\ld_\infty=\infty$}
             \end{array}
\right.
\end{align}
for some fixed $\theta\in(0,1)$, we have
\begin{align}
\lim_{n\to\infty}&(\|f_n\|^2_{L_x^2 H_y^1}-\|f_n-\phi_n\|^2_{L_x^2 H_y^1})=\|\phi\|^2_{L_x^2 H_y^1}\gtrsim A^2\bg(\frac{\vare}{A}\bg)^{24},
\label{cnls l2 refined strichartz decomp 1}\\
\lim_{n\to\infty}&(\|f_n\|^2_{{H}_x^1L_y^2}-\|f_n-\phi_n\|^2_{{H}_x^1L_y^2}-\|\phi_n\|^2_{{H}_x^1L_y^2})=0,
\label{cnls l2 refined strichartz decomp 3}\\
\lim_{n\to\infty}&(\|f_n\|^2_{L_{x,y}^2}-\|f_n-\phi_n\|^2_{L_{x,y}^2}-\|\phi_n\|^2_{L_{x,y}^2})=0,\label{cnls l2 refined strichartz decomp 2}\\
\lim_{n\to\infty}&(\|f_n\|^2_{L_x^2 {H}_y^1}-\|f_n-\phi_n\|^2_{L_x^2{H}_y^1}-\|\phi_n\|^2_{L_x^2{H}_y^1})=0.
\label{cnls l2 refined strichartz decomp 4}
\end{align}
\end{lemma}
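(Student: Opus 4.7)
The argument follows the standard concentration--compactness scheme adapted to the anisotropic setting $\R^2\times\T$, combining the refined Strichartz estimate of Lemma \ref{cnls lem improved strichartz} with a two-parameter profile extraction.

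First, I would apply Lemma \ref{cnls lem improved strichartz} to $(f_n)$: since $\|f_n\|_{L^2_x H^1_y}\lesssim A$ and $\|e^{it\Delta_x}f_n\|_{\diag(\R)}\to\vare$, there exist $Q_n\in\mC$ with
\begin{align}
|Q_n|^{-3/22}\|e^{it\Delta_x}(f_n)_{Q_n}\|_{L^{11/2}_{t,x,y}(\R)}\gtrsim \vare\bg(\frac{\vare}{A}\bg)^3.
\end{align}
Denote by $\ld_n^{-1}$ the sidelength of $Q_n$ and by $\xi_n$ its center. A Bernstein estimate combined with the $H^1_x L^2_y$ bound on $f_n$ forces the contribution of cubes at distance $\gg 1$ from the origin to be negligible, hence $\limsup|\xi_n|<\infty$, and after extracting a subsequence $\ld_n\to\ld_\infty\in(0,\infty]$.

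Second, I would upgrade the $L^{11/2}$ concentration to a pointwise concentration. Interpolating the lower bound against an $L^\infty_{t,x}L^2_y$ upper bound (obtained from Strichartz plus Bernstein on the cube $Q_n$) yields a spacetime point $(t_n,x_n)\in\R\times\R^2$ such that the renormalized sequence
\begin{align}
g_n(x,y):=\ld_n e^{-i\xi_n\cdot(\ld_n x+x_n)}(e^{it_n\Delta_x}f_n)(\ld_n x+x_n,y)
\end{align}
satisfies $|\langle g_n,\rho\rangle_{L^2_{x,y}}|\gtrsim A(\vare/A)^{12}$ for a fixed bump $\rho\in C^\infty_c(\R^2\times\T)$. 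By construction $\|g_n\|_{L^2_x H^1_y}\le A$, and if additionally $\limsup|\ld_n\xi_n|<\infty$ then $\|\nabla_x g_n\|_{L^2_{x,y}}\lesssim \ld_n\|\nabla_x f_n\|_{L^2}+|\ld_n\xi_n|\|f_n\|_{L^2}$ is bounded, so $g_n$ is bounded in $H^1_{x,y}$; in that case a Galilean shift allows me to assume $\xi_n\equiv 0$. Passing to a further subsequence extracts the weak limit $\phi$ in the appropriate space, and the pairing with $\rho$ yields the quantitative lower bound $\|\phi\|^2_{L^2_x H^1_y}\gtrsim A^2(\vare/A)^{24}$.

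Third, the four decoupling identities \eqref{cnls l2 refined strichartz decomp 1}--\eqref{cnls l2 refined strichartz decomp 4} follow from the Hilbert-space identity $\|u\|^2=\|u-\phi\|^2+\|\phi\|^2+2\mathrm{Re}\langle u-\phi,\phi\rangle$ applied to $u=g_n$ and its $x$- and $y$-derivatives, together with $g_n\rightharpoonup\phi$ and the fact that the rescaling, Galilean, and propagation operators implementing $f_n\mapsto\phi_n$ are $L^2_x$-unitary. The main obstacle I expect is the case $\ld_\infty=\infty$: here $\phi$ only lies in $L^2_x H^1_y$, and the reconstruction $\phi_n$ requires the frequency cutoff $P_{\le\ld_n^\theta}$ so that $\phi_n\in H^1_{x,y}$. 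One must verify (i) that this truncation preserves the concentration lower bound asymptotically, which uses $\theta\in(0,1)$ and boundedness of $\xi_n$, and (ii) that the $H^1_x L^2_y$ decoupling survives --- the key point being that the $\dot{H}^1_x$ norm of $(I-P_{\le\ld_n^\theta})\phi_n$ in the original coordinates carries a factor $\ld_n^{-1}\cdot\ld_n^\theta\to 0$. The $L^2_x H^1_y$ decoupling, since $\partial_y$ commutes with all rescalings and Galilean shifts in $x$, is then routine.
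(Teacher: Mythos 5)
Your proposal follows the same overall architecture as the paper's proof: improved Strichartz to extract cubes $Q_n$, upgrade $L^{11/2}$ concentration to pointwise concentration, extract a weak limit, and close the four decoupling identities via the Hilbert-space identity. The genuine difference is how boundedness of the frequency centers $\xi_n$ (and hence $\ld_n\gtrsim 1$) is obtained. The paper first truncates to $f_n^R$ (Fourier support in $B_R(0)$), applies Lemma~\ref{cnls lem improved strichartz} to the truncated sequence, uses the fact that by construction the cubes then lie in a fixed ball, extracts an intermediate weak limit $\phi^R$, and finally sends $R\to\infty$. You skip this device and argue directly that Bernstein combined with the $H^1_x L^2_y$ bound rules out high-frequency cubes. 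That heuristic is correct (if $|\xi_n|\to\infty$ with $\ld_n\gtrsim 1$, then $\|(f_n)_{Q_n}\|_{L^2_x}\lesssim |\xi_n|^{-1}A$ and the $L^{11/2}$ lower bound collapses), but you only assert it; the paper's $f^R$ truncation is precisely the rigorous substitute, so this step needs to be spelled out before the argument is complete. A second, smaller point: you write $\|\nabla_x g_n\|_{L^2_{x,y}}\lesssim \ld_n\|\nabla_x f_n\|_{L^2}+|\ld_n\xi_n|\|f_n\|_{L^2}$ and conclude boundedness in $H^1_{x,y}$ from $\limsup|\ld_n\xi_n|<\infty$ alone, but the first term is controlled only when $\ld_n$ itself is bounded, i.e.\ in the $\ld_\infty<\infty$ regime (which is how the lemma is actually used downstream); this mirrors the same phrasing already in the paper, so it is not a new flaw but is worth being aware of. The treatment of the $\ld_\infty=\infty$ case — inserting $P_{\leq\ld_n^\theta}$ and using the gain $\ld_n^{-(1-\theta)}\to 0$ in the $\dot H^1_x L^2_y$ decoupling — correctly captures the paper's more explicit $I_1,I_2,I_3$ decomposition.
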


\begin{proof}
For $R>0$, denote by $f^R$ the function such that $\mathcal{F}_x(f^R)=\chi_R\mathcal{F}_x f$, where $\chi_R$ is the characteristic function of the ball $B_R(0)\subset \R^2$. First we obtain that
\begin{align}\label{spare0}
\sup_{n\in\N}\|f_n-f_n^R\|_{L_x^2 H_y^1}^2
&=\sup_{n\in\N}\sum_k\la k\ra^2\int_{|\xi|\geq R}|\mathcal{F}_{x,y}f_n(\xi,k)|^2\,d\xi\nonumber\\
&\leq R^{-2}\sup_{n\in\N}\sum_k\la k\ra^2\int|\xi|^2|\mathcal{F}_{x,y}f_n(\xi,k)|^2\,d\xi\nonumber\\
&=R^{-2}\sup_{n\in\N}\|f_n\|^2_{H_{x,y}^1}\lesssim R^{-2}A^2\to 0
\end{align}
as $R\to\infty$. Combining with Strichartz and the embedding $H_y^1\hookrightarrow L_y^4$, we infer that there exists some $K_1>0$ such that for all $R\geq K_1$ one has
\begin{align*}
\sup_{n\in\N}\|f^R_n\|_{L_x^2 H_y^1}\lesssim A\quad\text{and}\quad\sup_{n\in\N}\|e^{it\Delta_x} f^R_n\|_{L_{t,x,y}^4(\R)}\gtrsim \vare.
\end{align*}
Applying Lemma \ref{cnls lem improved strichartz} to $(f^R_n)_n$, we know that there exists $(Q_n)_n\subset\mC$ such that
\begin{align}\label{l2 refined strichartz basic 1}
\vare^{4}A^{-3}\lesssim \inf_{n\in\N}|Q_n|^{-\frac{3}{22}}\|e^{it\Delta_x}(f^R_n)_{Q_n}\|_{L^{\frac{11}{2}}_{t,x,y}(\R)}.
\end{align}
Let $\ld_n^{-1}$ be the side-length of $Q_n$. Denote also by $\xi_n$ the center of $Q_n$. Using Strichartz we obtain that
\begin{align*}
\sup_{n\in\N}\|e^{it\Delta_x}(f^R_n)_{Q_n}\|_{L^{\frac{11}{2}}_{t,x,y}(\R)}
&\lesssim\sup_{n\in\N}\|e^{it\Delta_{x,y}}(e^{-it\Delta_y}(f^R_n)_{Q_n})\|_{L^{\frac{11}{2}}_{t,x} H_y^{\frac{8}{11}}(\R)}\nonumber\\
&\lesssim \sup_{n\in\N}\|f_n\|_{{H}_x^{\frac{3}{11}}H_y^{\frac{8}{11}}}\lesssim
\sup_{n\in\N}(\|f_n\|^{\frac{3}{11}}_{H_x^{1}L_y^2}\|f_n\|^{\frac{8}{11}}_{L_x^2 H_y^1})
\lesssim \sup_{n\in\N}\|f_n\|_{H_{x,y}^1}\lesssim A,
\end{align*}
which in turn implies $\sup_{n\in\N}|Q_n|\lesssim 1$. Since $(\mathcal{F}_x(f_n^R))_n$ are supported in $B_R(0)$, we may assume that $(Q_n)_n\subset B_{R'}(0)$ for some sufficiently large $R'>0$. Therefore $(\ld_n)_n$ is bounded below and $(\xi_n)_n$ is bounded in $\R^2$. H\"older, Strichartz and the embedding $H_y^1\hookrightarrow L_y^4 $ yield
\begin{align*}
|Q_n|^{-\frac{3}{22}}\|e^{it\Delta_x}(f^R_n)_{Q_n}\|_{L^{\frac{11}{2}}_{t,x,y}(\R)}
&\lesssim\ld_n^{\frac{3}{11}}\|e^{it\Delta_x}(f^R_n)_{Q_n}\|^{\frac{8}{11}}_{L_{t,x,y}^4(\R)}
\|e^{it\Delta_x}(f^R_n)_{Q_n}\|^{\frac{3}{11}}_{L_{t,x,y}^\infty(\R)}
\nonumber\\
&\lesssim \ld_n^{\frac{3}{11}}\vare^{\frac{8}{11}}\|e^{it\Delta_x}(f^R_n)_{Q_n}\|^{\frac{3}{11}}_{L_{t,x,y}^\infty(\R)}.
\end{align*}
Combining with \eqref{l2 refined strichartz basic 1} we infer that there exist $(t_n,x_n,y_n)_n\subset\R\times\R^d\times \T$ such that
\begin{align}\label{l2 refined strichartz basic 2}
\liminf_{n\to\infty}\ld_n|[e^{it_n\Delta_x}(f^R_n)_{Q_n}](x_n,y_n)|\gtrsim \vare^{12}A^{-11}.
\end{align}
Since $\T$ is compact, we simply assume that $y_n\equiv 0$. Define
\begin{align*}
h_n(x,y)&:=\ld_ne^{-i\xi_n(\ld_n x+x_n)}(e^{it_n\Delta}f_n)(\ld_n x+x_n,y),\\
h^R_n(x,y)&:=\ld_ne^{-i\xi_n(\ld_n x+x_n)}(e^{it_n\Delta}f^R_n)(\ld_n x+x_n,y).
\end{align*}
It is easy to verify that $\|h_n\|_{L_x^2 H_y^1}=\|f_n\|_{L_x^2 H_y^1}$. By the $L_x^2 H_y^1$-boundedness of $(f_n)_n$ we know that there exists some $\phi\in L_x^2 H_y^1$ such that $h_n\rightharpoonup \phi$ weakly in $L_x^2 H_y^1$. Arguing similarly, we infer that $(h^R_n)_n$ converges weakly to some $\phi^R\in L_x^2 H_y^1$. By definition of $\phi$ and $\phi^R$ we see that
\begin{align*}
\|\phi-\phi^R\|^2_{L_x^2 H_y^1}=\lim_{n\to\infty}\la h_n-h_n^R,\phi-\phi^R\ra_{L_x^2 H_y^1}\leq (\limsup_{n\to\infty}\|h_n-h_n^R\|_{L_x^2 H_y^1})\|\phi-\phi^R\|_{L_x^2 H_y^1}.
\end{align*}
Using \eqref{spare0} we then obtain that
\begin{align}\label{spare2}
\phi^R\to \phi\quad\text{in $L_x^2 H_y^1$ as $R\to\infty$}.
\end{align}
Now define the function $\chi_1$ such that $\mathcal{F}_x\chi_1$ is the characteristic function of the cube $[-\frac{1}{2},\frac{1}{2})^2\subset\R^2$. Also let $\chi_2=\delta_0\in H^{-1}_y$, where $\delta_0$ is the Dirac function at zero. Since $H_y^1\hookrightarrow L_y^\infty$, we infer that $\|\chi_2\|_{H_y^{-1}}\lesssim 1$. From \eqref{l2 refined strichartz basic 2}, the weak convergence of $h^R_n$ to $\phi^R$ in $L_x^2 H_y^1$ and change of variables it follows
\begin{align}\label{l2 refined strichartz basic 3}
\la\phi^R,\chi_1\chi_2\ra_{L_{x,y}^2}=\lim_{n\to\infty}\ld_n^{\frac{d}{2}}|[e^{it_n\Delta}(f^R_n)_{Q_n}](x_n,0)|
\gtrsim \vare^{12}A^{-11}.
\end{align}
On the other hand, using duality we infer that
\begin{align*}
|\la\phi^R,\chi_1\chi_2\ra_{L_{x,y}^2}|\leq\|\phi^R\|_{L_x^2 H_y^1}\|\chi_1\|_{L_x^2}\|\chi_2\|_{H_y^{-1}}\lesssim \|\phi^R\|_{L_x^2 H_y^1}.
\end{align*}
Thus
\begin{align}\label{spare3}
\|\phi^R\|_{L_x^2 H_y^1}^2\geq C\vare^{12}A^{-11}
\end{align}
for some $C=C(d)>0$ which is uniform for all $R\geq K_1$. Now using \eqref{spare2} and \eqref{spare3} we finally deduce that
\begin{align}
\|\phi\|_{L_x^2 H_y^1}^2 \geq\|\phi^R\|_{L_x^2 H_y^1}^2-\frac{C}{2}\vare^{12}A^{-11}
\geq \frac{C}{2}\vare^{12}A^{-11}
\end{align}
for sufficiently large $R$, which gives the lower bound of \eqref{cnls l2 refined strichartz decomp 1}. From now on we fix $R$ such that the lower bound of \eqref{cnls l2 refined strichartz decomp 1} is valid for this chosen $R$ and let $(t_n,x_n,\xi_n,\ld_n)_n$ be the corresponding symmetry parameters. Since $L_x^2 H_y^1$ is a Hilbert space, from the weak convergence of $h_n$ to $\phi$ in $L_x^2 H_y^1$ we obtain that
\begin{align*}
\lim_{n\to\infty}(\|h_n\|^2_{L_x^2 H_y^1}-\|\phi\|^2_{L_x^2 H_y^1}-\|h_n-\phi\|^2_{L_x^2 H_y^1})
=2\lim_{n\to\infty}\mathrm{Re}\,\la \phi,h_n-\phi\ra_{L_x^2 H_y^1}=0.
\end{align*}
Combining with the fact that
\begin{align*}
\|P_{\leq\ld_n^\theta}\phi-\phi\|_{L_x^2 H_y^1}\to 0\quad\text{as $n\to\infty$}
\end{align*}
for $\ld_n\to\infty$ we deduce the equalities in \eqref{cnls l2 refined strichartz decomp 1} and \eqref{cnls l2 refined strichartz decomp 4}. Since $L_{x,y}^2\subset L_x^2 H_y^1$ is also a Hilbert space, \eqref{cnls l2 refined strichartz decomp 2} follows verbatim. In the case $\limsup_{n\to\infty}|\ld_n\xi_n|<\infty$, using the boundedness of $(\ld_n\xi_n)_n$ and chain rule, we also infer that $\|h_n\|_{H_{x,y}^1}\lesssim \|f_n\|_{H_{x,y}^1}$. By the $H_{x,y}^1$-boundedness of $(f_n)_n$ and uniqueness of weak limit we deduce additionally that $\phi\in H_{x,y}^1$ and \eqref{cnls l2 refined strichartz basic 5} follows.

Next we show that we may assume $\xi_n\equiv 0$ under the additional condition $\limsup_{n\to\infty}|\ld_n\xi_n|<\infty$. Define
\begin{align*}
\mathcal{T}_{a,b}u(x):=be^{ia\cdot x}u(x)
\end{align*}
for $a\in\R^d$ and $b\in\C$ with $|b|=1$. Let also
\begin{align*}
(\ld\xi)_\infty&:=\lim_{n\to\infty}\ld_n\xi_n,\\
e^{i(\xi\cdot x)_\infty}&:=\lim_{n\to\infty}e^{i\xi_n\cdot x_n}.
\end{align*}
By the boundedness of $(\ld_n\xi_n)_n$ we infer that $\mathcal{T}_{\ld_n\xi_n,e^{i\xi_n\cdot x_n}}$ is an isometry on $L_{x,y}^2$ and converges strongly to $\mathcal{T}_{(\ld\xi)_\infty,e^{i(\xi\cdot x)_\infty}}$ as operators on $H_{x,y}^1$. We may replace $h_n$ by $\ld_n(e^{it_n\Delta}f_n)(\ld_n x+x_n,y)$ and $\phi$ by $\mathcal{T}_{(\ld\xi)_\infty,e^{i(\xi\cdot x)_\infty}}\phi$ and \eqref{cnls l2 refined strichartz basic 5}, \eqref{cnls l2 refined strichartz decomp 1} and \eqref{cnls l2 refined strichartz decomp 3} carry over.

Finally, we prove \eqref{cnls l2 refined strichartz decomp 3}. In the case $\ld_\infty<\infty$ we additionally know that $\phi\in H_{x,y}^1$ and $\xi_n\equiv 0$. Using the fact that $H_x^1 L_y^2 $ is a Hilbert space, \eqref{cnls l2 refined strichartz decomp 2} and change of variables we obtain
\begin{align*}
o_n(1)=\|h_n\|_{\dot{H}_x^1 L_y^2}-\|h_n-\phi\|_{\dot{H}_x^1 L_y^2}-\|\phi\|_{\dot{H}_x^1 L_y^2}=
\ld_n^2(\|f_n\|_{\dot{H}_x^1 L_y^2}-\|f_n-\phi_n\|_{\dot{H}_x^1 L_y^2}-\|\phi_n\|_{\dot{H}_x^1 L_y^2}).
\end{align*}
Combining with the lower boundedness of $(\ld_n)_n$, this implies that
\begin{align*}
\|f_n\|_{\dot{H}_x^1 L_y^2}-\|f_n-\phi_n\|_{\dot{H}_x^1 L_y^2}-\|\phi_n\|_{\dot{H}_x^1 L_y^2}=\ld_n^{-2}o_n(1)=o_n(1),
\end{align*}
which gives \eqref{cnls l2 refined strichartz decomp 3} in the case $\ld_\infty<\infty$. Assume now $\ld_\infty=\infty$. Using change of variables and chain rule we obtain that
\begin{align}
&\,\|f_n\|^2_{\dot{H}_x^1L_y^2}-\|f_n-\phi_n\|^2_{\dot{H}_x^1L_y^2}-\|\phi_n\|^2_{\dot{H}_x^1L_y^2}\nonumber\\
=&\,|\xi_n|^2\bg(\|h_n\|_{L^2_{x,y}}^2-\|h_n-P_{\leq \ld_n^\theta}\phi\|^2_{L^2_{x,y}}-\|P_{\leq \ld_n^\theta}\phi\|^2_{L^2_{x,y}}\bg)\nonumber\\
&+2\ld_n^{-1}\mathrm{Re}\bg(\la i\xi_n (h_n-P_{\leq \ld_n^\theta}\phi),\nabla_x P_{\leq \ld_n^\theta}\phi\ra_{L^2_{x,y}}
+\la i\xi_n P_{\leq \ld_n^\theta}\phi,\nabla_x (h_n-P_{\leq \ld_n^\theta}\phi)\ra_{L^2_{x,y}}\bg)\nonumber\\
&+\ld_n^{-2}\bg(\|h_n\|^2_{\dot{H}_x^1L_y^2}-\|h_n-P_{\leq \ld_n^\theta}\phi\|^2_{\dot{H}_x^1L_y^2}-\|P_{\leq \ld_n^\theta}\phi\|^2_{\dot{H}_x^1L_y^2}\bg)\nonumber\\
=&: I_1+I_2+I_3.
\end{align}
Using the boundedness of $(\xi_n)_n$ and \eqref{cnls l2 refined strichartz decomp 2} we infer that $I_1\to 0$. For $I_2$, using Bernstein and the boundedness of $(\xi_n)_n$ in $\R^d$ and of $(h_n-P_{\leq \ld_n^\theta}\phi)_n$ in $L_{x,y}^2$ we see that
\begin{align*}
|I_2|\lesssim \ld_n^{-1}\|h_n-P_{\leq \ld_n^\theta}\phi\|_{L_{x,y}^2}
\|\nabla_x P_{\leq \ld_n^\theta}\phi\|_{L_{x,y}^2}\lesssim \ld_n^{-(1-\theta)}\to 0.
\end{align*}
Finally, $I_3$ can be similarly estimated using Bernstein inequality, we omit the details here. Summing up we conclude \eqref{cnls l2 refined strichartz decomp 2}.
\end{proof}

\begin{remark}
By redefining the symmetry parameters we may w.l.o.g. assume that
\begin{align}
\text{(i) } &\ld_n\equiv 1\quad\text{or}\quad \ld_n\to \infty,\\
\text{(ii) } &t_n\equiv 0\quad\text{or}\quad \frac{t_n}{\ld_n^2}\to \pm\infty
\end{align}
and the linear profiles $\phi_n$ take the form
\begin{align*}
\phi_n=
\left\{
             \begin{array}{ll}
             e^{it_n\Delta_x}\phi(x-x_n,y),&\text{if $\ld_\infty=1$},\\
             \\
             \ld_n^{-1}e^{i x\cdot\xi_n}[e^{it_n\Delta_x}P_{\leq\ld_n^\theta}\phi](\ld_n^{-1}(x-x_n),y),&\text{if $\ld_\infty=\infty$}.
             \end{array}
\right.
\end{align*}
\end{remark}

\begin{lemma}
We have
\begin{align}
&\|f_n\|_{L_{x,y}^4}^4=\|\phi_n\|_{L_{x,y}^4}^4+\|f_n-\phi_n\|_{L_{x,y}^4}^4+o_n(1).\label{decomp tas}
\end{align}
\end{lemma}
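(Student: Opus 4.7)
The plan is to split the argument into two regimes according to the structure of $\phi_n$ recorded in the preceding remark: the compact-scale regime ($\ld_\infty=1$ and $t_n\equiv 0$), and the complementary regime (either $\ld_\infty=\infty$, or $\ld_\infty=1$ with $t_n/\ld_n^2\to\pm\infty$). The compact-scale case is the only one in which $\phi_n$ does not vanish in $L^4_{x,y}$; in the complementary regime the profile itself decays in $L^4_{x,y}$, so the decomposition is essentially trivial.

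In the compact-scale regime $\phi_n(x,y)=\phi(x-x_n,y)$. I would set $g_n(x,y):=f_n(x+x_n,y)$, so that the construction in the proof of Lemma \ref{refined l2 lemma 1} yields $g_n\rightharpoonup\phi$ weakly in $H^1_{x,y}$. By Rellich--Kondrachov on increasing bounded subsets together with a diagonal extraction, I may pass to a subsequence for which $g_n\to\phi$ almost everywhere on $\R^2\times\T$. Since $(g_n)_n$ is bounded in $L^4_{x,y}$ via the Sobolev embedding $H^1_{x,y}\hookrightarrow L^4_{x,y}$, the Brezis--Lieb lemma applied to $|g_n|^4$ gives
\begin{align}
\|g_n\|^4_{L^4_{x,y}}-\|g_n-\phi\|^4_{L^4_{x,y}}=\|\phi\|^4_{L^4_{x,y}}+o_n(1),
\end{align}
and translation invariance of the $L^4_{x,y}$-norm transports this identity back to $f_n$ and $\phi_n$, producing \eqref{decomp tas}.

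In the complementary regime I claim $\|\phi_n\|_{L^4_{x,y}}\to 0$; granting this, the elementary pointwise bound $\bigl||a|^4-|a-b|^4\bigr|\lesssim |b|(|a|^3+|a-b|^3)$ together with H\"older's inequality and the uniform $L^4_{x,y}$-boundedness of both $(f_n)_n$ (by Sobolev) and $(\phi_n)_n$ yields \eqref{decomp tas} at once. For $\ld_\infty=\infty$, a change of variables gives $\|\phi_n\|^4_{L^4_{x,y}}=\ld_n^{-2}\|e^{it_n\Delta_x}P_{\leq\ld_n^\theta}\phi\|^4_{L^4_{x,y}}$; the 2D Gagliardo--Nirenberg inequality slice-by-slice in $y$, combined with the Bernstein bound $\|\nabla_x P_{\leq\ld_n^\theta}\phi(\cdot,y)\|_{L^2_x}\lesssim\ld_n^\theta\|\phi(\cdot,y)\|_{L^2_x}$ and the embedding $H^1_y\hookrightarrow L^\infty_y$ for the $y$-integration, yields $\|\phi_n\|^4_{L^4_{x,y}}\lesssim\ld_n^{2(\theta-1)}\|\phi\|^4_{L^2_x H^1_y}\to 0$ since $\theta<1$. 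For the remaining sub-case ($\ld_\infty=1$, $t_n\to\pm\infty$), translation invariance reduces the claim to $\|e^{it_n\Delta_x}\phi\|_{L^4_{x,y}}\to 0$; approximating $\phi$ in $H^1_{x,y}$ by Schwartz functions and exploiting the unitarity of $e^{it\Delta_x}$ on $H^1_{x,y}$ together with $H^1_{x,y}\hookrightarrow L^4_{x,y}$ reduces matters to Schwartz test functions, for which the 2D fixed-time dispersive bound $\|e^{it\Delta_x}\phi(\cdot,y)\|_{L^4_x}\lesssim|t|^{-1/2}\|\phi(\cdot,y)\|_{L^{4/3}_x}$, integrated in $y$ against Schwartz decay, supplies the required decay.

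The main technical point is the density-dispersion trade-off in the $t_n\to\pm\infty$ sub-case, where one balances the $H^1_{x,y}$-approximation error against the fixed-time $|t_n|^{-1/2}$-decay of the Schwartz approximant; every other step is a direct application of the profile structure from Lemma \ref{refined l2 lemma 1} together with Sobolev embedding, Brezis--Lieb, Bernstein and Gagliardo--Nirenberg.
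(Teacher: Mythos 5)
Your proposal is correct and follows essentially the same route as the paper: the same three-way case split (Brezis--Lieb when $t_n\equiv 0$ and $\ld_\infty=1$; density plus the 2D dispersive decay when $t_n\to\pm\infty$; Bernstein together with the 2D Gagliardo--Nirenberg inequality sliced in $y$ when $\ld_\infty=\infty$), with the same triangle/H\"older-type reduction once $\|\phi_n\|_{L^4_{x,y}}\to 0$ is established. The only cosmetic difference is that you route the $y$-integration through $H^1_y\hookrightarrow L^\infty_y$ while the paper uses $H^1_y\hookrightarrow L^4_y$ with Minkowski, and you spell out the a.e.\ convergence (via Rellich--Kondrachov and a diagonal argument) that the paper leaves implicit before citing Brezis--Lieb.
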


\begin{proof}
Assume first that $\ld_\infty=1$ and $t_n\to\pm\infty$. In this case, we have $\phi\in H_{x,y}^1$. For $\beta>0$ let $\psi\in C_c^\infty (\R^2)\otimes C_{\mathrm{per}}^\infty(\T)$ such that $\|\phi-\psi\|_{H_{x,y}^1}\leq\beta$. Define also $\psi_n:=e^{it_n\Delta_x}\psi(x-x_n,y)$. Then by dispersive estimate we deduce that
\begin{align*}
\|\psi_n\|_{L_{x,y}^4}\lesssim |t_n|^{-\frac12}\|\psi\|_{L_{y}^4 L_x^{\frac43}}\to 0.
\end{align*}
Now let $\zeta\in C^\infty(\R^3;[0,1])$ be a cut-off function such that $\mathrm{supp}\,\zeta\subset \R^2\times [-2\pi,2\pi]$ and $\zeta\equiv 1$ on $\R^2\times [-\pi,\pi]$. Then by Gagliardo-Nirenberg inequality, product rule and periodicity along the $y$-direction we have
\begin{align}
\|\psi_n-\phi_n\|_{L_{x,y}^4(\R^2\times\T)}
\leq\|\zeta(\psi_n-\phi_n)\|_{L_{x,y}^4(\R^3)}
\lesssim \|\zeta(\psi_n-\phi_n)\|_{H_{x,y}^1(\R^3)}\lesssim \|\psi_n-\phi_n\|_{H_{x,y}^1(\R^2\times\T)}\leq\beta,\label{argument gn}
\end{align}
which in turn implies $\|\phi_n\|_{L_{x,y}^4}=o_n(1)$. Therefore by triangular inequality
\begin{align*}
|\|f_n\|_{L_{x,y}^4}-\|f_n-\phi_n\|_{L_{x,y}^4}|\leq \|\phi_n\|_{L_{x,y}^4}=o_n(1)
\end{align*}
and \eqref{decomp tas} follows. Now we assume $\ld_\infty=1$ and $t_n\equiv 0$. Then we use the Brezis-Lieb lemma to deduce
\begin{align*}
\|h_n\|_{L_{x,y}^4}^4 =\|\phi\|_{L_{x,y}^4}^4+\|h_n-\phi\|_{L_{x,y}^4}^4+o_n(1).
\end{align*}
\eqref{decomp tas} follows then by undoing the transformation. Finally, we take the case $\ld_\infty=\infty$. Using Gagliardo-Nirenberg, chain rule, Bernstein, Minkowski and the embedding $H_y^1\hookrightarrow L_y^4$
\begin{align}
\|\phi_n\|_{L_{x,y}^4}&\leq \|\|\phi_n\|^{\frac12}_{L_x^2}(\ld_n^{-\frac12})\|\nabla_x(P_{\leq \ld_n^\theta}\phi)\|^{\frac12}_{L_x^2}\|_{L_y^4}\nonumber\\
&\lesssim \ld_n^{-\frac{1-\theta}{2}}\|\phi_n\|_{L_y^4L_x^2}\lesssim  \ld_n^{-\frac{1-\theta}{2}}\|\phi_n\|_{L_x^2 H_y^1}\to\0
\end{align}
as $n\to\infty$. The desired claim then follows again by triangular inequality.
\end{proof}

\begin{lemma}[Linear profile decomposition for bounded $H_{x,y}^1$-sequence]\label{linear profile}
Let $(\psi_n)_n$ be a bounded sequence in $H_{x,y}^1$. Then up to a subsequence, there exist nonzero linear profiles
$(\tdu^j)_j\subset L_x^2 H_y^1$, remainders $(w_n^k)_{k,n}\subset L_x^2 H_y^1$, parameters $(t^j_n,x^j_n,\xi^j_n,\ld^j_n)_{j,n}\subset\R\times\R^2\times\R^2\times(0,\infty)$ and $K^*\in\N\cup\{\infty\}$, such that
\begin{itemize}
\item[(i)] For any finite $1\leq j\leq K^*$ the parameters satisfy
\begin{align}
1&\gtrsim_j\lim_{n\to\infty}|\xi_n^j|,\nonumber\\
\lim_{n\to\infty}t^j_n&=:t_\infty^j\in\{0,\pm\infty\},\nonumber\\
\lim_{n\to\infty}\ld^j_n&=:\ld_\infty^j\in\{1,\infty\},\nonumber\\
t_n^j&\equiv 0\quad\text{if $t_\infty^j=0$},\nonumber\\
\ld_n^j&\equiv 1\quad\text{if $\ld_\infty^j=1$},\nonumber\\
\xi_n^j&\equiv 0\quad\text{if $\ld_\infty^j=1$}.
\end{align}

\item[(ii)]For any finite $1\leq k\leq K^*$ we have the decomposition
\begin{align}\label{cnls decomp}
\psi_n=\sum_{j=1}^k T^j_nP_n^j \tdu^j+w_n^k.
\end{align}
Here, the operators $T_n^j$ and $P_n^j$ are defined by
\begin{align}
T^j_n u(x):=
\left\{
             \begin{array}{ll}
             [e^{it^j_n\Delta_x}u](x-x^j_n,y),&\text{if $\ld^j_\infty=1$},\\
             \\
             g_{\xi^j_n,x^j_n,\ld^j_n}[e^{it^j_n\Delta_x}u](x,y),&\text{if $\ld^j_\infty=\infty$}
             \end{array}
\right.
\end{align}
and
\begin{align}
P^j_n u:=
\left\{
             \begin{array}{ll}
             u,&\text{if $\ld^j_\infty=1$},\\
             \\
             P_{\leq(\ld_n^j)^\theta}u,&\text{if $\ld^j_\infty=\infty$}
             \end{array}
\right.
\end{align}
for some $\theta\in(0,1)$. Moreover,
\begin{align}
\tdu^j\in
\left\{
             \begin{array}{ll}
             H_{x,y}^1,&\text{if $\ld^j_\infty=1$},\\
             \\
             L_x^2 H_y^1,&\text{if $\ld^j_\infty=\infty$}.
             \end{array}
\right.
\end{align}

\item[(iii)] The remainders $(w_n^k)_{k,n}$ satisfy
\begin{align}\label{cnls to zero wnk}
\lim_{k\to K^*}\lim_{n\to\infty}\|e^{it\Delta_x}w_n^k\|_{L_{t,x,y}^4(\R)}=0.
\end{align}

\item[(iv)] The parameters are orthogonal in the sense that
\begin{align}\label{cnls orthog of pairs}
 \frac{\ld_n^k}{\ld_n^j}+ \frac{\ld_n^j}{\ld_n^k}+\ld_n^k|\xi_n^j-\xi_n^k|+\bg|t_k\bg(\frac{\ld_n^k}{\ld_n^j}\bg)^2-t_n^j\bg|
+\bg|\frac{x_n^j-x_n^k-2t_n^k(\ld_n^k)^2(\xi_n^j-\xi_n^k)}{\ld_n^k}\bg|\to\infty
\end{align}
for any $j\neq k$.

\item[(v)] For any finite $1\leq k\leq K^*$ we have the energy decompositions
\begin{align}
\|\psi_n\|_{L_{x,y}^2}^2&=\sum_{j=1}^k\|T_n^jP_n^j\tdu^j\|_{L_{x,y}^2}^2+\| w_n^k\|_{L_{x,y}^2}^2+o_n(1),\label{orthog L2}\\
\|\nabla_x\psi_n\|_{L_{x,y}^2}^2&=\sum_{j=1}^k\|\nabla_xT_n^jP_n^j\tdu^j\|_{L_{x,y}^2}^2+\|\nabla_xw_n^k\|_{L_{x,y}^2}^2+o_n(1),\label{orthog gradx}\\
\|\nabla_y\psi_n\|_{L_{x,y}^2}^2&=\sum_{j=1}^k\|\nabla_yT_n^jP_n^j\tdu^j\|_{L_{x,y}^2}^2+\|\nabla_yw_n^k\|_{L_{x,y}^2}^2+o_n(1),\label{orthog grady}\\
\|\psi_n\|_{L_{x,y}^4}^4&=\sum_{j=1}^k\|T_n^jP_n^j\tdu^j\|_{L_{x,y}^4}^4+\|w_n^k\|_{L_{x,y}^4}^4+o_n(1)\label{cnls conv of h}.
\end{align}
\end{itemize}
\end{lemma}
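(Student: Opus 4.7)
The proof proceeds by the now-standard iterative extraction scheme of Keraani/Bahouri--Gérard, using Lemma \ref{refined l2 lemma 1} as the atomic extraction step. Set $w_n^0 := \psi_n$. If $\limsup_n \|e^{it\Delta_x}w_n^0\|_{L^4_{t,x,y}(\R)} = 0$, stop with $K^*=0$. Otherwise, apply Lemma \ref{refined l2 lemma 1} to $(w_n^0)$ to extract a profile $\tdu^1$, parameters $(t_n^1,x_n^1,\xi_n^1,\ld_n^1)$, and remainder $w_n^1 := w_n^0 - T_n^1 P_n^1 \tdu^1$. By passing to a subsequence and time-translating, we may ensure $t_n^1/(\ld_n^1)^2 \to t_\infty^1 \in \{0,\pm\infty\}$, set $t_n^1\equiv 0$ in the first case; likewise we may ensure $\ld_n^1 \to 1$ or $\infty$, and force $\xi_n^1\equiv 0$ when $\ld_\infty^1=1$. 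This produces the normalizations claimed in (i). Iterate the procedure on $(w_n^1)$ to obtain $\tdu^2,\,w_n^2$, and so on. The key quantitative gain from \eqref{cnls l2 refined strichartz decomp 1} is that at each step $j$ we extract at least $c\, A_j^2(\vare_j/A_j)^{24}$ of $L_x^2 H_y^1$-mass, where $A_j:=\lim_n\|w_n^{j-1}\|_{H_{x,y}^1}$ and $\vare_j:=\lim_n\|e^{it\Delta_x}w_n^{j-1}\|_{L^4_{t,x,y}}$.

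The termination/convergence step is then forced by the Pythagorean decomposition \eqref{cnls l2 refined strichartz decomp 4}, which iterated gives
\begin{align}
\sum_{j=1}^k \lim_n \|T_n^j P_n^j \tdu^j\|_{L_x^2 H_y^1}^2 + \lim_n \|w_n^k\|_{L_x^2 H_y^1}^2 = \lim_n \|\psi_n\|_{L_x^2 H_y^1}^2 \lesssim \sup_n \|\psi_n\|_{H_{x,y}^1}^2.
\end{align}
Hence $\lim_n \|T_n^j P_n^j \tdu^j\|_{L_x^2 H_y^1}\to 0$, so by the quantitative extraction above $\vare_k\to 0$ as $k\to K^*$, which is exactly \eqref{cnls to zero wnk}. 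The $\dot H^1_x L^2_y$- and $L^2_{x,y}$-decompositions \eqref{orthog gradx}, \eqref{orthog L2} are obtained by iterating \eqref{cnls l2 refined strichartz decomp 3} and \eqref{cnls l2 refined strichartz decomp 2}, and \eqref{orthog grady} follows from \eqref{orthog L2} applied componentwise in Fourier $y$-modes weighted by $\la k\ra^2$, which is exactly the content of \eqref{cnls l2 refined strichartz decomp 4} minus \eqref{cnls l2 refined strichartz decomp 2}.

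Establishing the parameter orthogonality \eqref{cnls orthog of pairs} is the main obstacle and is proved by contradiction in the usual way. Suppose for some $j<k$ the left-hand side of \eqref{cnls orthog of pairs} remains bounded along a subsequence. Then by passing to limits one can realize $(T_n^k)^{-1} T_n^j$ as a bounded, non-escaping sequence of $L^2_x$-symmetries composed with a Schr\"odinger flow in $t$. Testing the identity
\begin{align}
(T_n^k)^{-1} w_n^{j-1} = (T_n^k)^{-1} T_n^j P_n^j \tdu^j + \sum_{j<\ell<k} (T_n^k)^{-1} T_n^\ell P_n^\ell \tdu^\ell + (T_n^k)^{-1} w_n^{k-1}
\end{align}
against a smooth compactly supported function and using the weak $L_x^2 H_y^1$-convergence of $(T_n^k)^{-1} w_n^{k-1}$ to $\tdu^k$ together with weak decoupling of the already orthogonal intermediate profiles, one finds $\tdu^k$ must be in the image of the limit symmetry acting on $\tdu^j$. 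But this contradicts $(T_n^k)^{-1}w_n^{j}\rightharpoonup 0$, which is guaranteed by the construction of $w_n^j$ from $w_n^{j-1}$.

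Finally, the nonlinear decoupling \eqref{cnls conv of h} is the most delicate, because profiles with $\ld_\infty^j=1$ and $\ld_\infty^j=\infty$ must interact correctly, and the argument given in \eqref{argument gn} for single vanishing profiles must be combined with orthogonality. The plan is to expand
\begin{align}
\|\psi_n\|_{L_{x,y}^4}^4 - \sum_{j=1}^k \|T_n^j P_n^j \tdu^j\|_{L_{x,y}^4}^4 - \|w_n^k\|_{L_{x,y}^4}^4
\end{align}
as a finite sum of mixed quartic products, and show that each cross term vanishes in the limit by using \eqref{cnls orthog of pairs}: if the space-time scales diverge then change of variables together with dispersive decay (as in the argument leading to \eqref{argument gn}, applied after a cutoff in $y$ to reduce to $\R^3$) produces a factor tending to zero; if two parameters are comparable but frequencies separate then a Galilean transformation reduces to a weak-convergence argument and the cross term vanishes since $w_n^k\rightharpoonup 0$ weakly in $L_x^2 H_y^1$. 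Taking subsequences and diagonalizing in $k$ closes the proof.
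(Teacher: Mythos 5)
Your iterative extraction scheme, and the handling of (iii) via the quantitative gain \eqref{cnls l2 refined strichartz decomp 1} together with the Pythagorean sum, as well as the contradiction argument for the orthogonality (iv) by testing against the weak limit and using \eqref{cnls composite of g's}-type symmetry composition, all match the paper's proof. The only genuine deviation is your treatment of \eqref{cnls conv of h}: you propose to expand $\|\psi_n\|_{L^4}^4 - \sum_j\|T_n^jP_n^j\tdu^j\|_{L^4}^4 - \|w_n^k\|_{L^4}^4$ into all mixed quartic cross-terms and kill each one with \eqref{cnls orthog of pairs}. The paper instead proves the single-extraction asymptotic decoupling
\begin{align}
\|f_n\|_{L_{x,y}^4}^4=\|\phi_n\|_{L_{x,y}^4}^4+\|f_n-\phi_n\|_{L_{x,y}^4}^4+o_n(1)
\end{align}
(which is \eqref{decomp tas}, the unnamed lemma right after Lemma \ref{refined l2 lemma 1}) once, for the three cases $\ld_\infty=1,\ t_n\to\pm\infty$; $\ld_\infty=1,\ t_n\equiv 0$; $\ld_\infty=\infty$, and then \eqref{cnls conv of h} is obtained by telescoping this over $j=1,\dots,k$. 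That telescope avoids the cross-term bookkeeping and, in particular, avoids having to directly control products between a bounded profile and the remainder $w_n^k$ (for which you would need interpolation against \eqref{cnls to zero wnk} — a step your sketch leaves unaddressed). Your cross-term route is feasible but substantially more work; since \eqref{decomp tas} is already at hand, you should simply invoke it.

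Two smaller points. First, on (i): you normalize by asking $t_n^1/(\ld_n^1)^2\to t_\infty^1\in\{0,\pm\infty\}$, whereas the statement requires $t_n^j\to t_\infty^j\in\{0,\pm\infty\}$; these agree here (since $\ld_n^j\geq\text{const}>0$, or $\ld_n^j\to\infty$, unboundedness of $t_n/\ld_n^2$ forces unboundedness of $t_n$), but you should state the normalization in the form the lemma asserts. Second, in (iii) your conclusion that $\vare_k\to 0$ requires noting that the sequence $A_k=\lim_n\|w_n^{k-1}\|_{L_x^2H_y^1}$ (or $H_{x,y}^1$) is monotone decreasing, hence bounded, so that $A_k^2(\vare_k/A_k)^{24}\to 0$ does in fact force $\vare_k\to 0$; you use this implicitly but it is worth making the monotonicity (which follows from \eqref{orthog L2}, \eqref{orthog gradx}, \eqref{orthog grady}) explicit.
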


\begin{proof}
We construct the linear profiles iteratively and start with $k=0$ and $w_n^0:=\psi_n$. We assume initially that the linear profile decomposition is given and its claimed properties are satisfied for some $k$. Define
\begin{align*}
\vare_{k}:=\lim_{n\to\infty}\|e^{it\Delta_x}w_n^k\|_{L_{t,x,y}^4(\R)}.
\end{align*}
If $\vare_k=0$, then we stop and set $K^*=k$. Otherwise we apply Lemma \ref{refined l2 lemma 1} to $w_n^k$ to obtain the sequence $(\tdu^{k+1},w_n^{k+1},t_n^{k+1},x_n^{k+1},\xi_n^{k+1},\ld_n^{k+1})_{n}.$ We should still need to check that the items (iii) and (iv) are satisfied for $k+1$. That the other items are also satisfied for $k+1$ follows directly from the construction of the linear profile decomposition. If $\vare_k=0$, then item (iii) is automatic; otherwise we have $K^*=\infty$. Using \eqref{cnls l2 refined strichartz decomp 1}, \eqref{orthog L2}, \eqref{orthog gradx} and \eqref{orthog grady} we obtain that
\begin{align}
\sum_{j\in N}A_{j-1}^2\bg(\frac{\vare_{j-1}}{A_{j-1}}\bg)^{24}
\lesssim \sum_{j\in \N}\|\tdu^j\|^2_{L_x^2 H_y^1}
=\sum_{j\in \N}\lim_{n\to\infty}\|T_n^jP_n^j \phi^j\|^2_{L_x^2H_y^1}
\leq \lim_{n\to\infty}\|\psi_n\|^2_{L_x^2 H_y^1}= A_0^2,
\end{align}
where $A_j:=\lim_{n\to\infty}\|w_n^j\|_{L_x^2 H_y^1}$. By \eqref{orthog L2} and \eqref{orthog grady} we know that $(A_j)_j$ is monotone decreasing, thus also bounded. Hence
\begin{align*}
A_j^2\bg(\frac{\vare_j}{A_j}\bg)^{24}\to 0\quad\text{as $j\to\infty$}.
\end{align*}
Combining with the boundedness of $(A_j)_j$ we immediately conclude that $\vare_i\to 0$ and the proof of item (iii) is complete. Finally we show item (iv). Assume that item (iv) does not hold for some $j<k$. By the construction of the profile decomposition we have
\begin{align*}
w_n^{k-1}=w_n^j-\sum_{l=j+1}^{k-1}g_n^l e^{-it_n^l\Delta_x}P_n^l \tdu^l.
\end{align*}
Then by definition of $\tdu^k$ we know that
\begin{align}
\tdu^k&=\wlim_{n\to\infty}e^{-it_n^k\Delta}[(g_n^k)^{-1}w_n^{k-1}]\nonumber\\
&=\wlim_{n\to\infty}e^{-it_n^k\Delta}[(g_n^j)^{-1}w_n^{j}]-\sum_{l=j+1}^{k-1}\wlim_{n\to\infty}e^{-it_n^k\Delta}[(g_n^k)^{-1}P_n^l \tdu^l],
\end{align}
where the weak limits are taken in the $L_x^2H_y^1$-topology. We aim to show $\tdu^k$ is zero, which leads to a contradiction and proves item (iv). For the first summand, we obtain that
\begin{align*}
e^{-it_n^k\Delta_x}[(g_n^k)^{-1}w_n^{j}]=(e^{-it_n^k\Delta_x}(g_n^k)^{-1}g_n^je^{it_n^j\Delta_x})[e^{-it_n^j\Delta_x}(g_n^j)^{-1}w_n^j].
\end{align*}
Direct calculation yields
\begin{align}\label{cnls composite of g's}
&e^{-it_n^k\Delta}(g_n^k)^{-1}g_n^je^{it_n^j\Delta_x}\nonumber\\
=&\,\beta_{n}^{j,k}g_{\ld_n^k(\xi_n^j-\xi_n^k),\frac{x_n^j-x_n^k-2t_n^k(\ld_n^k)^2(\xi_n^j-\xi_n^k)}{\ld_n^k},\frac{\ld_n^j}{\ld_n^k}}
e^{-i\bg(t_n^k\bg(\frac{\ld_n^k}{\ld_n^j}\bg)^2-t_n^j\bg)\Delta_x}.
\end{align}
with $\beta_{n}^{j,k}=e^{i(\xi_n^j-\xi_n^k)x_n^k+t_n^k(\ld_n^k)^2|\xi_n^j-\xi_n^k|^2}$. Therefore, the failure of item (iv) will lead to the strong convergence of the adjoint of $e^{-it_n^k\Delta}(g_n^k)^{-1}g_n^je^{it_n^j\Delta_x}$ on $L_x^2 H_y^1$. By construction of the profile decomposition we have
\begin{align*}
e^{-it_n^j\Delta}(g_n^j)^{-1}w_n^j\rightharpoonup 0\quad\text{in $L_x^2 H_y^1$}
\end{align*}
and we conclude that the first summand weakly converges to zero in $L_x^2 H_y^1$. Now we consider the single terms in the second summand. We can rewrite each single summand to
\begin{align*}
e^{-it_n^k\Delta_x}[(g_n^k)^{-1}P_n^l \tdu^l]=(e^{-it_n^k\Delta_x}(g_n^k)^{-1}g_n^je^{it_n^j\Delta_x})[e^{-it_n^j\Delta_x}(g_n^j)^{-1}P_n^l \tdu^l].
\end{align*}
By the previous arguments it suffices to show that
\begin{align*}
e^{-it_n^j\Delta_x}(g_n^j)^{-1}P_n^l \tdu^l\rightharpoonup 0\quad\text{in $L_x^2 H_y^1$}.
\end{align*}
Due to the construction of the profile decomposition and the inductive hypothesis we know that $\tdu^l\in L_x^2H_y^1$ and item (iv) is satisfied for the pair $(j,l)$. Using the fact that
\begin{align*}
\|P_{\leq(\ld_n^l)^\theta}\tdu^l-\tdu^l\|_{L_x^2 H_y^1}\to 0\quad\text{when $\ld_n^l\to \infty$}
\end{align*}
and density arguments, it suffices to show that
\begin{align*}
I_n:=e^{-it_n^j\Delta}(g_n^j)^{-1}g_n^l e^{it_n^l\Delta_x} \tdu\rightharpoonup 0\quad\text{in $L_x^2 H_y^1$}
\end{align*}
for arbitrary $\tdu\in C_c^\infty(\R^2)\otimes C_{\mathrm{per}}^\infty(\T)$. Using \eqref{cnls composite of g's} we obtain that
\begin{align*}
I_n=\beta_n^{j,l}g_{\ld_n^l(\xi_n^j-\xi_n^l),\frac{x_n^j-x_n^l-2t_n^l(\ld_n^l)^2(\xi_n^j-\xi_n^l)}{\ld_n^l},\frac{\ld_n^j}{\ld_n^l}}
e^{-i\bg(t^l_n\bg(\frac{\ld_n^l}{\ld_n^j}\bg)^2-t_n^j\bg)\Delta_x}\tdu.
\end{align*}
Assume first that $\lim_{n\to\infty}\frac{\ld_n^j}{\ld_n^l}+\frac{\ld_n^l}{\ld_n^j}=\infty$. Then for any $\psi\in C_c^\infty(\R^2)\otimes
C^\infty_{\mathrm{per}}(\T)$ we have
\begin{align*}
|\la I_n,\psi\ra_{L_x^2 H_y^1}|\leq \min\bg\{\bg(\frac{\ld_n^j}{\ld_n^l}\bg)^{-1}\|\mathcal{F}_x\phi\|_{L_x^1 H_y^1}\|\mathcal{F}_x\psi\|_{L_x^\infty H_y^1},\,
\bg(\frac{\ld_n^l}{\ld_n^j}\bg)^{-1}\|\mathcal{F}_x\psi\|_{L_x^1 H_y^1}\|\mathcal{F}_x\phi\|_{L_x^\infty H_y^1}\bg\}\to 0.
\end{align*}
So we may assume that $\lim_{n\to\infty}\frac{\ld_n^j}{\ld_n^l}\in(0,\infty)$. Suppose now $t^l_n\bg(\frac{\ld_n^l}{\ld_n^j}\bg)^2-t_n^j\to\pm\infty$. Then the weak convergence of $I_n$ to zero in $L_x^2 H_y^1$ follows immediately from the dispersive estimate. Hence we may also assume that $\lim_{n\to\infty}t^l_n\bg(\frac{\ld_n^l}{\ld_n^j}\bg)^2-t_n^j\in\R$. Finally, it is left with the options
\begin{align*}
|\ld_n^l(\xi_n^j-\xi_n^l)|\to\infty\quad\text{or}\quad\bg|\frac{x_n^j-x_n^l-2t_n^l(\ld_n^l)^2(\xi_n^j-\xi_n^l)}{\ld_n^l}\bg|\to\infty.
\end{align*}
In the latter case, we utilize the fact that the symmetry group composing by unbounded translations in $L_x^2$ weakly converges to zero as operators in $L_x^2H_y^1$ to deduce the claim; In the former case, we can use the same arguments as the ones for the translation symmetry by considering the Fourier transformation of $I_n$ (w.r.t. $x$) in the frequency space. This completes the desired proof of item (iv).
\end{proof}

\begin{remark}
By interpolation and Strichartz we have for $s\in(\frac12,1)$
\begin{align}
\lim_{k\to K^*}\lim_{n\to\infty}\|e^{it\Delta_{x,y}}w_n\|_{\diag H_y^{s}(\R)}
&=\lim_{k\to K^*}\lim_{n\to\infty}\|e^{it\Delta_{x}}w_n\|_{\diag H_y^{s}(\R)}\nonumber\\
&\lesssim\lim_{k\to K^*}\lim_{n\to\infty}\|e^{it\Delta_{x}}w_n\|^{1-s}_{\diag L_y^2(\R)}\|w_n\|^s_{H_{x,y}^1}\nonumber\\
&\lesssim\lim_{k\to K^*}\lim_{n\to\infty}\|e^{it\Delta_{x}}w_n\|^{1-s}_{L_{t,x,y}^4(\R)}=0.\label{interpolation remainder}
\end{align}
\end{remark}

\subsection{Large scale approximation}
The following lemma shows that large scale nonlinear profiles can be well approximated by the large scale resonant system \eqref{nls}.
\begin{lemma}[Large scale approximation]\label{cnls lem large scale proxy}
Let $(\ld_n)_n\subset(0,\infty)$ such that $\ld_n\to \infty$, $(t_n)_n\subset\R$ such that either $t_n\equiv 0$ or $t_n\to\pm\infty$ and $(\xi_n)_n\subset\R^d$ such that $(\xi_n)_n$ is bounded. Let $\phi\in L_{x}^2 H_y^1$ and define
$$\phi_n:=g_{\xi_n,x_n,\ld_n}e^{it_n\Delta}P_{\leq \ld_n^\theta}\tdu$$
for some $\theta\in(0,1)$. Assume also that $\mM(\phi)<\pi\mM(Q_{2d})$. Then for all sufficiently large $n$ the solution $u_n$ of \eqref{cnls} with $U_n(0)=\phi_n$ is global and scattering in time with
\begin{align}
\limsup_{n\to\infty}\|U_n\|_{\diag H_y^1(\R)}&\leq C(\|\phi\|_{L_x^2 H_y^1}).\label{L2 proxy 1}
\end{align}
Furthermore, for every $\beta>0$ there exists $N_\beta\in\N$ and $\psi_\beta\in C_c^\infty(\R\times \R^2)\otimes C_{\mathrm{per}}^\infty(\T)$ such that
\begin{align}
\bg\|U_n-\ld_n^{-1}e^{-it|\xi_n|^2}e^{i\xi_n\cdot x}\psi_\beta\bg(\frac{t}{\ld_n^2}+t_n,\frac{x-x_n-2t\xi_n}{\ld_n},y\bg)\bg\|_{\diag H_y^{1}(\R)}\leq \beta\label{L2 proxy 3}
\end{align}
for all $n\geq N_\beta$.
\end{lemma}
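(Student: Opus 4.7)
The strategy follows the well-established large-scale approximation scheme of \cite[Lem.~3.11]{CubicR2T1Scattering}, using the scattering theorem for the resonant system (Theorem \ref{thm large scale resonant}) as its main input. By the Galilean invariance of \eqref{cnls} I may first reduce to the case $\xi_n \equiv 0$, reinserting the Galilean factor $e^{-it|\xi_n|^2}e^{i\xi_n\cdot x}$ in \eqref{L2 proxy 3} at the end. Next I pass to the rescaled function
\begin{align*}
V_n(t,x,y):=\ld_n U_n(\ld_n^2 t,\,\ld_n x+x_n,\,y),
\end{align*}
which satisfies $i\pt_t V_n+\Delta_x V_n+\ld_n^{-2}\Delta_y V_n=-|V_n|^2 V_n$ on $\R^2\times\T$, with initial data $V_n(0)=e^{it_n\Delta_x}P_{\leq\ld_n^\theta}\phi$. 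In the regime $\ld_n\to\infty$ the $y$-Laplacian becomes a vanishing perturbation, and expanding $V_n$ into its Fourier series $V_n(t,x,y)=\sum_k v_{n,k}(t,x)e^{iky}$ produces the non-resonant phases $e^{i\ld_n^{-2}(k_1^2-k_2^2+k_3^2-k^2)t}$, so that $V_n$ should be well approximated by $\sum_k u_k(t,x)e^{iky}$ with $u=(u_k)_k$ solving the large-scale resonant system \eqref{nls}.

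To realise this, let $u$ be the solution of \eqref{nls} whose Cauchy data (or linear scattering asymptote, when $t_n\to\pm\infty$) is given by the Fourier coefficients of $\phi$. Up to the Fourier normalisation convention the hypothesis $\mM(\phi)<\pi\mM(Q_{2d})$ translates exactly into $M_0(u)<\tfrac12\mM(Q_{2d})$, and Theorem \ref{thm large scale resonant} supplies a global scattering solution with $\|u\|_{\diag\ell^2(\R)}\leq C(\|\phi\|_{L_x^2H_y^1})$ and the associated Strichartz norms bounded by the same quantity. I approximate $u$ by truncating its Fourier support to finitely many modes and replacing each component by a smooth compactly supported approximant; this produces $\psi_\beta\in C_c^\infty(\R\times\R^2)\otimes C_{\mathrm{per}}^\infty(\T)$ with $\|\sum_k u_k(t,x)e^{iky}-\psi_\beta(t+t_n,x,y)\|_{\diag H_y^1(\R)}\leq\beta/2$, say. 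Substituting $\psi_\beta(\cdot+t_n,\cdot)$ into the rescaled equation, the forcing error decomposes into (i) $\ld_n^{-2}\Delta_y\psi_\beta$, which is $O(\ld_n^{-2})$ in every norm by compact support in time; and (ii) the non-resonant part of $|\psi_\beta|^2\psi_\beta$, which is removed via a standard normal-form transformation at the cost of a commutator of size $O(\ld_n^{-2})$ on the compact temporal support of $\psi_\beta$.

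With the approximate solution in hand, I invoke the long time stability Lemma \ref{lem stability cnls}: the initial difference $V_n(0)-\psi_\beta(t_n,\cdot,\cdot)$ is small after composition with the linear flow, using the truncation $P_{\leq\ld_n^\theta}$ and the approximation of $u$ by $\psi_\beta$; the scattering bound \eqref{cond2 cnls} is supplied by the construction of $\psi_\beta$; and the error bound \eqref{small error long cnls} is the content of the previous paragraph. Iterating the stability estimate across a finite partition of $\R$ on which $\|\psi_\beta\|_{\diag H_y^s}$ is controllably small yields $\|V_n-\psi_\beta(\cdot+t_n,\cdot)\|_{\diag H_y^1(\R)}\leq\beta$ for all large $n$, from which both \eqref{L2 proxy 1} and \eqref{L2 proxy 3} follow upon undoing the rescaling (and the Galilean boost). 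The principal technical obstacle is exactly the normal-form step in the vector-valued $\ell^2 H_y^s$ framework, uniform across the potentially very long time intervals on which the resonant solution exhausts its scattering norm; this requires the vector-valued Strichartz estimates of Lemma \ref{strichartz}, the fractional product rules of Lemma \ref{fractional lemma}, and careful bookkeeping of the non-resonant phases, all of which proceed in close analogy with the defocusing treatment in \cite[Lem.~3.11]{CubicR2T1Scattering}.
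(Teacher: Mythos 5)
Your high-level strategy matches what the paper does (the paper's proof is essentially a one-line citation to \cite[Lem.~3.11]{CubicR2T1Scattering}, with the defocusing resonant-system scattering replaced by Theorem~\ref{thm large scale resonant}): Galilean reduction, passage to the rescaled function $V_n$, approximation by the large-scale resonant system, construction of a smooth compactly supported approximant $\psi_\beta$, and closure via the long-time stability Lemma~\ref{lem stability cnls}. You also correctly identify where the mass threshold $\mM(\phi)<\pi\mM(Q_{2d})$ enters, via the Fourier normalisation $(2\pi)^{-1}\mM(\phi)<2^{-1}\mM(Q_{2d})$.

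However, there is a substantive error in a key step. Writing $V_n(\tau,x,y)=\ld_n U_n(\ld_n^2\tau,\ld_n x+x_n,y)$ and using that $U_n$ solves \eqref{cnls}, one finds
\begin{align}
i\pt_\tau V_n+\Delta_x V_n+\ld_n^{2}\Delta_y V_n=-|V_n|^2 V_n,
\end{align}
with the coefficient $\ld_n^{2}$ on $\Delta_y$, not $\ld_n^{-2}$ as you wrote. The $y$-Laplacian is therefore \emph{not} a vanishing perturbation; it becomes a large, rapidly oscillating term. The correct mechanism is to conjugate it away: setting $w_{n,k}=e^{i\ld_n^{2}k^{2}\tau}\mathcal{F}_y V_n(\cdot,\cdot,k)$ removes the linear $y$-term, at which point the nonlinearity acquires the oscillatory phases $e^{\pm i\ld_n^{2}(k_1^2-k_2^2+k_3^2-k^2)\tau}$ (large frequency, not $\ld_n^{-2}$), and it is \emph{these} fast oscillations that make the non-resonant part averagely small via a normal form, gaining $\ld_n^{-2}$ from integration by parts in $\tau$. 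Consequently your item~(i) --- the claim that the forcing error contains a manifestly small term $\ld_n^{-2}\Delta_y\psi_\beta=O(\ld_n^{-2})$ --- is not an actual error contribution: if $\psi_\beta$ did not already carry the correct $y$-phases, the linear error would be $O(\ld_n^{2})$, which is catastrophic. The way you have described the mechanism thus inverts the reason why the resonant system is the right effective model. The rest of your outline (uniform bounds from Theorem~\ref{thm large scale resonant}, smooth approximation, stability iterated over finitely many intervals) is consistent with the reference, but the rescaled equation and the resulting error accounting need to be corrected before the argument closes.
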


\begin{proof}
The proof is almost identical to the one of \cite[Lem. 3.11]{CubicR2T1Scattering}, we only need to replace the large data scattering result \cite[Thm. 1.1]{Yang_Zhao_2018} therein for the defocusing analogue of \eqref{nls} to Theorem \ref{thm large scale resonant} (hence we also impose the mass restriction $\mM(\phi)<\pi\mM(Q_{2d})$). We omit therefore the repeating arguments.
\end{proof}

\begin{remark}
We explain where the prefactor $\pi$ comes from. By our definition of the Fourier series, for $\phi\in H_{x,y}^1$ we have the Fourier inverse formula
\begin{align}
\phi(x,y)=\sum_{k\in \Z}(2\pi)^{-\frac12}e^{iky}\mathcal{F}_y\phi(x,k)
\end{align}
and the Plancherel's isometry formula
\begin{align}
\|\phi\|^2_{L_{x,y}^2}=\|\mathcal{F}_y\phi\|_{\ell^2 L_x^2}^2.
\end{align}
The initial data $V_\phi$ for the large scale proxy is defined by
$$V_\phi:=(e^{iky}(2\pi)^{-\frac12}\mathcal{F}_y\phi(x,k))_{k\in\Z}.$$
In order to apply Theorem \ref{thm large scale resonant}, we then demand
$$ (2\pi)^{-1}\|\phi\|^2_{L_{x,y}^2}=(2\pi)^{-1}\|\mathcal{F}_y\phi\|_{\ell^2 L_x^2}^2<2^{-1}\mM(Q_{2d}).$$
\end{remark}

\subsection{Variational analysis}\label{subsec: r2t1 variational}
We begin with the proof of Proposition \ref{prop gn cnls}.

\begin{proof}[Proof of Proposition \ref{prop gn cnls}]
For a function $u$ we define $m(u):=(2\pi)^{-1}\int_{\T}u(y)\,dy$. Then
$$\|u\|_{L_{x,y}^4}\leq \|m(u)\|_{L_{x,y}^4}+\|u-m(u)\|_{L_{x,y}^4}.$$
We will show that $\|m(u)\|_{L_{x,y}^4}$ and $\|u-m(u)\|_{L_{x,y}^4}$ are bounded by the first term and second term of \eqref{r2t1gn} respectively, which will complete the proof. For $\|m(u)\|_{L_{x,y}^4}$, we use \eqref{standard gn ineq} and Jensen to infer
\begin{align}
\|m(u)\|^4_{L_{x,y}^4}&=2\pi\|m(u)\|^4_{L_{x}^4}\nonumber\\
&\leq 2\pi\mathrm{C}_{\mathrm{GN},2d}^{-1}\|m(u)\|_{L_x^2}^2\|\nabla_x m(u)\|_{L_x^2}^2\nonumber\\
&\leq2\pi\mathrm{C}_{\mathrm{GN},2d}^{-1}(2\pi)^{-1}(2\pi)^{-1}\|u\|_{L_{x,y}^2}^2\|\nabla_x u\|_{L_{x,y}^2}^2
=(\pi\mM(Q_{2d}))^{-1}\|u\|_{L_{x,y}^2}^2\|\nabla_x u\|_{L_{x,y}^2}^2.
\end{align}
To estimate the second term, we recall the Sobolev inequality on torus for functions with zero mean (see for instance \cite{sobolev_torus}): for $s>0$ and $1<p<q<\infty$ with $s/d=1/p-1/q$ we have
\begin{align}
\|u\|_{L^q(\T^d)}\lesssim \|u\|_{\dot{L}^p_s(\T^d)}.
\end{align}
Therefore, setting $s=\frac12$, $p=2$ and $q=4$ we obtain
\begin{align}
\|u\|^4_{L_y^4}\lesssim \|u\|^4_{\dot{H}_y^{\frac14}}\lesssim \|u\|_{L_y^2}^3\|\nabla_y u\|_{L^2_y}
\end{align}
and the existence of the number $\mathrm{C}_\T$ from Proposition \ref{prop gn cnls} follows. Using H\"older we conclude
\begin{align}
\|u-m(u)\|^4_{L_{x,y}^4}\leq \mathrm{C}_{\T}\|u-m(u)\|_{L_x^6L_y^2}^3\|\nabla_y u\|_{L^2_{x,y}}.
\end{align}
Followed by Minkowski, Gagliardo-Nirenberg (bounding $L_x^6$ by $L_x^2-\dot{H}_x^1$ in 2D), H\"older and Jensen, we see that
\begin{align}
\|u-m(u)\|_{L_x^6L_y^2}^3&\leq \|u-m(u)\|_{L_y^2 L_x^6}^3\nonumber\\
&\leq\widehat{\rm G}_{\mathrm{GN},2d}^{-\frac12}\|\|u-m(u)\|_{L_x^2}^{\frac13}\|\nabla_x(u-m(u))\|_{L_x^2}^{\frac23}\|_{L_y^2}^3\nonumber\\
&\leq\widehat{\rm G}_{\mathrm{GN},2d}^{-\frac12}\|u-m(u)\|_{L_{x,y}^2}\|\nabla_x(u-m(u))\|_{L_{x,y}^2}^{2}\nonumber\\
&\leq \widehat{\rm G}_{\mathrm{GN},2d}^{-\frac12}(1+(2\pi)^{-\frac12})^3\|u\|_{L_{x,y}^2}\|\nabla_x u\|_{L_{x,y}^2}^{2},
\end{align}
which completes the desired proof.
\end{proof}

We next prove a crucial energy trapping result based on Proposition \ref{prop gn cnls}.

\begin{lemma}[Energy trapping]\label{holmer variational}
Let $c_*$ and $\Gamma$ be defined through \eqref{defcdelta} and \eqref{gamma def} respectively. Suppose that $U_0\in H_{x,y}^1$ satisfies \eqref{weaker thres}, \eqref{threshold2} and \eqref{threshold3}. Let $U$ be the solution of \eqref{cnls} with $U(0)=U_0$. Then for all $t\in I_{\max}$, where $I_{\max}$ is the maximal lifespan of $U$, we have
\begin{align}
\|\nabla_y U(t)\|_{L^2_{x,y}}^2<
\Gamma(U_0).\label{threshold4}
\end{align}
Moreover, if there exists some $\beta\in(0,1)$ such that
\begin{align}
\mM(U_0)&\leq(1-\beta)2\pi\mM(Q_{2d}),\label{threshold7}\\
\mH(U_0)&\leq(1-\beta)2^{-1}\Gamma(U_0),\label{threshold8}
\end{align}
then
\begin{align}
\|\nabla_{x,y}U(t)\|^2_{L_{x,y}^2}&\lesssim_\beta \mH(U_0),\label{threshold5}\\
\|\nabla_{x}U(t)\|^2_{L_{x,y}^2}&\lesssim_\beta \frac{1}{2}\|\nabla_x U(t)\|_{L_{x,y}^2}^2-\frac{1}{4}\|U(t)\|_{L_{x,y}^4}^4
=:\mH_*(U(t)).\label{thres no energy}
\end{align}
\end{lemma}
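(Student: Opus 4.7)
The plan is a continuity argument applied to $Y(t):=\|\nabla_y U(t)\|_{L_{x,y}^2}^2$, driven by the Gagliardo-Nirenberg inequality of Proposition~\ref{prop gn cnls} and the conservation laws for \eqref{cnls}. Write $m:=\mM(U_0)=\mM(U(t))$, $X(t):=\|\nabla_x U(t)\|_{L_{x,y}^2}^2$ and $a:=(\pi\mM(Q_{2d}))^{-1/4}$. Raising \eqref{r2t1gn} to the fourth power and combining with conservation of energy gives
\[
2\mH(U_0)\;\geq\;X(t)\Bigl[1-\tfrac12\bigl(a m^{1/4}+c_*\,m^{1/8}Y(t)^{1/8}\bigr)^{4}\Bigr]+Y(t).
\]
The crucial observation is that \eqref{gamma def} is tailored so that $a m^{1/4}+c_*\,m^{1/8}\Gamma(m)^{1/8}=2^{1/4}$; hence whenever $Y(t)\leq\Gamma(U_0)$ the bracket is non-negative, and the display forces $Y(t)\leq 2\mH(U_0)$.

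To establish \eqref{threshold4} I run a standard bootstrap. The map $t\mapsto Y(t)$ is continuous on $I_{\max}$ with $Y(0)<\Gamma(U_0)$ by \eqref{threshold3}. If there were a first time $t_*\in I_{\max}$ (say $t_*>0$; the backward case is symmetric) with $Y(t_*)=\Gamma(U_0)$, the previous paragraph would yield $Y(t_*)\leq 2\mH(U_0)<\Gamma(U_0)$ via \eqref{threshold2}, a contradiction.

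For \eqref{threshold5} and \eqref{thres no energy} under \eqref{threshold7}--\eqref{threshold8}, the bootstrap conclusion sharpens to $Y(t)\leq 2\mH(U_0)\leq(1-\beta)\Gamma(U_0)$, and \eqref{threshold7} gives $a m^{1/4}\leq 2^{1/4}(1-\beta)^{1/4}$. Substituting both bounds into the Gagliardo-Nirenberg argument (using $c_*\,m^{1/8}\Gamma(U_0)^{1/8}=2^{1/4}-am^{1/4}$) produces
\[
\bigl(a m^{1/4}+c_*\,m^{1/8}Y(t)^{1/8}\bigr)^{4}\;\leq\;2\bigl(1-\delta(\beta)\bigr)
\]
for a suitable $\delta(\beta)>0$; the only non-trivial algebraic input is the elementary observation that $x\mapsto x^{1/4}+x^{1/8}-x^{3/8}$ attains its maximum on $(0,1]$ uniquely at $x=1$ with value $1$, which is verified directly from its first and second derivatives (one computes $f'(1)=0$, $f''(1)=-1/16$, and checks that $x=1$ is the only positive critical point by reducing $f'=0$ to the quadratic $3y^2-2y-1=0$ with $y:=x^{1/8}$). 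This yields the coercivity $\mH_*(U(t))\geq\tfrac{\delta(\beta)}{2}X(t)$, which is \eqref{thres no energy}. Combining with $\mH_*(U(t))\leq\mH(U(t))=\mH(U_0)$ and the already-proven $Y(t)\leq 2\mH(U_0)$ then gives \eqref{threshold5}.

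The main conceptual ingredient is spotting that $\Gamma(m)$ as defined in \eqref{gamma def} is precisely the critical value at which the coefficient of $X(t)$ in the energy-GN inequality vanishes; once this is identified, the remainder is a standard bootstrap combined with the short calculus check above.
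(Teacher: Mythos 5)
Your proof is correct and follows essentially the same route as the paper: rewrite \eqref{r2t1gn} as $\mH\geq\tfrac12\|\nabla_y U\|^2_{L^2_{x,y}}+\tfrac14\|\nabla_x U\|^2_{L^2_{x,y}}\,\Xi(\|\nabla_y U\|^2_{L^2_{x,y}})$, identify $\Gamma(U_0)$ as the root of $\Xi$, run a continuity argument for \eqref{threshold4}, and then use \eqref{threshold7}--\eqref{threshold8} to obtain a quantitative gap in $\Xi$ for \eqref{threshold5} and \eqref{thres no energy}. The only cosmetic difference is at the final step, where instead of the calculus check on $x\mapsto x^{1/4}+x^{1/8}-x^{3/8}$ the paper simply uses the factorisation $1-(x^{1/4}+x^{1/8}-x^{3/8})=(1-x^{1/8})(1-x^{1/4})$, which gives the same $\delta(\beta)$ with no derivatives.
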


\begin{proof}
Using \eqref{r2t1gn} we have
\begin{align}
\mH(U_0)&\geq \frac12\|\nabla_y U(t)\|_{L_{x,y}^2}^2
+\frac{1}{4}\|\nabla_x U(t)\|_{L_{x,y}^2}^2\bg(2-\bg((\pi\mM(Q_{2d}))^{-\frac14}\mM(U)^{\frac14}+c_*\mM(U)^{\frac18}\|\nabla_y U(t)\|_{L_{x,y}^2}^{\frac14}\bg)^4\bg)\nonumber\\
&=:\frac12\|\nabla_y U(t)\|_{L_{x,y}^2}^2+\frac{1}{4}\|\nabla_x U(t)\|_{L_{x,y}^2}^2\Xi(\|\nabla_y U(t)\|_{L_{x,y}^2}^2).
\label{threshold6}
\end{align}
Similarly,
\begin{align}
\mH_*(U(t))\geq  \frac{1}{4}\|\nabla_x U(t)\|_{L_{x,y}^2}^2\Xi(\|\nabla_y U(t)\|_{L_{x,y}^2}^2).\label{thres no energy 2}
\end{align}
One easily verifies that $\Gamma(U)$ is a root of $\Xi$. Since $\|\nabla_y U_0\|_{L_{x,y}^2}^2<\Gamma(U)$, if there exists some $t\in I_{\max}$ such that $\|\nabla_y U(t)\|_{L_{x,y}^2}^2\geq\Gamma(U)$, then by continuity there exists some $s\in(0,t]$ such that
$\|\nabla_y U(s)\|_{L_{x,y}^2}^2=\Gamma(U)$. But then we obtain the contradiction
\begin{align}
2^{-1}\Gamma(U)>\mH(U_0)=\mH(U(s))=2^{-1}\Gamma(U),
\end{align}
which implies \eqref{threshold4}. Next, we take \eqref{threshold5} and \eqref{thres no energy}. \eqref{threshold8} implies $\|\nabla_y U(t)\|_{L_{x,y}^2}^2\leq(1-\beta)\Gamma(U)$, which in turn implies
\begin{align}
2^{\frac14}-\bg((\pi\mM(Q_{2d}))^{-\frac14}\mM(U)^{\frac14}+c_*\mM(U)^{\frac18}\|\nabla_y U(t)\|_{L_{x,y}^2}^{\frac14}\bg)
\geq (1-(1-\beta)^{\frac18})(1-(1-\beta)^{\frac14})2^{\frac14}>0,
\end{align}
which combining with \eqref{threshold6} and \eqref{thres no energy 2} implies \eqref{threshold5} and \eqref{thres no energy}.
\end{proof}

At the end of this section, we introduce the MEI-functional $\mD$ which plays a fundamental role for setting up the inductive hypothesis. Such functional was firstly introduced in \cite{killip_visan_soliton} and is quite useful for building up a multi-directional inductive hypothesis scheme. Define the domain $\Omega\subset \R^2$ by
\begin{align}
\Omega&:=\bg((-\infty,0]\times \R\bg)\cup\bg\{(c,h)\in\R^2:c\in(0,\pi\mM(Q_{2d})),h\in(-\infty,2^{-1}\Gamma(c))\bg\}.
\end{align}
Then we define the MEI-functional $\mD:\R^2\to [0,\infty]$ by
\begin{align}\label{cnls MEI functional}
\mD(c,h)=\left\{
             \begin{array}{ll}
             h+c(\pi\mM(Q_{2d})-c)^{-1}+h(2^{-1}\Gamma(c)-h)^{-1},&\text{if $(c,h)\in \Omega$},\\
             \infty,&\text{otherwise}.
             \end{array}
\right.
\end{align}
For $U\in H_{x,y}^1$, define $\mathcal{D}(U):=\mathcal{D}(\mM(U),\mH(U))$. Also define the quantity $\mK(U):=\Gamma(U)-\|\nabla_y U\|^2_{L_{x,y}^2}$ and the set $\mA$ by
\begin{align}
\mA&:=\{U\in H_{x,y}^1:0<\mM(U)<\pi\mM(Q_{2d}),\,\mH(U)<2^{-1}\Gamma(U),\,\mK(U)>0\}.
\end{align}
By conservation of mass and energy we know that if $U$ is a solution of \eqref{cnls}, then $\mD(U(t))$ is a conserved quantity, thus in the following we simply write $\mD(U)=\mD(U(t))$. Moreover, by Lemma \ref{holmer variational} we know that if $U(t)\in \mA$ for some $t$ in the lifespan of $U$, then $U(t)\in\mA$ for all $t$ in the maximal lifespan of $U$. In this case, we simply write $U\in\mA$.

We end this section by giving some useful properties of the MEI-functional.

\begin{lemma}\label{cnls killip visan curve}
Let $U,U_1,U_2$ be solutions of \eqref{cnls}. The following statements hold:
\begin{itemize}
\item[(i)] Let $\mK(U(t))>0$ for some $t$ in the lifespan of $U$. Then $0<\mD(U)<\infty$ if and only if $U\in\mA$.
\item[(ii)] Let $U_1,U_2\in \mA$ satisfy $\mM(U_1)\leq \mM(U_2)$ and $\mH(U_1)\leq \mH(U_2)$, then $\mD(U_1)\leq \mD(U_2)$. If in addition either $\mM(U_1)<\mM(U_2)$ or $\mH(U_1)<\mH(U_2)$, then $\mD(U_1)<\mD(U_2)$.
\item[(iii)] Let $\mD_0\in(0,\infty)$. Then
\begin{align}
\|\nabla_{x,y}U\|^2_{L_{x,y}^2}&\lesssim_{\mD_0}\mH(U),\label{mei var1}\\
\|U\|^2_{H_{x,y}^1}&\lesssim_{\mD_0}\mH(U)+\mM(U)\lesssim_{\mD_0}\mD(U)\label{mei var2}
\end{align}
uniformly for all $U\in \mA$ with $\mD(U)\leq \mD_0$.
\end{itemize}
\end{lemma}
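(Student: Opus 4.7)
The plan is to reduce each item to calculus on the explicit formula \eqref{cnls MEI functional} for $\mD$, supplemented by the quantitative variational estimates from Lemma \ref{holmer variational}. The key preliminary observation is that $\mH(U) > 0$ for every nontrivial $U \in \mA$. To see this I would rewrite \eqref{threshold6} as $\mH(U) \geq \tfrac12\|\nabla_y U\|^2_{L^2_{x,y}} + \tfrac14\|\nabla_x U\|^2_{L^2_{x,y}}\,\Xi(\|\nabla_y U\|^2_{L^2_{x,y}})$ and note that $\Xi$ is continuous, strictly decreasing in its argument, with unique positive root $\Gamma(U)$; since $U\in\mA$ forces $\|\nabla_y U\|^2 < \Gamma(U)$, both summands are nonnegative and cannot simultaneously vanish unless $U \equiv 0$. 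Item (i) then follows immediately: for the ``if'' direction, $U\in\mA$ places $(\mM(U),\mH(U))$ in $\Omega$ (so $\mD(U) < \infty$), while positivity of $\mH(U)$ and of the two fractional summands yield $\mD(U) > 0$. Conversely, $\mD(U)<\infty$ forces $(\mM(U),\mH(U))\in\Omega$, $\mD(U)>0$ rules out the trivial case $\mM(U) = 0$, and $\mK(U) > 0$ is given by hypothesis, so all four defining conditions of $\mA$ hold.

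For item (ii), I would differentiate $\mD$ on $\Omega \cap \{h > 0\}$, which by the preliminary observation contains both $(\mM(U_i),\mH(U_i))$. With $c_1 := \pi\mM(Q_{2d})$ and $h_1(c) := 2^{-1}\Gamma(c)$, direct computation gives
\begin{align}
\partial_h \mD(c,h) = 1 + \frac{h_1(c)}{(h_1(c)-h)^2}, \qquad \partial_c \mD(c,h) = \frac{c_1}{(c_1-c)^2} - \frac{h\,h_1'(c)}{(h_1(c)-h)^2}.
\end{align}
From the explicit formula \eqref{gamma def}, $\Gamma$ is strictly decreasing on $(0,2\pi\mM(Q_{2d}))$, so $h_1'(c) < 0$; combined with $h > 0$, both partials are strictly positive. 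Interpolating along the broken path $(\mM(U_1),\mH(U_1)) \to (\mM(U_2),\mH(U_1)) \to (\mM(U_2),\mH(U_2))$ and using the monotonicity of $h_1$ to check that the path stays in $\Omega$ (the constraint $h < h_1(c)$ is preserved since $h_1$ is larger at smaller $c$), one concludes $\mD(U_1) \leq \mD(U_2)$, with strict inequality whenever either $\mM(U_1) < \mM(U_2)$ or $\mH(U_1) < \mH(U_2)$.

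For item (iii) I exploit that on $\mA$ all three summands defining $\mD(U)$ are nonnegative, hence each is individually bounded by $\mD_0$. Solving $\mH(U)/(h_1(\mM(U))-\mH(U)) \leq \mD_0$ yields $\mH(U) \leq (1-\beta)\,h_1(\mM(U))$ with $\beta := (1+\mD_0)^{-1}$, and similarly $\mM(U) \leq (1-\beta)c_1 \leq (1-\beta/2)\cdot 2c_1$, so both \eqref{threshold7} and \eqref{threshold8} hold with $\beta$ depending only on $\mD_0$. Lemma \ref{holmer variational} then delivers \eqref{threshold5}, which is exactly \eqref{mei var1}. Finally \eqref{mei var2} follows by combining \eqref{mei var1} with $\mH(U)\leq\mD(U)$ and $\mM(U)\leq c_1\mD(U)$ (the latter from $\mD(U)\geq \mM(U)/(c_1-\mM(U))$). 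The only conceptual subtlety in the whole argument is the positivity of $\mH(U)$ on $\mA$, which is precisely what enables both the positivity claim in (i) and the monotonicity in the $c$-direction in (ii); everything else is algebraic manipulation of \eqref{cnls MEI functional}.
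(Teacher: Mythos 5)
Your proof is correct and, modulo the level of detail, follows the same approach the paper takes: reduce (iii) to the bound $\mD_0\geq\mD(U)\geq \mM(U)/(\pi\mM(Q_{2d})-\mM(U))$ and its analogue for the third summand, then invoke the quantitative energy trapping from Lemma \ref{holmer variational}. The paper's proof of (i) and (ii) is a one-liner ("follows immediately from Lemma \ref{holmer variational}" / "follows directly from the definition of $\mD$ and $\Gamma(U)$"); you have spelled out the two facts it silently uses — that $\mH(U)>0$ for $U\in\mA$ (via $\Xi\geq 0$ on $[0,\Gamma]$ and nontriviality of $\nabla_x U$), and that $\mD$ is strictly increasing in both arguments on $\Omega\cap\{h>0\}$ because $h_1=2^{-1}\Gamma$ is strictly decreasing — and these are exactly what make the paper's terse claims true, so your proposal is a faithful expansion rather than a different route.
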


\begin{proof}
(i) follows immediately from Lemma \ref{holmer variational}. (ii) follows directly from the definition of $\mD$ and $\Gamma(U)$. Now we take (iii). Since $U\in\mA$, we know that $\mM(U)\in(0,\pi\mM(Q_{2d}))$ and using Lemma \ref{holmer variational} also $\mH(U)\in [0,2^{-1}\Gamma(U))$. By definition of $\mD_0$ we infer that
\begin{align}\label{mass bound}
\mD_0\geq\mD(U)\geq \frac{\mM(U)}{\pi\mM(Q_{2d})-\mM(U)},
\end{align}
which in turn implies
\begin{align}
\mM(U)\leq \frac{\mD_0}{1+\mD_0}\pi\mM(Q_{2d}).
\end{align}
Similarly, we deduce
\begin{align}
\mH(U)\leq \frac{\mD_0}{1+\mD_0}2^{-1}\Gamma(U)
\end{align}
and \eqref{mei var1} follows from Lemma \ref{holmer variational}. The first inequality of \eqref{mei var2} follows already from \eqref{mei var1}. Next, we obtain that \eqref{mass bound} also implies
\begin{align}\label{mass upper bd}
\mM(U)\leq\frac{\mD(U)\pi\mM(Q_{2d})}{1+\mD(U)}\leq\mD(U)\pi\mM(Q_{2d}).
\end{align}
Together with $\mD(U)\geq\mH(U)$, which is deduced directly form the definition of $\mD$, the desired claim follows.
\end{proof}

\subsection{Existence of a minimal blow-up solution}
Having all the preliminaries we are ready to construct a minimal blow-up solution of \eqref{cnls}. For convenience, we simply fix the number $s$ in Lemma \ref{lemma cnls well posedness} to $s=\frac23$. This number can be replaced by any number from the interval $(\frac12,1)$, but we need to restrict the number to be smaller than one in order to apply \eqref{interpolation remainder}. Define
\begin{align*}
\tau(\mD_0):=\sup\bg\{\|U\|_{\diag H_y^{1}(I_{\max})}:
\text{ $U$ is solution of \eqref{cnls}, }U(0)\in {\mA},\mD(U)\leq \mD_0\bg\}
\end{align*}
and
\begin{align}\label{introductive hypothesis}
\mD^*&:=\sup\{\mD_0>0:\tau(\mD_0)<\infty\}.
\end{align}
By Lemma \ref{lemma cnls well posedness} and Lemma \ref{cnls killip visan curve} we know that $\mD^*>0$ and $\tau(\mD_0)<\infty$ for sufficiently small $\mD_0$. We will therefore assume $\mD^*<\infty$ and aim to derive a contradiction, which will imply $\mD^*=\infty$ and the proof of Theorem \ref{main thm} will be complete in view of Lemma \ref{cnls killip visan curve}. By the inductive hypothesis we can find a sequence $(U_n)_n$ which are solutions of \eqref{cnls} with $(U_n(0))_n\subset {\mA}$ and maximal lifespan $(I_{n})_n$ such that
\begin{gather}
\lim_{n\to\infty}\|U_n\|_{\diag H_y^{1}((\inf I_n,0])}=\lim_{n\to\infty}\|U_n\|_{\diag H_y^{1}([0, \sup I_n))}=\infty,\label{oo1}\\
\lim_{n\to\infty}\mD(U_n)=\mD^*.\label{oo2}
\end{gather}
Up to a subsequence we may also assume that
\begin{align*}
(\mM(U_n),\mH(U_n))\to(\mM_0,\mH_0)\quad\text{as $n\to\infty$}.
\end{align*}
By continuity of $\mD$ and finiteness of $\mD^*$ we know that
\begin{align*}
\mD^*=\mD(\mM_0,\mH_0),\quad
\mM_0\in(0,\pi\mM(Q_{2d})),\quad
\mH_0\in[0,2^{-1}\Gamma(U)).
\end{align*}
From Lemma \ref{cnls killip visan curve} it follows that $(U_n(0))_n$ is a bounded sequence in $H_{x,y}^1$, hence Lemma \ref{linear profile} is applicable for $(U_n(0))_n$. We define the nonlinear profiles as follows: For $\ld_\infty^k=\infty$, we define $U_n^k$ as the solution of \eqref{cnls} with $U_n^k(0)=T_n^kP_n^k\tdu^k$. For $\ld_\infty^k=1$ and $t^k_\infty=0$, we define $U^k$ as the solution of \eqref{cnls} with $U^k(0)=\tdu^k$; For $\ld_\infty^k=1$ and $t^k_\infty\to\pm\infty$, we define $U^k$ as the solution of \eqref{cnls} that scatters forward (backward) to $e^{it\Delta_x}\tdu^k$ in $H_{x,y}^1$. In both cases for $\ld_\infty^k=1$ we define
\begin{align*}
U_n^k:=U^j(t+t_n,x-x_n^k,y).
\end{align*}
Then $U_n^j$ is also a solution of \eqref{cnls}. In all cases we have for each finite $1\leq k \leq K^*$
\begin{align}\label{conv of nonlinear profiles in h1}
\lim_{n\to\infty}\|U_n^k(0)-T_n^kP_n^k\tdu^k\|_{H_{x,y}^1}=0.
\end{align}

In the following, we establish a Palais-Smale type lemma which is essential for the construction of the minimal blow-up solution.

\begin{lemma}[Palais-Smale-condition]\label{Palais Smale}
Let $(U_n)_n$ be a sequence of solutions of \eqref{cnls} with maximal lifespan $I_n$, $U_n\in\mA$ and $\lim_{n\to\infty}\mD(U_n)=\mD^*$. Assume also that there exists a sequence $(t_n)_n\subset\prod_n I_n$ such that
\begin{align}\label{precondition}
\lim_{n\to\infty}\|U_n\|_{\diag H_y^{1}((\inf I_n,\,t_n])}=\lim_{n\to\infty}\|U_n\|_{\diag H_y^{1}([t_n,\,\sup I_n)}=\infty.
\end{align}
Then up to a subsequence, there exists a sequence $(x_n)_n\subset\R^2$ such that $(U_n(t_n, \cdot+x_n,y))_n$ strongly converges in $H_{x,y}^1$.
\end{lemma}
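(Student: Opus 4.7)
The plan is to apply the linear profile decomposition to the bounded $H^1_{x,y}$ sequence $(U_n(t_n))_n$ and combine it with the inductive hypothesis on $\mD^*$ and the stability theory to force exactly one compact profile plus a vanishing remainder. By time translation we may assume $t_n \equiv 0$; by Lemma \ref{cnls killip visan curve}(iii) the sequence $(U_n(0))_n$ is bounded in $H^1_{x,y}$, so Lemma \ref{linear profile} produces profiles $\tdu^j$, parameters $(t_n^j, x_n^j, \xi_n^j, \ld_n^j)$, and remainders $w_n^k$. To each $\tdu^j$ I attach a nonlinear profile $U_n^j$: when $\ld_\infty^j = \infty$ via Lemma \ref{cnls lem large scale proxy}, when $\ld_\infty^j = 1$ and $t_\infty^j = \pm\infty$ via the wave operator (so $U^j$ scatters to $e^{it\Delta_x}\tdu^j$ at $\pm\infty$), and otherwise simply as the local solution with data $\tdu^j$.

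Using \eqref{orthog L2}, \eqref{orthog gradx}, \eqref{orthog grady}, \eqref{cnls conv of h} and \eqref{conv of nonlinear profiles in h1}, together with the monotonicity in Lemma \ref{cnls killip visan curve}(ii), I intend to check (after first using Lemma \ref{holmer variational} and \eqref{mei var2} to verify that each piece lies in $\mA$ for large $n$ and $k$) the key MEI inequality
\begin{align}
\limsup_{n\to\infty}\biggl(\sum_{j=1}^k \mD(U_n^j(0))+\mD(w_n^k)\biggr)\leq \mD^*.
\end{align}
I then claim that exactly one profile is nonzero, and moreover has $\ld_\infty^1=1$, $t_\infty^1=0$, while $\mD(w_n^k)\to 0$. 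Suppose not: in every non-surviving scenario each nonlinear profile $U_n^j$ will have finite global scattering norm. For two-or-more profiles or a nonvanishing remainder, the $\mD$-level of each piece drops strictly below $\mD^*$, so the inductive definition \eqref{introductive hypothesis} applies. For $\ld_\infty=\infty$, Lemma \ref{cnls lem large scale proxy} provides scattering (the mass bound $\mM(\tdu^1)<\pi\mM(Q_{2d})$ is inherited from $U_n\in\mA$). For $\ld_\infty=1$ and $t_\infty=\pm\infty$, the scattering norm of $U_n^j$ on $[0,\infty)$ (resp.\ $(-\infty,0]$) tends to zero since $U_n^j(0)=U^j(t_n^j,\cdot-x_n^j,\cdot)$ and $U^j$ already scatters on that half-line.

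In each such scenario, I will form the superposition $\tilde U_n:=\sum_{j=1}^k U_n^j+e^{it\Delta_{x,y}}w_n^k$ and exploit the pairwise orthogonality \eqref{cnls orthog of pairs} to decouple the cubic nonlinearity, so that the error $(i\pt_t+\Delta_{x,y})\tilde U_n+|\tilde U_n|^2\tilde U_n$ vanishes in $L_{t,x}^{4/3}H_y^s$ after $n\to\infty$ then $k\to K^*$, just as in \cite[Sec.~3]{CubicR2T1Scattering}; the free evolution of the remainder vanishes in $L_{t,x}^4 H_y^s$ via \eqref{interpolation remainder}. Lemma \ref{lem stability cnls} then upgrades $\tilde U_n$ to an approximation of $U_n$ with bounded global scattering norm, contradicting \eqref{precondition}. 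Hence the claim holds: a unique compact profile $\tdu^1\in H^1_{x,y}$ accounts for all of $U_n(0)$ modulo an $H^1_{x,y}$-vanishing remainder (Lemma \ref{cnls killip visan curve}(iii) applied to $w_n^k\in\mA$ turns $\mD(w_n^k)\to 0$ into $\|w_n^k\|_{H^1_{x,y}}\to 0$), yielding $U_n(0,\cdot+x_n^1,\cdot)\to\tdu^1$ strongly in $H^1_{x,y}$. The hard part will be the nonlinear decoupling of the profiles across the three parameter regimes: the pairwise orthogonality \eqref{cnls orthog of pairs} must be combined with Lemma \ref{cnls lem large scale proxy} and Lemma \ref{lem stability cnls}, while the variational trapping of Lemma \ref{holmer variational} is needed throughout to keep each nonlinear profile safely inside $\mA$ so that the MEI bookkeeping remains valid.
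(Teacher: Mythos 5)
Your proposal follows essentially the same route as the paper: time-translate to $t_n\equiv 0$, apply the $H^1_{x,y}$ linear profile decomposition of Lemma~\ref{linear profile}, build nonlinear profiles via Lemma~\ref{cnls lem large scale proxy} when $\ld_\infty^j=\infty$ and via wave operators/local theory when $\ld_\infty^j=1$, use decoupling plus the long-time stability Lemma~\ref{lem stability cnls} to get a contradiction with \eqref{precondition} unless exactly one Euclidean-scale profile survives with $t_\infty^1=0$, and then use the MEI to force $\|w_n^1\|_{H^1_{x,y}}\to 0$.

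One imprecision worth noting: the ``key MEI inequality'' you propose to verify, namely $\limsup_n\bigl(\sum_{j}\mD(U_n^j(0))+\mD(w_n^k)\bigr)\leq\mD^*$, is not what the paper establishes or needs, and it is not obviously true since $\mD$ in \eqref{cnls MEI functional} is not manifestly superadditive in $(\mM,\mH)$ through the energy decompositions \eqref{orthog L2}--\eqref{cnls conv of h}. What is actually used (and what your subsequent sentence ``the $\mD$-level of each piece drops strictly below $\mD^*$'' correctly captures) is the combination of (a) asymptotic non-negativity of each profile's energy, which the paper derives in its Step~1 from $\mK(T_n^jP_n^j\tdu^j)>0$ via \eqref{ko sum} together with Lemma~\ref{holmer variational}, and (b) the strict monotonicity of $\mD$ in the pair $(\mM,\mH)$ from Lemma~\ref{cnls killip visan curve}(ii), so that removing any non-trivial piece strictly decreases the $\mD$-level of what remains. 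If you replace the summed inequality by this monotonicity argument, your outline matches the paper's proof.
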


\begin{proof}
By time translation invariance we may assume that $t_n\equiv 0$. Let $(U_n^j)_{j,n}$ be the nonlinear profiles corresponding to the linear profile decomposition of $(U_n(0))_n$. Define
\begin{align*}
\Psi_n^k:=\sum_{j=1}^k U_n^j+e^{it\Delta_{x,y}}w_n^k.
\end{align*}
We will show that there exists exactly one non-trivial bad linear profile, relying on which the desired claim follows. We divide the remaining proof into three steps.
\subsubsection*{Step 1: Positive energies of the linear profiles}
Since the nonlinearity is focusing, it is \textit{a priori} unclear whether the linear profiles have non-negative energies. We show that this is indeed the case for sufficiently large $n$. Using \eqref{orthog L2} to \eqref{cnls conv of h} we conclude that for any finite $1\leq k\leq K^*$
\begin{align}
\mM_0&=\sum_{j=1}^k \mM(T_n^jP_n^j\tdu^j)+\mM(w_n^k)+o_n(1),\label{mo sum}\\
\mH_0&=\sum_{j=1}^k \mH(T_n^jP_n^j\tdu^j)+\mH(w_n^k)+o_n(1)\label{eo sum}\\
\|\nabla_yU_n(0)\|^2_{L_{x,y}^2}&=
\sum_{j=1}^k \|\nabla_yT_n^jP_n^j\tdu^j\|^2_{L_{x,y}^2}
+\|\nabla_yw_n^k\|_{L_{x,y}^2}+o_n(1)\label{ko sum}.
\end{align}
By \eqref{ko sum} and the fact that $U_n(0)\in\mA$ we know that for given $1\leq k\leq K^*$ we have $\mK(T_n^kP_n^k\tdu^k)>0$ and $\mK(w_n^k)>0$ for sufficiently large $n$. If in this case $\mH(T_n^kP_n^k\tdu^k)$ were negative, then
$$\mH(T_n^kP_n^k\tdu^k)<0\leq 2^{-1}\Gamma(T_n^kP_n^k\tdu^k),$$
which contradicts Lemma \ref{holmer variational} and we conclude that $\mH(T_n^kP_n^k\tdu^k)\geq 0$ for given $1\leq k\leq K^*$ and all $n\geq N_1$ for some large $N_1=N_1(k)$. The same holds for $w_n^k$ and the proof of Step 1 is complete.
\subsubsection*{Step 2: Decoupling of nonlinear profiles}
In this step, we show that the nonlinear profiles are asymptotically decoupled in the sense that
\begin{align}\label{smallness aaa}
\lim_{n\to\infty} \|U_n^i U_n^j\|_{L^2_{t,x}W_y^{1,1}(\R)}=\lim_{n\to\infty} \sum_{s_1,s_2=0}^1\|\pt_y^{s_1}U_n^i \pt_y^{s_2} U_n^j\|_{L^2_{t,x}L_y^{1}(\R)}=0
\end{align}
for any fixed $1\leq i,j\leq K^*$ with $i\neq j$, provided that
$$\limsup_{n\to\infty}\,(\|U_n^i\|_{\diag H_y^{1}(\R)}+\|U_n^j\|_{\diag H_y^{1}(\R)})<\infty.$$
We claim that for any $\beta>0$ there exists some $\psi^i_\beta,\psi_\beta^j\in C_c^\infty(\R\times\R^2)\otimes C_{\mathrm{per}}^\infty(\T)$ such that
\begin{align}
\bg\|U^i_n-(\ld^i_n)^{-1}e^{-it|\xi^i_n|^2}e^{i\xi^i_n\cdot x}\psi^i_\beta\bg(\frac{t}{(\ld^i_n)^2}+t^i_n,\frac{x-x^i_n-2t\xi^i_n}{\ld^i_n},y\bg)\bg\|_{\diag H_y^{1}(\R)}\leq \beta,\\
\bg\|U^j_n-(\ld^j_n)^{-1}e^{-it|\xi^j_n|^2}e^{i\xi^j_n\cdot x}\psi^j_\beta\bg(\frac{t}{(\ld^j_n)^2}+t^j_n,\frac{x-x^j_n-2t\xi^j_n}{\ld^j_n},y\bg)\bg\|_{\diag H_y^{1}(\R)}\leq \beta.
\end{align}
Indeed, for $\ld_\infty^i,\ld_\infty^j=\infty$, this follows already from \eqref{L2 proxy 3}, while for $\ld_\infty^i,\ld_\infty^j=1$ we choose some $\psi^i_\beta,\psi^j_\beta\in C_c^\infty(\R\times\R^d)$ such that
\begin{align}
\|U^i-\psi^i_\beta\|_{\diag H_y^{1}(\R)}\leq \beta,\,\|U^j-\psi^j_\beta\|_{\diag H_y^{1}(\R)}\leq \beta.
\end{align}
Define
$$ \Lambda_n (\psi_\beta^i):=(\ld^i_n)^{-1}\psi^i_\beta\bg(\frac{t}{(\ld^i_n)^2}+t^i_n,\frac{x-x^i_n-2t\xi^i_n}{\ld^i_n},y\bg).$$
Using H\"older we infer that
\begin{align*}
\|\pt_y^{s_1} U_n^i \pt_y^{s_2} U_n^j\|_{L_{t,x}^{2}L_y^{1}(\R)}\lesssim \beta+\|\pt_y^{s_1}\Lambda_n (\psi_\beta^i)\pt_y^{s_2}\Lambda_n (\psi_\beta^j)\|_{L_{t,x}^{2}L_y^{1}(\R)}.
\end{align*}
Since $\beta$ can be chosen arbitrarily small, it suffices to show
\begin{align}\label{step2a1}
\lim_{n\to\infty}\|\pt_y^{s_1}\Lambda_n (\psi_\beta^i)\pt_y^{s_2}\Lambda_n (\psi_\beta^j)\|_{L_{t,x}^{2}L_y^1(\R)}=0.
\end{align}
Assume that $\frac{\ld_n^i}{\ld_n^j}+\frac{\ld_n^j}{\ld_n^i}\to \infty$. By symmetry we may w.l.o.g. assume that $\frac{\ld_n^i}{\ld_n^j}\to 0$. Using change of variables we obtain that
\begin{align}
&\,\|\pt_y^{s_1}\Lambda_n (\psi_\beta^i)
\pt_y^{s_2}\Lambda_n (\psi_\beta^j)\|_{L_{t,x}^{2}L_y^1(\R)}\nonumber\\
=&\,\frac{\ld_n^i}{\ld_n^j}\bg\|\pt_y^{s_1}\psi_\beta^i(t,x)\pt_y^{s_2}\psi_\beta^j\bg(
\bg(\frac{\ld_n^i}{\ld_n^j}\bg)^{2}t-\bg(\bg(\frac{\ld_n^i}{\ld_n^j}\bg)^{2}t_n^i-t_n^j,y\bg),
\nonumber\\
&\quad\quad\quad\quad
\bg(\frac{\ld_n^i}{\ld_n^j}\bg)x+2\bg(\frac{\ld_n^i}{\ld_n^j}\bg)\ld_n^i(\xi_n^i-\xi_n^j)t
+\frac{x_n^i-x_n^j-2t_n^i(\ld_n^i)^2(\xi_n^i-\xi_n^j)}{\ld_n^j},y
 \bg)\bg\|_{L_{t,x}^{2}L_y^1(\R)}\label{verylong2}\\
\lesssim&\,\ld_n^i(\ld_n^j)^{-1}\|\psi_\beta^i\|_{L_{t,x}^{2}H_y^{1}(\R)}\|\psi_\beta^j\|_{L_{t,x}^\infty W_y^{1,\infty}(\R)}
\to 0\nonumber
\end{align}
and the claim follows. Suppose therefore $\frac{\ld_n^i}{\ld_n^j}+\frac{\ld_n^j}{\ld_n^i}\to \ld_0\in(0,\infty)$. If $\bg(\frac{\ld_n^i}{\ld_n^j}\bg)^{2}t_n^i-t_n^j\to\pm\infty$, then by \eqref{verylong2} the supports of the integrands become disjoint in the temporal direction. We may therefore further assume that $\bg(\frac{\ld_n^i}{\ld_n^j}\bg)^{2}t_n^i-t_n^j\to t_0\in\R$. If $\bg|\frac{x_n^i-x_n^j-2t_n^i(\ld_n^i)^2(\xi_n^i-\xi_n^j)}{\ld_n^j}\bg|\to\infty$ and $\xi_n^i= \xi_n^j$ for infinitely many $n$, then the supports of the integrands become disjoint in the $x$-spatial direction; If $\bg|\frac{x_n^i-x_n^j-2t_n^i(\ld_n^i)^2(\xi_n^i-\xi_n^j)}{\ld_n^j}\bg|\to\infty$ and $\xi_n^i\neq \xi_n^j$ for infinitely many $n$, then we apply the change of temporal variable $t\mapsto \frac{t}{\ld_n^i|\xi_n^i-\xi_n^j|}$ to see the decoupling of the supports of the integrands in the $x$-spatial direction. Finally, if $\frac{x_n^i-x_n^j-2t_n^i(\ld_n^i)^2(\xi_n^i-\xi_n^j)}{\ld_n^j}\to x_0\in\R^d$, then by \eqref{cnls orthog of pairs} we must have $\ld_n^i|\xi_n^i-\xi_n^j|\to\infty$. Hence for all $t\neq 0$ the integrand converges pointwise to zero. Using the dominated convergence theorem (setting $\|\psi_\beta^j\|_{L_{t,x,y}^\infty(\R)}\psi_\beta^i$ as the majorant) we finally conclude \eqref{step2a1}.

\subsubsection*{Step 3: Existence of at least one bad profile}
First we claim that there exists some $1\leq J\leq K^*$ such that for all $j\geq J+1$ and all sufficiently large $n$, $U_n^j$ is global and
\begin{align}\label{uniform bound of unj}
\sup_{J+1\leq j\leq K^*}\lim_{n\to\infty}\|U_n^j\|_{\diag H_y^{1}(\R)}\lesssim 1.
\end{align}
Indeed, using \eqref{orthog L2} to \eqref{orthog grady} we infer that
\begin{align}\label{small initial data}
\lim_{k\to K^*}\lim_{n\to\infty}\sum_{j=1}^k\|T_n^jP_n^j\tdu^j\|^2_{H_{x,y}^1}<\infty.
\end{align}
Then \eqref{uniform bound of unj} follows from Lemma \ref{lemma cnls well posedness}. In the same manner, by Lemma \ref{lemma cnls well posedness} we infer that
\begin{align}\label{correct1}
\sup_{J+1\leq k\leq K^*}\lim_{n\to\infty} \sum_{j=J+1}^k\|U_n^j\|^2_{\diag H_y^{1}(\R)}
\lesssim 1.
\end{align}
We now claim that there exists some $1\leq J_0\leq J$ such that
\begin{align}
\limsup_{n\to\infty}\|U_n^{J_0}\|_{\diag H_y^{1}(\R)}=\infty.
\end{align}
We argue by contradiction and assume that
\begin{align}\label{uniform bound of unj small}
\limsup_{n\to\infty}\|U_n^j\|_{\diag H_y^{1}(\R)}<\infty\quad\forall\,1\leq j\leq J.
\end{align}
To proceed, we first show that
\begin{align}\label{kkkk uniform bound of unj}
\sup_{J+1\leq k\leq K^*}\lim_{n\to\infty}\bg\|\sum_{j=J+1}^k U_n^j\bg\|_{\diag H_y^{1}(\R)}
\lesssim 1.
\end{align}
Indeed, using triangular inequality, \eqref{smallness aaa} and \eqref{correct1} we immediately obtain
\begin{align}
&\,\sup_{J+1\leq k\leq K^*}\lim_{n\to\infty}\bg\|\sum_{j=J+1}^k \pt_y^{s}U_n^j\bg\|^4_{\diag L_y^2(\R)}\nonumber\\
\leq&\, \sup_{J+1\leq k\leq K^*}\lim_{n\to\infty}\bg(\bg(\sum_{j=J+1}^k
\|U_n^j\|^2_{\diag H_y^{1}(\R)}+\sum_{i,j=J+1,i\neq j}^k\|\pt_y^{s}U_n^i \pt_y^{s} U_n^j\|_{L^2_{t,x}L_y^{1}(\R)}\bg)^2\bg)\lesssim 1
\end{align}
for $s=0,1$. Combining \eqref{kkkk uniform bound of unj} with \eqref{uniform bound of unj small} we deduce that
\begin{align}\label{super uniform}
\sup_{1\leq k\leq K^*}\lim_{n\to\infty}\bg\|\sum_{j=J+1}^k U_n^j\bg\|_{\diag H_y^{1}(\R)}
\lesssim  1.
\end{align}
Therefore, using \eqref{orthog L2} to \eqref{orthog grady}, \eqref{conv of nonlinear profiles in h1} and Strichartz we confirm that the conditions \eqref{cond1 cnls} to \eqref{small error long cnls} are satisfied for sufficiently large $k$ and $n$, where we set $U=U_n$ and $Z=\Psi_n^k$ therein. As long as we can show that \eqref{small error long cnls} is satisfied for $s=\frac23$, we are able to apply Lemma \ref{lem stability cnls} and Lemma \ref{scattering crit} to obtain the contradiction
\begin{align}\label{contradiction 1}
\limsup_{n\to\infty}\|U_n\|_{\diag H_y^{1}(\R)}<\infty.
\end{align}
Direct calculation shows that
\begin{align}
e&=\,i\pt_t\Psi_n^k+\Delta_{x,y}\Psi_n^k+|\Psi_n^k|^{2}\Psi_n^k\nonumber\\
&=\bg(\sum_{j=1}^k (i\pt_tU_n^j+\Delta_{x,y} U_n^j)+|\sum_{j=1}^kU_n^j|^{2}\sum_{j=1}^kU_n^j\bg)
+\bg(|\Psi_n^k|^{2}\Psi_n^k-|\Psi_n^k-e^{it\Delta_{x,y}}w_n^k|^{2}(\Psi_n^k-e^{it\Delta_{x,y}}w_n^k)\bg)\nonumber\\
&=:I_1+I_2.
\end{align}
In the following we show the asymptotic smallness of $I_1$ and $I_2$. Since $U_n^j$ solves \eqref{cnls}, we can rewrite $I_1$ to
\begin{align*}
I_1
=-\bg(\sum_{j=1}^k|U_n^j|^{2}U_n^j-\bg|\sum_{j=1}^kU_n^j\bg|^{2}\sum_{j=1}^kU_n^j\bg)
=O\bg(\sum_{i,j=1,i\neq j}^k|U_n^i|^2|U_n^j|+\sum_{p,i,j=1,i\neq j}^k|U_n^p U_n^i U_n^j|\bg)=:I_{11}+I_{12}.
\end{align*}
We only consider the summand $I_{11}$, the summand $I_{12}$ can be dealt similarly. By H\"older we have
\begin{align}
\||U_n^i|^2|U_n^j|\|_{L_{t,x}^{\frac43}L_y^2}&\lesssim \|U_n^{i}U_n^{j}\|^{\frac12}_{L_{t,x}^2L_y^1}
\|U_n^{i}\|^{\frac32}_{\diag L_y^\infty}\|U_n^{j}\|^{\frac12}_{\diag L_y^\infty}\nonumber\\
&\lesssim \|U_n^{i}U_n^{j}\|^{\frac12}_{L_{t,x}^2L_y^1}
\|U_n^{i}\|^{\frac32}_{\diag H_y^1}\|U_n^{j}\|^{\frac12}_{\diag H_y^1}.
\end{align}
Then \eqref{uniform bound of unj}, \eqref{uniform bound of unj small} and \eqref{smallness aaa} imply
\begin{align}
\lim_{k\to K^*}\lim_{n\to\infty}\| I_{11}\|_{L_{t,x}^{\frac43} L_y^2}=0.
\end{align}
On the other hand,
\begin{align}
\||U_n^i|^2|U_n^j|\|_{L_{t,x}^{\frac43}H_y^1}\lesssim \|U_n^i\|_{\diag H_y^1}^2\|U_n^j\|_{\diag H_y^1}\lesssim1.
\end{align}
Combining with the inequality $\|f\|_{H_y^{\frac23}}\leq\|f\|_{L_y^2}^{\frac13}\|f\|_{H_y^{1}}^{\frac23}$ we infer that
\begin{align}
\lim_{k\to K^*}\lim_{n\to\infty}\| I_{11}\|_{L_{t,x}^{\frac43} H_y^{\frac23}}=0
\end{align}
Next, we prove the asymptotic smallness of $I_2$. Direct calculation shows
\begin{align}
I_2=O\bg(\Psi_n^k(e^{it\Delta_{x,y}}w_n^k)^2+(\Psi_n^k)^2e^{it\Delta_{x,y}}w_n^k+(e^{it\Delta_{x,y}}w_n^k)^3\bg).
\end{align}
But then \eqref{super uniform}, \eqref{interpolation remainder} and Lemma \ref{fractional lemma} immediately yield
\begin{align}
\lim_{k\to K^*}\lim_{n\to\infty}\| I_2\|_{L_{t,x}^{\frac43} H_y^{\frac23}}=0
\end{align}
and Step 2 is complete.
\subsubsection*{Step 3: Reduction to one bad profile and conclusion}
From Step 2 we conclude that there exists some $1\leq J_1\leq K^*$ such that
\begin{align}
\limsup_{n\to\infty}\|U_n^j\|_{\diag H_y^1(\R)}&=\infty\quad \forall \,1\leq j\leq J_1,\label{infinite}\\
\limsup_{n\to\infty}\|U_n^j\|_{\diag H_y^1(\R)}&<\infty\quad \forall \,J_1+1\leq j\leq K^*.
\end{align}
By Lemma \ref{cnls lem large scale proxy} (which is applicable due to \eqref{mo sum}) we deduce that $\ld_\infty^j=1$ for all $1\leq j\leq J_1$. If $J_1>1$, then using \eqref{mo sum}, \eqref{eo sum}, the asymptotic positivity of energies deduced from Step 1 and Lemma \ref{cnls killip visan curve} we know that $\limsup_{n\to\infty}\mD(U_n^1)<\mD^*$, which violates \eqref{infinite} due to the inductive hypothesis. Thus $J_1=1$ and
$$ U_n(0,x)=e^{it_n^1 \Delta_{x,y}}\tdu^1(x-x_n^1)+w_n^1(x).$$
In particular, $\tdu^1\in H_{x,y}^1$. Similarly, we must have $\mM(w_n^1)=o_n(1)$ and $\mH(w_n^1)=o_n(1)$, otherwise we could deduce again the contradiction \eqref{contradiction 1} using Lemma \ref{lem stability cnls}. Combining with Lemma \ref{cnls killip visan curve} we conclude that $\|w_n^1\|_{H_{x,y}^1}=o_n(1)$. Finally, we exclude the cases $t^1_n\to\pm \infty$. We only consider the case $t_n^1\to \infty$, the case $t_n^1 \to -\infty$ can be similarly dealt. Indeed, using Strichartz we obtain that
\begin{align}
\|e^{it\Delta_{x,y}}U_n(0)\|_{\diag H_y^1([0,\infty))}\lesssim
\|e^{it\Delta_{x,y}}\tdu^1\|_{\diag H_y^1([t^1_n,\infty))}+\|w_n^1\|_{H^1}\to 0
\end{align}
and using Lemma \ref{lemma cnls well posedness} we infer the contradiction \eqref{contradiction 1} again. This completes the desired proof.
\end{proof}

\begin{lemma}[Existence of a minimal blow-up solution]\label{category 0 and 1}
Suppose that $\mD^*<\infty$. Then there exists a global solution $U_c$ of \eqref{cnls} such that $\mD(u_c)=\mD^*$ and
\begin{align}
\|U_c\|_{\diag H_y^1((-\infty,0])}=\|u_c\|_{\diag H_y^1([0,\infty))}=\infty.
\end{align}
Moreover, $U_c$ is almost periodic in $H_{x,y}^1$ modulo $\R_x^2$-translations, i.e. the set $\{U(t):t\in\R\}$ is precompact in $H_{x,y}^1$ modulo translations w.r.t. the $x$-variable.
\end{lemma}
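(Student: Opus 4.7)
The plan is to mimic the Kenig-Merle extraction procedure, with the Palais-Smale lemma (Lemma \ref{Palais Smale}) serving as the central compactness tool. First I take the minimizing sequence $(U_n)_n$ provided by the inductive hypothesis \eqref{introductive hypothesis} and satisfying \eqref{oo1}--\eqref{oo2}, and apply Lemma \ref{Palais Smale} at $t_n \equiv 0$. After extracting a subsequence and appropriate translations $(x_n)_n \subset \R^2$, I obtain a strong limit $\Phi_c \in H_{x,y}^1$ of $U_n(0, \cdot+x_n, \cdot)$. Let $U_c$ denote the solution of \eqref{cnls} with $U_c(0) = \Phi_c$; by continuity of $\mM$ and $\mH$ on $H_{x,y}^1$ one has $\mM(U_c) = \mM_0$, $\mH(U_c) = \mH_0$, and hence $\mD(U_c) = \mD^*$. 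The finiteness $\mD^* < \infty$ forces $(\mM_0, \mH_0) \in \Omega$, so combined with continuity of $\mK$ under strong $H_{x,y}^1$-convergence and the fact that $U_n(0, \cdot+x_n, \cdot) \in \mA$ one obtains $\Phi_c \in \mA$ via Lemma \ref{cnls killip visan curve}(i). Lemma \ref{cnls killip visan curve}(iii) then supplies a uniform $H_{x,y}^1$-bound on $U_c(t)$, so $U_c$ is global by the blow-up alternative (cf.\ the proof of Theorem \ref{thm gwp}).

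Next I show that the scattering norm of $U_c$ must blow up on both sides of zero. Arguing by contradiction, suppose $L := \|U_c\|_{\diag H_y^{1}([0, \infty))} < \infty$. Since $U_n(0, \cdot+x_n, \cdot) \to \Phi_c$ strongly in $H_{x,y}^1$, the long-time stability result (Lemma \ref{lem stability cnls}), applied with approximate solution $Z := U_c$ on $[0, \infty)$, ensures that for $n$ sufficiently large the solution with initial datum $U_n(0, \cdot+x_n, \cdot)$ is defined on $[0, \infty)$ and has scattering norm bounded by $C(L, \mD^*)$. By translation invariance of the scattering norm (and the globality of $U_n$ obtained as for $U_c$), this yields $\|U_n\|_{\diag H_y^{1}([0, \infty))} \leq C(L, \mD^*)$ for large $n$, contradicting \eqref{oo1}. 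The backward-in-time case is symmetric.

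For the almost periodicity I apply Lemma \ref{Palais Smale} once more, this time to the constant sequence $\tilde{U}_n \equiv U_c$ and any prescribed time sequence $(\tau_n)_n \subset \R$. The hypotheses hold since $U_c \in \mA$, $\mD(U_c) = \mD^*$, and the two-sided blow-up of the scattering norm of $U_c$ just established holds around every finite $\tau_n$. The lemma then yields translations $(y_n)_n$ such that $U_c(\tau_n, \cdot+y_n, \cdot)$ converges in $H_{x,y}^1$ along a subsequence. Since $(\tau_n)_n$ was arbitrary, the orbit $\{U_c(t) : t \in \R\}$ is precompact in $H_{x,y}^1$ modulo $\R^2_x$-translations, completing the proof.

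The main analytical obstacle has already been absorbed into the Palais-Smale lemma itself (via the linear profile decomposition Lemma \ref{linear profile}, the asymptotic decoupling \eqref{smallness aaa} of nonlinear profiles, and the stability bootstrap). What remains above is standard Kenig-Merle bookkeeping; the only mild point of care is verifying $\Phi_c \in \mA$ rather than on the boundary of the variational region, but this is automatic from the strict inequality $\mH_0 < 2^{-1}\Gamma(\mM_0)$ implicit in $\mD^* < \infty$, together with continuity of $\mK$ under strong $H_{x,y}^1$-convergence.
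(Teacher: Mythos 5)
Your proposal reaches the same conclusion by the same overall scheme as the paper: apply Lemma \ref{Palais Smale} at $t_n\equiv 0$ to extract the strong limit $\Phi_c$, use Lemma \ref{lem stability cnls} together with \eqref{oo1} to force the two-sided blow-up of the scattering norm, and then invoke Lemma \ref{Palais Smale} once more along an arbitrary time sequence $(\tau_n)_n$ to obtain the precompactness modulo $\R^2_x$-translations. The one place where you genuinely diverge is the globalness step. You obtain a uniform in-time $H^1_{x,y}$ bound from the conservation of $\mD$ together with Lemma \ref{cnls killip visan curve}(iii), and then conclude global existence via the energy-subcritical blow-up alternative, exactly as in the proof of Theorem \ref{thm gwp}. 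The paper instead runs a second Palais-Smale extraction at times $s_n\uparrow\sup I_{\max}$, obtains a strong $H^1_{x,y}$-limit of $U_c(s_n)$ modulo translation, and then extends $U_c$ past $\sup I_{\max}$ via the small-data result Lemma \ref{lemma cnls well posedness}. Both are valid; your route is shorter and natural here because the problem is energy-subcritical (indeed this observation is Theorem \ref{thm gwp}), whereas the paper's argument is the one that would survive an energy-critical setting in which a uniform $H^1$ bound no longer forces global existence.

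One small precision worth adding: continuity of $\mK$ under strong $H_{x,y}^1$-convergence of $U_n(0,\cdot+x_n,\cdot)\to\Phi_c$ alone only yields $\mK(\Phi_c)\geq 0$, not the strict positivity you need for membership in $\mA$. To exclude the boundary case $\mK(\Phi_c)=0$ you should invoke the Gagliardo--Nirenberg consequence \eqref{threshold6}: if $\|\nabla_y\Phi_c\|_{L^2_{x,y}}^2=\Gamma(\Phi_c)$ then \eqref{threshold6} forces $\mH(\Phi_c)\geq 2^{-1}\Gamma(\Phi_c)$, contradicting the strict inequality $\mH_0<2^{-1}\Gamma(\mM_0)$ that you correctly read off from $(\mM_0,\mH_0)\in\Omega$. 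You gesture at this in your final remark but the word ``continuity'' alone does not deliver it; the strict energy inequality is doing the work.
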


\begin{proof}
As discussed at the beginning of this section, under the assumption $\mD^*<\infty$ one can find a sequence $(U_n)_n$ of solutions of \eqref{cnls} that satisfies the preconditions of Lemma \ref{Palais Smale}. We apply Lemma \ref{Palais Smale} to infer that $(U_n(0))_n$ (up to modifying time and space translation) is precompact in $H_{x,y}^1$. We denote its strong $H_{x,y}^1$-limit by $\psi$. Let $U_c$ be the solution of \eqref{cnls} with $U_c(0)=\psi$. Then $\mD(U_c(t))=\mD(\psi)=\mD^*$ for all $t$ in the maximal lifespan $I_{\max}$ of $U_c$ (recall that $\mD$ is a conserved quantity).

We first show that $U_c$ is a global solution. We only show that $s_0:=\sup I_{\max}=\infty$, the negative direction can be similarly proved. If this does not hold, then by Lemma \ref{lemma cnls well posedness} there exists a sequence $(s_n)_n\subset \R$ with $s_n\to s_0$ such that
\begin{align*}
\lim_{n\to\infty}\|U_c\|_{\diag H_y^1((-\inf I_{\max},s_n])}=\lim_{n\to\infty}\|u_c\|_{\diag H_y^1([s_n,s_0))}=\infty.
\end{align*}
Define $V_n(t):=u_c(t+s_n)$. Then \eqref{precondition} is satisfied with $t_n\equiv 0$. We then apply Lemma \ref{Palais Smale} to the sequence $(V_n(0))_n$ to conclude that there exists some $\varphi\in H_{x,y}^1$ such that, up to modifying the space translation, $U_c(s_n)$ strongly converges to $\varphi$ in $H_{x,y}^1$. But then using Strichartz we obtain
\begin{align*}
\|e^{it\Delta_{x,y}}U_c(s_n)\|_{\diag H_y^1([0,s_0-s_n))}=\|e^{it\Delta_{x,y}}\varphi\|_{\diag H_y^1([0,s_0-s_n))}+o_n(1)=o_n(1).
\end{align*}
By Lemma \ref{lemma cnls well posedness} we can extend $U_c$ beyond $s_0$, which contradicts the maximality of $s_0$. Now by \eqref{oo1} and Lemma \ref{lem stability cnls} it is necessary that
\begin{align}\label{blow up uc}
\|U_c\|_{\diag H_y^1((-\infty,0])}=\|U_c\|_{\diag H_y^1([0,\infty))}=\infty.
\end{align}

We finally show that the orbit $\{U_c(t):t\in\R\}$ is precompact in $H_{x,y}^1$ modulo ($\R^2_x$)-translations. Let $(\tau_n)_n\subset\R$ be an arbitrary time sequence. Then \eqref{blow up uc} implies
\begin{align*}
\|U_c\|_{\diag H_{y}^1((-\infty,\tau_n])}=\|U_c\|_{\diag H_{y}^1([\tau_n,\infty))}=\infty.
\end{align*}
The claim follows by applying Lemma \ref{Palais Smale} to $(U_c(\tau_n))_n$.
\end{proof}

\subsection{Extinction of the minimal blow-up solution}\label{Extinction of the minimal blow-up solution}
We exclude in this final section the minimal blow-up solution that we deduced from the last section. The following lemma is an immediate consequence of the fact that $U_c$ is almost periodic in $H_{x,y}^1$ and conservation of momentum. The proof is standard, we refer to \cite{non_radial} for details of the proof.
\begin{lemma}\label{holmer}
Let $U_c$ be the minimal blow-up solution given by Lemma \ref{category 0 and 1}. Then there exists some function $x:\R\to\R^2$ such that
\begin{itemize}

\item[(i)] For each $\vare>0$ there exists $R>0$ so that
\begin{align}
\int_{|x+x(t)|\geq R}|\nabla_{x,y} U_c(t)|^2+|U_c(t)|^2+|U_c(t)|^4\,dxdy\leq\vare\quad\forall\,t\in\R.
\end{align}


\item[(ii)] The center function $x(t)$ obeys the decay condition $x(t)=o(t)$ as $|t|\to\infty$.
\end{itemize}
\end{lemma}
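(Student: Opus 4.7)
The plan is to follow the standard scheme developed in \cite{non_radial}, combining the $H^1_{x,y}$-almost periodicity modulo $\R^2_x$-translation deduced in Lemma \ref{category 0 and 1} with Galilean invariance and conservation of momentum. Part (i) follows directly from precompactness; part (ii) requires a truncated first-moment (virial) identity.

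For part (i), denote by $x:\R\to\R^2$ the centering function associated to the precompact orbit $\{U_c(t):t\in\R\}\subset H^1_{x,y}$ modulo translations. Since precompact families in a Hilbert space are equi-small at spatial infinity, for any $\vare>0$ there exists $R=R(\vare)$ so that
\begin{align}
\sup_{t\in\R}\int_{|x+x(t)|\geq R}\bg(|\nabla_{x,y}U_c(t)|^2+|U_c(t)|^2\bg)\,dxdy\leq\vare.
\end{align}
The $L^4$-tail bound then follows from the Sobolev embedding $H^1(\R^2\times\T)\hookrightarrow L^4(\R^2\times\T)$ applied to a smooth truncation $\chi_{|x+x(t)|\geq R/2}U_c$, whose full $H^1_{x,y}$-norm is uniformly bounded in $t$ by Lemma \ref{holmer variational}.

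For part (ii), the strategy is to first invoke Galilean invariance to reduce to $\mathcal{P}(U_c)=0$; this transformation preserves mass, energy, the thresholds \eqref{threshold1}-\eqref{threshold3}, and the almost periodic structure. Argue then by contradiction: suppose there exist $\eta>0$ and a sequence $|t_n|\to\infty$ with $|x(t_n)|\geq\eta|t_n|$. For a bump $\phi\in C_c^\infty(\R^2;[0,1])$ equal to $1$ on $B_1(0)$ and supported in $B_2(0)$, introduce the truncated first moment
\begin{align}
X_R(t):=\int_{\R^2\times\T}\phi(R^{-1}(x+x(t)))\,x\,|U_c(t,x,y)|^2\,dxdy.
\end{align}
By part (i), $X_R(t)=-x(t)\mM(U_c)+O(R\,\mM(U_c))+o_R(1)$ uniformly in $t$. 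On the other hand, a direct computation using the NLS equation gives $\frac{d}{dt}X_R(t)=2\mathcal{P}(U_c)+o_R(1)=o_R(1)$, since the non-momentum contributions are all localized in the tail region $|x+x(t)|\gtrsim R$ controlled by part (i). Integrating over $[0,t_n]$, dividing by $t_n$, and letting $R$ be large forces $|x(t_n)|/t_n\to 0$, which is the desired contradiction.

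The principal technical obstacle is the regularity of $t\mapsto x(t)$: the center is only continuous in general, so $\dot x(t)$ does not exist classically and $\frac{d}{dt}X_R(t)$ must be interpreted carefully. The standard remedy (see \cite{non_radial,KillipVisanKleinGordon}) is to mollify $x(t)$ in time, run the virial identity for the mollified centering, and pass to the limit using the uniform compactness of the orbit; the remaining book-keeping is routine and is omitted here.
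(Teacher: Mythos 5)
The paper provides no proof here, merely citing \cite{non_radial}; your proposal correctly reconstructs the standard argument from that reference (tail equi-smallness from almost periodicity for (i); Galilean reduction plus a truncated first-moment identity for (ii), with the mollification of $t\mapsto x(t)$ rightly flagged as the only technical care needed). The one slip is the claim that the Galilean boost \emph{preserves} the energy and the thresholds: a boost $U\mapsto e^{i\xi\cdot x}U$ changes $\|\nabla_x U\|_{L^2}^2$ by $2\xi\cdot\mathcal{P}(U)+|\xi|^2\mM(U)$, so the choice $\xi=-\mathcal{P}(U_c)/\mM(U_c)$ can only \emph{decrease} the energy (while leaving $\mM$ and $\|\nabla_y U\|_{L^2}$ unchanged), and it is the minimality of $\mD(U_c)=\mD^*$ that then forces $\mathcal{P}(U_c)=0$ --- which is precisely the momentum-zeroing step in \cite{non_radial}, so the conclusion stands.
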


\begin{proof}[Proof of Theorem \ref{main thm}]
We will show the contradiction that the minimal blow-up solution $U_c$ given by Lemma \ref{category 0 and 1} is equal to zero, which will finally imply Theorem \ref{main thm}. First we notice that since $U$ is a non-zero almost-periodic solution in $H_{x,y}^1$, we have
\begin{align}
\inf_{t\in\R}\|\nabla_{x}U_c(t)\|_{L_{x,y}^2}^2=:\rho>0.
\end{align}
Next, let $\chi:\R^2\to\R$ be a smooth radial cut-off function satisfying
\begin{align*}
\chi=\left\{
             \begin{array}{ll}
             |x|^2,&\text{if $|x|\leq 1$},\\
             0,&\text{if $|x|\geq 2$}.
             \end{array}
\right.
\end{align*}
Then for $R>0$, we define the local virial action $z_R(t)$ by
\begin{align*}
z_{R}(t):=\int R^2\chi\bg(\frac{x}{R}\bg)|U_c(t,x)|^2\,dxdy.
\end{align*}
Direct calculation yields
\begin{align}
\pt_t z_R(t)=&\,2\,\mathrm{Im}\int R\nabla_x \chi\bg(\frac{x}{R}\bg)\cdot\nabla_x U_c(t)\overline{U}_c(t)\,dxdy,\label{final4}\\
\pt_{tt} z_R(t)=&\,4\int \pt^2_{x_j x_k}\chi\bg(\frac{x}{R}\bg)\pt_{x_j} U_c\pt_{x_k}\overline{U}_c\,dxdy-\frac{1}{R^2}\int\Delta_x^2\chi\bg(\frac{x}{R}\bg)|U_c|^2\,dxdy
-\int\Delta_x\chi\bg(\frac{x}{R}\bg)|U_c|^4\,dxdy.
\end{align}
We then obtain that
\begin{align}
\pt_{tt} z_R(t)=16\mH_*(U(t))+A_R(U_c(t)),
\end{align}
where $\mH_*$ is defined by \eqref{thres no energy} and
\begin{align*}
A_R(U_c(t))=&\,4\int\bg(\pt^2_{x_j}\chi\bg(\frac{x}{R}\bg)-2\bg)|\pt_{x_j} U_c|^2\,dxdy+4\sum_{j\neq k}\int_{R\leq|x|\leq 2R}\pt_{x_j}\pt_{x_k}\chi\bg(\frac{x}{R}\bg)\pt_{x_j} U\pt_{x_k}\overline{U}_c\,dxdy\nonumber\\
&\,-\frac{1}{R^2}\int\Delta_x^2\chi\bg(\frac{x}{R}\bg)|U_c|^2\,dxdy
-\int\bg(\Delta_x\chi\bg(\frac{x}{R}\bg)-4\bg)|U_c|^4\,dxdy.
\end{align*}
We have the rough estimate
\begin{align*}
|A_R(u(t))|\leq C_1\int_{|x|\geq R}|\nabla_x U_c(t)|^2+\frac{1}{R^2}|U_c(t)|^2+|U_c(t)|^4\,dxdy
\end{align*}
for some $C_1>0$. By Lemma \ref{cnls killip visan curve} we know that there exists some $\beta>0$ such that \eqref{threshold7} and \eqref{threshold8} hold for this $\beta$. By \eqref{thres no energy} we deduce that there exists some $c_\beta>0$ such that
\begin{align}\label{small extinction ff}
16\mH_*(U(t))\geq c_\beta\|\nabla_{x}U_c(t)\|_{L_{x,y}^2}^2\geq c_\beta\rho=:2\eta_1>0.
\end{align}
From Lemma \ref{holmer} it follows that there exists some $R_0\geq 1$ such that
\begin{align*}
\int_{|x+x(t)|\geq R_0}|\nabla_{x,y} U_c(t)|^2+|U_c(t)|^2+|U_c(t)|^4\,dxdy\leq \frac{\eta_1}{C_1}.
\end{align*}
Thus for any $R\geq R_0+\sup_{t\in[t_0,t_1]}|x(t)|$ with some to be determined $t_0,t_1\in[0,\infty)$, we have
\begin{align}\label{final3}
\pt_{tt} z_R(t)\geq \eta_1
\end{align}
for all $t\in[t_0,t_1]$. By Lemma \ref{holmer} we know that for any $\eta_2>0$ there exists some $t_0\gg 1$ such that $|x(t)|\leq\eta_2 t$ for all $t\geq t_0$. Now set $R=R_0+\eta_2 t_1$. Integrating \eqref{final3} over $[t_0,t_1]$ yields
\begin{align}\label{12}
\pt_t z_R(t_1)-\pt_t z_R(t_0)\geq \eta_1 (t_1-t_0).
\end{align}
Using \eqref{final4}, Cauchy-Schwarz and Lemma \ref{cnls killip visan curve} we have
\begin{align}\label{13}
|\pt_t z_{R}(t)|\leq C_2 \mD^*R= C_2 \mD^*(R_0+\eta_2 t_1)
\end{align}
for some $C_2=C_2(\mD^*)>0$. \eqref{12} and \eqref{13} give us
\begin{align*}
2C_2 \mD^*(R_0+\eta_2 t_1)\geq\eta_1 (t_1-t_0).
\end{align*}
Setting $\eta_2=\frac{\eta_1}{4C_2\mD^*}$, dividing both sides by $t_1$ and then sending $t_1$ to infinity we obtain $\frac{1}{2}\eta_1\geq\eta_1$, which implies $\eta_1=0$, a contradiction. This completes the proof.
\end{proof}

\subsubsection*{Acknowledgments}
The author acknowledges the funding by Deutsche Forschungsgemeinschaft (DFG) through the Priority Programme SPP-1886 (No. NE 21382-1).

\addcontentsline{toc}{section}{References}
\bibliographystyle{hacm}

\end{document}